\newcommand{\prox}{{\text{prox}}}
\newcommand{\sgn}{\text{sgn}}
\newcommand{\lbr}{\left(}
\newcommand{\rbr}{\right)}
\newcommand*\samethanks[1][\value{footnote}]{\footnotemark[#1]}
\title{Local and Global Convergence of an Inertial Version of Forward-Backward Splitting
\thanks{The proofs of Thms. 4.1, 5.1, 5.2, and 5.6 of this manuscript contain several errors.
These errors have been fixed in a revised and rewritten manuscript entitled ``Local and Global Convergence of a General Inertial Proximal Splitting Scheme" arxiv id. 1602.02726. We recommend reading this updated manuscript. 
}} %\url{https://arxiv.org/abs/1602.02726}.}}
\author{Patrick R. Johnstone\thanks{Beckman Institute, University of Illinois, 405 N. Mathews Ave., Urbana, IL, 61801, USA (contact: prjohns2@illinois.edu)} \and Pierre Moulin\samethanks}
\begin{document}
\maketitle
\begin{abstract}
A problem of great interest in optimization is to minimize a sum of two closed, proper, and convex functions where one is smooth and the other has a computationally inexpensive proximal operator. In this paper we analyze a family of Inertial Forward-Backward Splitting (I-FBS) algorithms for solving this problem. We first apply a global Lyapunov analysis to I-FBS and prove weak convergence of the iterates to a minimizer in a real Hilbert space. We then show that the algorithms achieve local linear convergence for ``sparse optimization", which is the important special case where the nonsmooth term is the $\ell_1$-norm. This result holds under either a restricted strong convexity  or a strict complimentary condition and we do not require the objective to be strictly convex. For certain parameter choices we determine an upper bound on the number of iterations until the iterates are confined on a manifold containing the solution set and linear convergence holds.

The local linear convergence result for sparse optimization holds for the Fast Iterative Shrinkage and Soft Thresholding Algorithm (FISTA) due to Beck and Teboulle which is a particular parameter choice for I-FBS. 
In spite of its optimal global objective function convergence rate, we show that FISTA is not optimal for sparse optimization with respect to the local convergence rate. We determine the locally optimal parameter choice for the I-FBS family. Finally we propose a method which inherits the excellent global rate of FISTA but also has excellent local rate.
\end{abstract}

\begin{keywords}
proximal gradient methods, forward-backward splitting, inertial methods, $\ell_1$-regularization, local linear convergence
\end{keywords}
\begin{AMS}
65K05, 65K15, 90C06, 90C25
\end{AMS}
\pagestyle{myheadings}
\thispagestyle{plain}
\markboth{PATRICK JOHNSTONE AND PIERRE MOULIN}{Global and Local Convergence Analysis of Inertial Forward-Backward Splitting}
\section{Introduction}

We are concerned with the following important problem:
\begin{eqnarray}
\underset{x\in\mathcal{H}}{\hbox{minimize}}\
 F(x)= f(x)+g(x),
\label{prob:1}
\end{eqnarray}
where $\mathcal{H}$ is a Hilbert space over the real numbers, the functions $f:\mathcal{H}\to\mathbb{R}\cup\{+\infty\}$ and $g:\mathcal{H}\to\mathbb{R}\cup\{+\infty\}$ are proper, convex and closed, and in addition $f$ is G\^{a}teaux differentiable, and has a Lipschitz continuous gradient. Problems of this form have come under considerable attention in recent years in applications such as machine learning \cite{koh2007interior,hastie2009elements}, compressed sensing \cite{decodebylinear05, donoho2006compressed} and image processing \cite{rudin1992nonlinear, chambolle2004algorithm} among many other examples. Of particular interest in this paper will be the special case which we will call sparse optimization (SO).
\begin{eqnarray*}
	(\hbox{Problem SO})\quad
	\underset{x\in\mathbb{R}^n}{\hbox{minimize}}\
	F(x)=
	f(x) +\rho\|x\|_1,
	\label{prob:sparse}
\end{eqnarray*} 
where $\rho\geq 0$, and $\|x\|_1=\sum_{i=1}^n |x_i|$. We refer to this problem as ``sparse optimization" because the $\ell_1$-norm encourages sparse solutions. When $f(x) = \frac{1}{2}\|b-Ax\|^2$ with $A\in\mathbb{R}^{m\times n}$ and $b\in\mathbb{R}^m$, Problem SO is often referred to as sparse least squares (Problem $\ell_1$-LS), basis pursuit denoising or LASSO. This problem is of central importance in compressed sensing and also has applications in machine learning \cite{efron2004least} and image processing \cite{vonesch2009fast}. Other important instances of Problem (\ref{prob:1}) include least squares with a total-variation \cite{osher2005iterative} or nuclear-norm \cite{CandesMC} regularizer, and minimization constrained to a closed and convex set.

\subsection{Background}
%The structure of Problem (\ref{prob:1}) can be exploited by ``splitting" $F$ into a sum of two functions with additional assumptions on each of the constituent parts. This allows for the development of more efficient methods than ``black-box" convex optimization procedures \cite{lin2014adaptive}. 
In this paper we focus on \emph{first-order splitting methods} for solving Problem (\ref{prob:1}). These methods use evaluations of $F$, gradients of the smooth part $f$, and evaluations of the \emph{proximal operator} of the nonsmooth part $g$. In particular we focus on the \emph{forward-backward splitting algorithm} (FBS), which is a classical first-order splitting approach to solving Problem (\ref{prob:1}) \cite{lions1979splitting,passty1979ergodic}. In fact FBS was developed for the more general monotone inclusion problem which includes Problem (\ref{prob:1}) as a special case. FBS involves a ``forward" step, which is an explicit gradient step with respect to the differentiable component $f$ and a ``backward" step, which is an implicit, proximal step with respect to $g$. For many popular instances of $g$ this proximal step is computationally inexpensive \cite{prox_signalProcessing}. The convergence rate of the objective function to the infimum is $O(1/k)$, which is better than the $O(1/\sqrt{k})$ rate achieved by the ``black-box" subgradient method, and is the same as if the possibly nonsmooth component were not present. Weak convergence of the iterates is also guaranteed and linear convergence occurs on strongly convex problems \cite{PolyakIntro}.  
%FBS outperforms the subgradient method because it \emph{splits} the objective function into a sum of two components, thus exploiting the structure of the problem to a higher degree. 
%As a first-order method, FBS is often suitable for large scale problems where second-order methods are not \cite{parikh2013proximal}.
 FBS is also commonly referred to as the proximal gradient method \cite{parikh2013proximal} and for the special case of Problem SO, it is known as the iterative shrinkage and soft thresholding algorithm (ISTA) owing to the form of the proximal step w.r.t. the $\ell_1$-norm \cite{Beck:2009,Hale:2008,bredies2008linear}. Other first-order splitting methods include ADMM \cite{eckstein2012augmented}, linearized and preconditioned ADMM \cite{ouyang2015accelerated}, primal-dual methods \cite{condat2013primal}, Bregman iterations \cite{esser2009applications} and generalized FBS \cite{raguet2013generalized}. These methods can deal with more complicated situations such as when $g$ is composed with a bounded linear operator or when the sum of $m>1$ proximable\footnote{Possessing a simple proximal operator.} functions is present. 

%\emph{Inertial methods} are multi-step iterative methods for solving minimization problems (or, more generally, maximal monotone inclusion problems and fixed point problems) that are inspired by discretizing a differential equation whose trajectory is known to converge to a solution (e.g. see \cite{abbas2014dynamical,alvarez2000minimizing, attouch2014dynamical, su2014differential,Johnstone:Thesis:2014}). A very early example is due to Polyak \cite{polyak1964some}, who introduced the \emph{heavy-ball with friction method} (HBF) for minimizing a strongly convex function  (see also \cite{PolyakIntro} p. 65). Polyak's method for minimizing $p$ is
%\begin{eqnarray*}
%x^{k+1}=x^k-\lambda\nabla p(x^k)+\alpha (x^k-x^{k-1}),\quad k\geq 1, x^0,x^1\in\mathcal{H},
%\end{eqnarray*}
%with prescriptions on the ``momentum" parameter $\alpha$ and the ``step-size" $\lambda$. It 
%is equivalent to the classical gradient descent method for minimizing $p$ with the additional momentum term $\alpha(x^k-x^{k-1})$. (Throughout this paper, we will use the words inertia and momentum interchangeably). Remarkably, for strongly convex quadratics, this extra term, with negligible computational cost, can significantly improve the convergence performance. While both gradient descent and HBF obtain linear convergence to the minimizer for strongly convex quadratics, HBF has a far better rate\footnote{Recall that $x^k$ converges to $x^*$ linearly if $\lim_{k\to\infty}\sup\frac{\|x^{k+1} - x^*\|}{\|x^k-x^*\|}=q\in(0,1)$ and $q$ is referred to as the rate of convergence.} \cite{PolyakIntro, polyak1964some}.

Nesterov developed several methods for minimizing a convex function with Lipschitz gradient (\cite{nesterov1983method}, \cite{nesterov2004introductory} chapter 2). 
%In its simplest form, the iterates of the method can be expressed as
%\begin{eqnarray*}
%x^{k+1}=x^k-\frac{1}{L}\nabla p\lbr x^k+\alpha_k (x^k-x^{k-1})\rbr+\alpha_k (x^k-x^{k-1}),
%\end{eqnarray*}
 %for $k\geq 1, x^0,x^1\in\mathcal{H},\alpha_k\in(0,1)$, and with a certain update rule for the so-called ``momentum" parameter $\alpha_k$. 
 %(In \cite{nesterov2004introductory}, Nesterov uses $\beta_k$ to denote the momentum parameter and $\alpha_k$ for another variable. In this paper we mostly follow the notation of Alvarez \cite{Alvarez2000} who used $\alpha_k$ for the momentum parameter in the inertial proximal algorithm). 
 %Notice that the method differs only from HBF in that the gradient is computed at the extrapolated point rather than the previous iterate. Nevertheless this minor difference leads to improved theoretical guarantees.
%His accelerated methods differ in subtle ways to the heavy-ball with friction method giving rise to different performance guarantees. 
These methods obtain the best objective function convergence rate possible by any first order method. Specifically, they guarantee a convergence rate of $O(1/k^2)$ for the objective function, which
is optimal in the worst case sense for convex functions with Lipschitz gradient. Note that this improves the $O(1/k)$ rate achieved by classical gradient descent.%In this same sense Nesterov's method achieves the worst-case optimal rate when $p$ is, in addition, strongly convex. 

In \cite{Beck:2009}, Beck and Teboulle extended Nesterov's method of \cite{nesterov2004introductory} to Problem (\ref{prob:1}), allowing for the presence of the possibly nonsmooth function $g$. Their method, FISTA, combines Nesterov's inertial update into an FBS framework using the same sequence of ``momentum" parameters. FISTA corresponds to a particular parameter choice for the following suite of algorithms, which we will call Inertial Forward-Backward Splitting (I-FBS). 

\begin{eqnarray*}
\text{(I-FBS)}:\forall k\in\mathbb{N},\quad
\left|
\begin{array}{ll}
y^{k+1} =& x^k + \alpha_k(x^k - x^{k-1})\\
x^{k+1} =&\prox_{\lambda_k g}\left(y^{k+1}-\lambda_k\nabla f(y^{k+1})\right) 
\end{array}
\right.
\end{eqnarray*} 
with $x^{0},x^{1}\in\mathcal{H}$ chosen arbitrarily (typically $x^0=x^1$). The sequences $\{\alpha_k\}_{k\in\mathbb{N}}$ and $\{\lambda_k\}_{k\in\mathbb{N}}$ are a subset of $\mathbb{R}_{\geq 0}$. The proximal operator $\prox_{g}:\mathcal{H}\to\mathcal{H}$ will be properly defined in Section \ref{sec:prox}. Beck and Teboulle showed that for a specific choice of $\{\alpha_k\}_{k\in\mathbb{N}}$ and $\{\lambda_k\}_{k\in\mathbb{N}}$, I-FBS obtains the optimal $O(1/k^2)$ rate in terms of the objective function,
however they did not prove convergence of the iterates $\{x^k\}_{k\in\mathbb{N}}$ to a minimizer which is also unknown for Nesterov's method. Tseng \cite{tseng2008accelerated} showed that other choices also achieve $O(1/k^2)$ rate. Recently in  \cite{chambolle2014weak}, Chambolle and Dossal considered a very similar choice of the parameters to Beck and Teboulle which obtains $O(1/k^2)$ rate in the objective function and also weak convergence of the iterates to a minimizer. Throughout the rest of the paper we will refer to all these parameter choices for I-FBS that obtain the $O(1/k^2)$ objective function rate as \emph{FISTA-like} choices. Note that FBS corresponds to I-FBS with $\alpha_k$ set to $0$ for all $k\in\mathbb{N}$ and $\lambda_k$ in the range $(0,2/L)$ where $L$ is the Lipschitz constant of $\nabla f$. Nesterov's method of \cite{nesterov2004introductory} corresponds to I-FBS with the same parameter choice as FISTA and with $g=0$. 

One of the aims of this paper is to establish broad conditions for the convergence of the iterates of I-FBS to a minimizer of Problem (\ref{prob:1}). A generalization of the I-FBS family has been studied previously in \cite{lorenz2014inertial} in the setting of monotone operator inclusion problems. However our global analysis proves convergence for a wider range of parameter choices than was proved there. An algorithm similar to I-FBS was developed in \cite{mainge2008convergence} for the more general problem of finding a fixed-point of a nonexpansive operator. However the conditions for convergence are far more strict than those developed in this paper. To the best of our knowledge the conditions for weak convergence of the iterates of I-FBS developed in this paper are novel in the literature. A more detailed comparison with existing literature is given in Section \ref{sec:fistalyap}. 

%\subsection{Local Linear Convergence}
It has been observed that for the special case of Problem SO FBS exhibits \emph{local linear convergence} (see e.g. \cite{Hale:2008,bredies2008linear,liang2014local,agarwal2010fast}), elsewhere called \emph{eventual linear convergence} \cite{demanet2013eventual}.
%This behavior has also been discerned in other iterative approaches to Problem (\ref{prob:1}) under certain conditions (see e.g.  \cite{han2013local,demanet2013eventual}). 
By this it is meant that there exists some $N>0$ such that for all $k>N$ the iterates $x^k$ are confined to a manifold containing the solution set and convergence to a solution is linear. 
%Recently in \cite{liang2014local} FBS was shown to have this behavior in a more general setting where $g$ satisfies certain properties including partial smoothness and local strong convexity. 
%This setting includes many common instances of $g$ such as the $\ell_1$, $\ell_\infty$, nuclear and TV norms. 
It is not known whether I-FBS (including the FISTA-like choices) obtains local linear convergence for Problem SO, however recently \cite{tao2015local} has made progress for the special case of Problem $\ell_1$-LS. %They only consider the classical choice due to Beck and Teboulle, but it seems their analysis should for the entire I-FBS family. Their analysis and results differ from ours in a number of important ways which we discuss in section \ref{sec:knownfistl1ls}
In this paper, we address this by establishing local linear convergence of I-FBS for Problem SO for a broad range of parameter choices including the FISTA-like choices. Of course local linear convergence of the sequence $\{x^k\}_{k\in\mathbb{N}}$ implies convergence of the entire sequence.

\subsection{Contributions of this Paper}

In the first part of the paper, we analyze I-FBS with an appropriate multi-step Lyapunov function. This approach %is inspired by 
%This viewpoint is a powerful way to study the convergence of iterative algorithms.  It is also a very natural approach, since many inertial algorithms are derived directly from continuous dynamical systems. However, even when the discrete algorithm has no continuous analog, a dynamical systems point of view can provide insight. 
%Alvarez' analysis \cite{alvarez2000minimizing} of the inertial proximal algorithm. 
%(Recent papers using the dynamical systems approach to derive and analyze inertial algorithms include \cite{abbas2014dynamical, attouch2014dynamical, su2014differential}).  
%The main idea of this approach is to show that the Lyapunov function decreases along the path of the iterates. With further work 
allows us to develop novel conditions on the algorithmic parameters that imply convergence of the iterates to a minimizer (weak convergence in a real Hilbert space, ordinary convergence in $\mathbb{R}^n$). This widens the range of possible parameter choices beyond those proposed in prior art such as \cite{lorenz2014inertial}. %Furthermore recall that convergence of the iterates is unknown for the classical parameter choice proposed by Beck and Teboulle \cite{Beck:2009}. Our analysis does not apply to the FISTA-like parameter choices because $\alpha_k\to 1$, which is not allowed in our analysis. However the analysis does apply to a modified version with the momentum parameter set to $\min(\overline{\alpha},\alpha_k)$ where $\overline{\alpha}<1$ is allowed to be arbitrarily close to $1$. Of course such a modification no longer guarantees the $O(1/k^2)$ rate once $\alpha_k>\overline{\alpha}$.

%However the analysis does not hold for sequences of momentum parameters $\{\alpha_k\}$ which converge to $1$, excluding the sequences of \cite{Beck:2009} and \cite{chambolle2014weak}. However the minor modification $\alpha'_k=\min(\alpha_k,\overline{\alpha})$ for some $\overline{\alpha}$ arbitrarily close to $1$ allows the Lyapunov analysis to hold. Which means the conclusions of the Lyapunov analysis hold for any finite number of iterations.

In the second part of the paper, we consider in detail the behavior of I-FBS applied to Problem SO. 
We show that after a finite number of iterations I-FBS reduces to minimizing a local function on a reduced support subject to an orthant constraint. This result holds for the FISTA-like choices along with a wide range of other parameter choices. Next we show that a simple ``locally optimal" parameter choice for I-FBS obtains a local linear convergence rate with the best asymptotic iteration complexity. The asymptotically optimal iteration complexity is better than that obtained by the FISTA-like choices and by ISTA. The improvement gained by I-FBS over ISTA when the correct amount of momentum is added is equivalent to the improvement that Nesterov's accelerated method \cite{nesterov2004introductory} achieves over gradient descent for strongly convex functions with Lipschitz gradients. As a corollary of our analysis, we show that the adaptive momentum restart scheme proposed in \cite{o2012adaptive} achieves the optimal iteration complexity. In conrast the analysis in \cite{o2012adaptive} is only valid for strongly convex quadratic functions. Finally for parameter choices for which the ``momentum parameter" $\alpha_k$ is bounded away from $1$, we determine an explicit upper bound on the number of iterations until convergence to the optimal manifold.

%=================Include this paragraph in arxiv version============================
With little effort our analysis of I-FBS for Problem SO can be adapted to apply to the splitting inertial proximal method (SIPM) proposed by Moudafi and Oliny \cite{MoudafiOliny}. This method is a direct generalization of the heavy ball with friction method (HBF) \cite{polyak1964some} to proximal splitting problems and differs from I-FBS in that the gradient of $f$ is computed at $x^k$ rather than $y^{k+1}$. We show that SIPM also achieves local linear convergence for this problem under appropriate parameter constraints.    
%=================Include this paragraph in arxiv version============================

%To summarize, there are three main contributions in this paper. Firstly, a Lyapunov analysis of FISTA with novel conditions for the convergence of the iterates. Secondly the derivation of a local linear convergence rate for sparse optimization problems which is a direct generalization of the results of \cite{Hale:2008}. Thirdly, the derivation of a simple parameter choice with better asymptotic iteration complexity than the choices of \cite{Beck:2009} and \cite{chambolle2014weak} for sparse optimization problems. 

%\subsection{Paper Organization}

The paper is organized as follows. In Section \ref{sec:prelim}, notation and assumptions are discussed. In Section \ref{sec:fistalyap}, we precisely define the I-FBS family  and discuss known convergence results in more detail. In Section \ref{sec:fistalyapanal} we apply our Lyapunov analysis to I-FBS. In Section \ref{sec:l1ls} we derive convergence results for Problem SO. Finally, numerical experiments are presented in Section \ref{sec:sims}. 

\section{Preliminaries}
\label{sec:prelim}
\subsection{Notation and Definitions} 
Throughout the paper, $\mathcal{H}$ is a Hilbert space over the field of real numbers, $\langle\cdot,\cdot\rangle$ is the inner product and $\|\cdot\|$ is the associated norm. Let $\Gamma_0(\mathcal{H})$ be the set of all closed, convex and proper functions whose domain is a subset of $\mathcal{H}$ and range is a subset of $\mathbb{R}\cup\{+\infty\}$. For any $g:\mathcal{H}\to\mathbb{R}\cup\{+\infty\}$ and point $x\in\mathcal{H}$, we denote by $\partial_{\epsilon} g(x)$ for $\epsilon\geq 0$ the \emph{$\epsilon$-enlargement} of the \emph{subdifferential}, defined as the set
\begin{eqnarray}
\partial_{\epsilon} g(x) \triangleq \{v\in\mathcal{H}:g(y)\geq g(x)+\langle v,y-x\rangle-\epsilon, \forall y\in\mathcal{H}\}
\label{eq:ineq3}
\end{eqnarray}
which is always convex and closed and may be empty.  We will use $\partial g$ to denote $\partial_0 g$. When $\partial g(x)$ is a singleton we will call it the \emph{gradient} at $x$, denoted by $\nabla g(x)$. % A differentiable function $f$ has $\partial f(x)=\nabla f(x)$ for all $x$. 

For $a:\mathbb{R}\to\mathbb{R}$ and $b:\mathbb{R}\to\mathbb{R}$, the notation $a(k)=O(b(k))$ (resp. $a(k)=\Omega(b(k))$)  means there exists a constant $C\geq0$ such that $\lim_{k\to\infty}a(k)/b(k)\leq C$ (resp. $\lim_{k\to\infty}a(k)/b(k)\geq C$). The notation $a(k)=o(b(k))$ means $\lim_{k\to\infty}a(k)/b(k)=0$.
We will say a sequence $\{x^k\}_{k\in\mathbb{N}}\subset\mathcal{H}$ converges linearly to $x^*\in\mathcal{H}$ with rate of convergence $q\in(0,1)$, if $\|x^k-x^*\|=O(q^k)$. To be precise we will occasionally refer to this as \emph{asymptotic} or \emph{local} linear convergence. Note that this is different from nonasymptotic, or global linear covergence with rate $q$, in which case there exists a $C\geq 0$ such that $\|x^k-x^*\|\leq C q^k$ \emph{for all} $k\in\mathbb{N}$. In contrast local linear convergence allows for a finite number of iterations where such a relationship does not hold.

Define the \emph{optimal value} of Problem (\ref{prob:1}) as
\begin{eqnarray*}
F^* \triangleq \inf_{x\in\mathcal{H}} F(x)
\end{eqnarray*}
and the \emph{solution set} as
\begin{eqnarray*}
X^* \triangleq \{x\in\mathcal{H}:F(x) = F^*\}.
\end{eqnarray*}
Given a function $a:\mathbb{R}\to\mathbb{R}$, we say that the \emph{iteration complexity} of a method for minimizing $F$ is $\Omega\lbr a(\epsilon)\rbr$ if $k=\Omega\lbr a\lbr\epsilon\rbr\rbr$ implies $F(x^k)-F^*=O(\epsilon)$. To be precise we will occasionally refer to this as the asymptotic iteration complexity. 

For a matrix $A\in\mathbb{R}^{m\times n}$ and a set $S\subset\{1,2,\ldots,n\}$, $A_S$ will denote the matrix in $\mathbb{R}^{m\times |S|}$ formed by taking the columns corresponding to the elements of $S$. For a vector $v\in\mathbb{R}^{n}$, $v_S$ will denote the $|S|\times 1$ vector with entries given by the entries of $v$ on the indices corresponding to the elements of $S$, and $(v_S,0)$ will denote the vector in $\mathbb{R}^n$ equal to $v$ on the indices corresponding to $S$ and equal to zero everywhere else. Given $c\in\mathbb{R}$ and $x\in\mathbb{R}^n$, $\sgn(c)$ is defined as $+1$ if $c\geq0$ and $-1$ if $c<0$, $\sgn(x)$ is simply applying $\sgn(\cdot)$ element-wise. We will use the notation $[c]_+=\max(c,0)$. %For $c,d\in\mathbb{R}$, $d\geq c$, $[c,d)$ represents the half-open interval which includes $c$ but not $d$ (unless $d=c$), and from this one can infer the obvious definitions for $[\cdot,\cdot]$,$(\cdot,\cdot]$ and $(\cdot,\cdot)$.

%Throughout this paper the sequence $\{x^k\}\subset\mathcal{H}$ will always be the output generated by an iterative algorithm for solving Problem (\ref{prob:1}), and the notation
%\begin{eqnarray*}
%\Delta_{k+1}\triangleq x^{k+1}-x^k
%\end{eqnarray*}
%will be used.

\subsection{Proximal Operators}
\label{sec:prox}
%We will make heavy use of the proximal operator of $g$, denoted by $\prox_g$. 
The proximal operator $\prox_g:\mathcal{H}\to\mathcal{H}$ w.r.t. a function $g\in\Gamma_0(\mathcal{H})$ is defined implicitly by
\begin{eqnarray*}
y-\prox_g(y) \in \partial g(\prox_g(y)),
\end{eqnarray*}
and explicitly by
\begin{eqnarray}
\prox_g(y)=\arg\min_x\left\{\frac{1}{2}\|x-y\|^2+ g(x)\right\}.
\label{eq:prox_def}
\end{eqnarray}
Since the function being minimized in (\ref{eq:prox_def}) is strongly convex and in $\Gamma_0(\mathcal{H})$, $\prox_g(y)$ exists and is unique for every $y\in\mathcal{H}$ thus it is a well defined mapping with domain equal to $\mathcal{H}$.
To be more general we will actually use the $\epsilon$-enlarged proximal operator, which is the set
\begin{eqnarray*}
\prox_g^{\epsilon}(y) = \{v:y-v\in\partial_{\epsilon}g(v)
\},
\end{eqnarray*}
which is not necessarily uniquely defined (except when $\epsilon=0$). Note that $\prox_g(y)\in\prox_g^{\epsilon}(y)$ for all $\epsilon\geq 0$. The use of $\prox_g^{\epsilon}$ allows for some approximation error in the computation of the proximal operator. 
\subsection{Cocoercivity and Convexity}  
We say that a G\^{a}teaux differentiable and convex function $f$ has a $\frac{1}{L}$-cocoercive gradient with $L> 0$, if
\begin{eqnarray}
\langle\nabla f(y)-\nabla f(x),y-x\rangle\geq \frac{1}{L}\|\nabla f(y)-\nabla f(x)\|^2,\ \forall x,y\in\mathcal{H}.
\label{eq:coercdef}
\end{eqnarray}
Note this is equivalent to the gradient being $L$-Lipschitz continuous, i.e.
\begin{eqnarray}
\|\nabla f(y)-\nabla f(x)\|\leq L\|y-x\|,\ \forall x,y\in\mathcal{H},
\label{eq:lips}
\end{eqnarray}
For a proof see \cite{abbas2014dynamical} Lemma 1.4 and the Baillon-Haddad Theorem \cite{bauschke2009baillon}. We will need the following two standard properties of such a function. For all $u,v\in\mathcal{H}$:
\begin{eqnarray}
\label{eq:ineq1}
f(u)-f(v)&\leq&\langle\nabla f(v),u-v\rangle + \frac{L}{2}\|u-v\|^2,
\end{eqnarray}
and (by convexity)
\begin{eqnarray}
\label{eq:ineq2}
f(u)-f(v)&\leq&\langle\nabla f(u),u-v\rangle.
\end{eqnarray}
%\subsection{Assumptions on Problem (\ref{prob:1})}
%We can now make explicit our assumptions for Problem (\ref{prob:1}).
We are now ready to formerly state our Assumptions for Problem (\ref{prob:1}).
\vspace{0.2cm}

{\bf Assumption 1.} 
$f$ and $g$ are in $\Gamma_0(\mathcal{H})$, $f$ is G\^{a}teaux differentiable everywhere and has a $1/L$-cocoercive gradient with $L>0$, and $F^*>-\infty$.
\vspace{0.2cm}

\subsection{Properties of Sparse Optimization}
\label{sec:propsparse}
We now outline our assumptions for Problem SO and discuss some of its properties.

%\noindent
{\bf Assumption SO.}
$f\in\Gamma_0(\mathcal{H})$, is \emph{twice} differentiable everywhere, and has a $1/L$-cocoercive gradient with $L>0$. 
%Assume $X^*$ is non-empty and that the Hessian of $f$ exists for all $x\in\mathbb{R}^n$ and
%$$L'\triangleq\max_{x\in\mathbb{R}^n}\Lambda_{\max}(H(x))<\infty,$$
%where $H(x)$ is the Hessian of $f$ at $x$, and $\Lambda_{\max}(B)$ denotes the maximum eigenvalue of the matrix $B$. 
$F^*>-\infty$ and $X^*$ is non-empty. 
%\end{assumptionso}
\vspace{0.2cm}

 The main difference between Assumption SO and Assumption 1 is that we additionally assume that $f$ is twice differentiable. Let $H(x)$ denote the Hessian of $f$ at $x$. Then the Lipschitz constant $L$ of the gradient is equal to the supremum of the largest eigenvalue of $H(x)$ over all $x$. Furthermore note that $\|\cdot\|_1$ is in $\Gamma_0(\mathcal{H})$. Finally note that for $\rho>0$ the function $f(x)+\rho\|x\|_1$ is coercive thus $X^*$ is non-empty. 
 
 Problem SO includes Problem $\ell_1$-LS, defined as
\begin{eqnarray*}
(\hbox{Problem }\ell_1\hbox{-LS})\quad
\underset{x\in\mathbb{R}^n}{\hbox{minimize}}\
F(x)=
\frac{1}{2}\|b-Ax\|^2+\rho\|x\|_1,
\label{prob:l1ls}
\end{eqnarray*}
where $A\in\mathbb{R}^{m\times n}$ and $b\in\mathbb{R}^m$. The solution set $X^*$ of Problem $\ell_1$-LS is always non-empty. The function $f$ has gradient equal to $A^T (Ax-b)$ which is Lipschitz-continuous with Lipschitz constant $L$ equal to the largest eigenvalue of $A^T A$.

The proximal operator associated with $\rho\|\cdot\|_1$ is the \emph{shrinkage and soft-thresholding} operator $S_{\rho}(v):\mathbb{R}\to\mathbb{R}$, applied element-wise. It is defined as $S_{\rho}(v)\triangleq\left[|v|-\rho\right]_+\sgn(v)$, and thus
\begin{eqnarray}
\label{soft_thresh_def}
\{\prox_{\rho\|\cdot\|_1}(z)\}_i=S_{\rho}(z_i),\quad i=1,2,\ldots,n.
\end{eqnarray}
%Note that, $\partial_{\epsilon}\|x\|_1=\partial\|x\|_1$ for all $\epsilon\geq 0$, so the approximate and exact proximal operators are identical.

In the analysis of I-FBS applied to Problem SO we will need the following result proved in \cite{Hale:2008}. 
\begin{theorem}[Theorem 2.1 \cite{Hale:2008}]
For problem SO suppose Assumption SO holds, then there exists a vector $h^*\in\mathbb{R}^n$ such that for all $x^*\in X^*$, $\nabla f(x^*)=h^*$. Furthermore, for all $i\in\{1,2,\ldots,n\}$,
\begin{eqnarray*}
\frac{h_i^*}{\rho}\left\{
    \begin{array}{lr}
      =-1\hbox{ if } \exists\ x\in X^*:x_i>0 \\
      =+1 \hbox{ if } \exists\ x\in X^*:x_i<0\\
      \in [-1,1]\quad \text{else}.
    \end{array}
  \right.
\end{eqnarray*}
%Furthermore $X^*$ is contained in a single orthant of $\mathbb{R}^n$, more precisely
%\begin{eqnarray*}
%X^*\subset \{x\in\mathbb{R}^n:-\sgn(h_i^*)x_i\geq 0,\forall i\}.
%\end{eqnarray*}
\label{thm:constGrad}
\end{theorem}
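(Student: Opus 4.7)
The plan is to first show that $\nabla f$ takes a constant value on the solution set $X^*$, and then read off the componentwise structure of $h^*$ from the first-order optimality condition.

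For the first step, I would take two minimizers $x_1^*, x_2^* \in X^*$ and set $x_t = t x_1^* + (1-t) x_2^*$ for $t \in [0,1]$. Since $X^*$ is convex we have $F(x_t) = F^*$ throughout the segment, so the convexity inequalities $f(x_t) \leq t f(x_1^*) + (1-t) f(x_2^*)$ and $\|x_t\|_1 \leq t \|x_1^*\|_1 + (1-t) \|x_2^*\|_1$ must sum to equality and hence both hold with equality. Thus $f$ is affine along the segment, which gives $\langle \nabla f(x_1^*) - \nabla f(x_2^*), x_1^* - x_2^* \rangle = 0$. Combining this with the $1/L$-cocoercivity assumption (\ref{eq:coercdef}) forces $\|\nabla f(x_1^*) - \nabla f(x_2^*)\| = 0$, so we may define $h^* \triangleq \nabla f(x^*)$ unambiguously for any $x^* \in X^*$. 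Note that twice differentiability plays no role here; cocoercivity alone suffices.

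For the second step, I would invoke the first-order optimality condition for Problem SO, $0 \in \nabla f(x^*) + \rho \partial \|\cdot\|_1(x^*)$, i.e.\ $-h^*/\rho \in \partial \|x^*\|_1$ for every $x^* \in X^*$. Using the standard componentwise description of the subdifferential of the $\ell_1$-norm: if some $x^* \in X^*$ has $x_i^* > 0$, then $(\partial \|x^*\|_1)_i = \{1\}$, forcing $h_i^*/\rho = -1$; if some $x^* \in X^*$ has $x_i^* < 0$, then $(\partial \|x^*\|_1)_i = \{-1\}$, forcing $h_i^*/\rho = +1$; and if every $x^* \in X^*$ has $x_i^* = 0$, then the $i$-th subdifferential component is $[-1,1]$, giving $h_i^*/\rho \in [-1,1]$. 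Since $h^*$ is independent of the choice of minimizer, the first two cases are mutually exclusive: both possibilities would require $h_i^*/\rho$ to equal $-1$ \emph{and} $+1$ simultaneously.

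The main obstacle is really the first step, namely passing from ``$F$ is constant on the segment'' to ``$\nabla f$ is equal at the endpoints'' without strict convexity of $f$. The key leverage is the cocoercivity hypothesis, which converts the vanishing of the inner-product-with-direction into the vanishing of the full gradient difference. Once this is in hand, the remainder is a direct case analysis using the subdifferential of $\|\cdot\|_1$ together with the consistency of $h^*$ across $X^*$.
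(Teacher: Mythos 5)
Your proof is correct. Note that the paper does not prove this statement at all --- it is imported verbatim as Theorem 2.1 of \cite{Hale:2008} --- so the only comparison available is with the argument in that reference. There, the constancy of $\nabla f$ on $X^*$ is obtained by exploiting twice differentiability: along the segment joining two minimizers one has $\frac{d^2}{dt^2}f(x_t)=(x_1^*-x_2^*)^T H(x_t)(x_1^*-x_2^*)=0$, whence $H(x_t)(x_1^*-x_2^*)=0$ by positive semidefiniteness, and $\nabla f(x_1^*)-\nabla f(x_2^*)=\int_0^1 H(x_t)(x_1^*-x_2^*)\,dt=0$. Your route reaches the same conclusion from the weaker hypothesis of $\tfrac1L$-cocoercivity: the equality case of convexity on the segment gives $\langle\nabla f(x_1^*)-\nabla f(x_2^*),x_1^*-x_2^*\rangle=0$, and (\ref{eq:coercdef}) then forces $\nabla f(x_1^*)=\nabla f(x_2^*)$. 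This is a genuine (if modest) improvement in generality --- it shows the second-derivative part of Assumption SO is not needed for this particular result --- and your observation that the first two cases of the trichotomy cannot co-occur, because $h^*$ is the same for every minimizer, is exactly the point that makes the componentwise statement well posed. The remaining case analysis via $-h^*/\rho\in\partial\|x^*\|_1$ is the standard one and is carried out correctly (implicitly assuming $\rho>0$, as the statement itself must).
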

 The following two sets also used in \cite{Hale:2008} will also be crucial to our analysis. Let
$
D\triangleq\{i:|h_i^*|<\rho\}
$
and
$
E\triangleq \{i:|h_i^*|=\rho\}.
$
Note that $D\cap E=\emptyset$ and $D\cup E=\{1,2,\ldots,n\}$. By Theorem \ref{thm:constGrad}, we can infer that $\supp(x^*)\subseteq E$ for all $x^*\in X^*$. Finally, define
\begin{eqnarray*}
\omega\triangleq\min\{\rho-|h^*_i|:i\in D\}> 0.
\end{eqnarray*}

We will need the following Lemma proved in \cite{Hale:2008}.
\begin{lemma}[Lemma 4.1 \cite{Hale:2008}]
Under Assumption SO, if $\lambda\in[0,2/L)$,
$$
\|x-\lambda\nabla f(x) - (y-\lambda\nabla f(y))\|\leq\|x-y\|,\ \forall x,y\in\mathbb{R}^n.
$$
\label{lemma:nonexpansive}
\end{lemma}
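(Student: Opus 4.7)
The plan is to reduce the claim to a routine application of the cocoercivity hypothesis (\ref{eq:coercdef}), which Assumption SO grants via the Baillon--Haddad theorem. Since both sides of the inequality are nonnegative, it suffices to establish the squared version:
\begin{eqnarray*}
\|(x-y) - \lambda(\nabla f(x) - \nabla f(y))\|^2 \leq \|x-y\|^2.
\end{eqnarray*}

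The first step is to expand the left-hand side using the bilinearity of the inner product, obtaining
\begin{eqnarray*}
\|x-y\|^2 \;-\; 2\lambda \langle x-y,\; \nabla f(x) - \nabla f(y)\rangle \;+\; \lambda^2 \|\nabla f(x) - \nabla f(y)\|^2.
\end{eqnarray*}
The second step is to invoke cocoercivity of $\nabla f$ (\ref{eq:coercdef}), which gives
\begin{eqnarray*}
\langle x-y,\; \nabla f(x) - \nabla f(y)\rangle \;\geq\; \frac{1}{L}\|\nabla f(x) - \nabla f(y)\|^2,
\end{eqnarray*}
and therefore upper bounds the expanded expression by
\begin{eqnarray*}
\|x-y\|^2 \;+\; \lambda\!\left(\lambda - \frac{2}{L}\right)\|\nabla f(x) - \nabla f(y)\|^2.
\end{eqnarray*}
The third step is to observe that the hypothesis $\lambda \in [0, 2/L)$ ensures the coefficient $\lambda(\lambda - 2/L)$ is nonpositive, so the whole second term may be discarded, yielding the desired bound.

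There is essentially no obstacle here: the entire argument is a one-line calculation once cocoercivity is in hand. The only subtlety to note is that the problem statement uses the Lipschitz formulation (\ref{eq:lips}), so I would briefly remark that Baillon--Haddad (cited just after (\ref{eq:lips})) allows us to pass from $L$-Lipschitz continuity of $\nabla f$ to $(1/L)$-cocoercivity, which is exactly what makes the step-size range $[0, 2/L)$ tight for nonexpansiveness of $I - \lambda \nabla f$.
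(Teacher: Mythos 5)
Your proof is correct and is exactly the standard argument: expand the square, apply cocoercivity (\ref{eq:coercdef}), and observe that the coefficient $\lambda(\lambda-2/L)$ is nonpositive on $[0,2/L)$. The paper does not reprove this lemma but cites Hale et al.\ and remarks that it is an elementary property of the firmly nonexpansive operator $\frac{1}{L}\nabla f$; your calculation is precisely the proof of that property, so the two approaches coincide.
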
 
An alternative definition of cocoercivity is to say that if an operator $T:\mathcal{H}\to\mathcal{H}$ is $\gamma$-cocoercive than $\gamma T$ is \emph{firmly nonexpansive}. Thus Lemma \ref{lemma:nonexpansive} is just an elementary property of firmly nonexpansive operators (see Proposition 4.2 (iii), and Proposition 4.33 \cite{bauschke2011convex}). 
 
Finally, the following properties of $S_\nu$ will be useful. 
\begin{lemma}[Lemma 3.2 \cite{Hale:2008}]
Fix any $a$ and $b$ in $\mathbb{R}$, and $\nu\geq 0$:
%\newline
\begin{itemize}
\item The function $S_\nu$ is nonexpansive. That is,
\begin{equation*}
|S_\nu(a)-S_\nu(b)|\leq|a-b|.
\end{equation*}
\item If $|b|\geq\nu$ and $\sgn(a) \neq \sgn(b)$ then 
\begin{eqnarray*}
|S_{\nu}(a)-S_{\nu}(b)|\leq|a-b|-\nu.
\end{eqnarray*}
\item If $S_{\nu}(a)\neq 0=S_{\nu}(b)$ then $|a|>\nu,|b|<\nu$ and 
\begin{eqnarray}
|S_{\nu}(a)-S_\nu(b)|\leq|a-b|-(\nu-|b|).
\label{eq:lemmaSv3}
\end{eqnarray}
\end{itemize}
\label{lemma:Sv}
\end{lemma}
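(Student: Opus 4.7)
The plan is to exploit two structural features of $S_\nu$: that it is the proximal operator of the closed convex function $\nu|\cdot|$, and that it is odd (i.e.\ $S_\nu(-v)=-S_\nu(v)$). These reduce almost everything to a clean case analysis based on whether $|a|$ and $|b|$ lie above or below the threshold $\nu$.

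For the first bullet, I would not re-prove nonexpansiveness by cases. Instead I would invoke the standard fact, already referenced earlier in the paper via the Baillon--Haddad theorem and Proposition~4.33 of \cite{bauschke2011convex}, that the proximal operator $\prox_{\nu|\cdot|}=S_\nu$ of any function in $\Gamma_0(\mathbb{R})$ is firmly nonexpansive, and therefore $1$-Lipschitz. This disposes of the first inequality in a single line.

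For the second bullet, I would use the odd symmetry to assume without loss of generality that $\sgn(b)=+1$, which combined with $|b|\ge\nu$ gives $b\ge\nu$, and then $\sgn(a)\ne\sgn(b)$ forces $a<0$. Then I would split according to whether $|a|<\nu$, $|a|=\nu$, or $|a|>\nu$. In each subcase $S_\nu(b)=b-\nu$ and $S_\nu(a)$ is either $0$ or $a+\nu$, and $|a-b|=b-a$, so the required inequality reduces to a short algebraic check. The tightest subcase is $|a|>\nu$, where one obtains the stronger bound $|a-b|-2\nu$.

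For the third bullet, $S_\nu(a)\ne 0$ forces $|a|>\nu$ and $S_\nu(b)=0$ forces $|b|\le\nu$ (the strict inequality $|b|<\nu$ in the statement corresponds to the generic situation; the boundary $|b|=\nu$ is handled by the same computation with equality). Using oddness again I would assume $a>\nu$, so $S_\nu(a)=a-\nu>0$. Then I would split on the sign of $b$: when $b\ge 0$ the two sides match with equality, and when $b<0$ an extra term $-2b>0$ appears in $|a-b|-(\nu-|b|)$ that makes the inequality strict.

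The only mildly delicate point is keeping the case analysis for parts (2) and (3) organized without proliferating subcases; the odd symmetry of $S_\nu$ is the key simplification, cutting the bookkeeping in half. No analytic difficulty is expected beyond this, since every step reduces to comparing two linear expressions in $a,b,\nu$.
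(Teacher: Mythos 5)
The paper does not prove this lemma at all --- it is imported verbatim as Lemma~3.2 of \cite{Hale:2008} --- so there is no in-paper argument to compare against; your proposal supplies a proof where the paper only cites one. Your plan is correct: the prox/firm-nonexpansiveness argument for the first bullet is exactly the standard route, and the algebraic checks in the remaining two bullets all go through (I verified the subcases: in bullet two the case $|a|>\nu$ indeed gives the stronger bound $|a-b|-2\nu$, and in bullet three one gets equality for $b\ge 0$ and a strict inequality for $b<0$). Two small points to watch. First, the odd-symmetry reduction in bullet two is not quite invariant under the paper's convention $\sgn(0)=+1$: if $a=0$ and $b<0$ the hypothesis $\sgn(a)\neq\sgn(b)$ holds, but after negating the pair one has $\sgn(-a)=\sgn(-b)=+1$ and the hypothesis fails, so that edge case must be checked directly (it holds with equality, $|S_\nu(0)-S_\nu(b)|=|b|-\nu$); the reduction in bullet three is clean because the hypothesis $S_\nu(a)\neq 0=S_\nu(b)$ is genuinely negation-invariant. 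Second, you are right that $S_\nu(b)=0$ only forces $|b|\le\nu$, not $|b|<\nu$ as the statement asserts; since the displayed inequality \eqref{eq:lemmaSv3} is what is actually used downstream (in the proofs of Theorems \ref{thm:finite} and \ref{cor:forchamb}) and it holds with equality at $|b|=\nu$, this is an imprecision inherited from \cite{Hale:2008} rather than a defect of your argument.
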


\section{I-FBS}
\label{sec:fistalyap}
To be more general, our global analysis will apply to the following I-FBS family.
\begin{eqnarray*}
\text{(I-FBS-$\epsilon$)}:\forall k\in\mathbb{N}\quad\left|
\begin{array}{ll}
y^{k+1} =& x^k + \alpha_k(x^k - x^{k-1})\label{fists1}\\
x^{k+1} \in&\prox_{\lambda_k g}^{\epsilon_k}\left(y^{k+1}-\lambda_k\nabla f(y^{k+1})\right) 
\end{array}
\right.
\end{eqnarray*}
with $x^0,x^1\in\mathcal{H}$ chosen arbitrarily. Note that for any $\epsilon_{k}\geq0$,
\begin{eqnarray*}
\prox_{\lambda_k g}(y^{k+1}-\lambda_k\nabla f(y^{k+1}))
\in
\prox_{\lambda_k g}^{\epsilon_k}\left(y^{k+1}-\lambda_k\nabla f(y^{k+1})\right). 
\end{eqnarray*}
We will refer to $\{\alpha_k\}_{k\in\mathbb{N}}$ as the ``momentum" parameters and $\{\lambda_k\}_{k\in\mathbb{N}}$ as the ``step-size" parameters. The algorithm differs from I-FBS in that it uses the $\epsilon$-enlarged sub-differential, allowing for some error in the computation of the proximal operator. 
%\subsection{Known Convergence Results for FISTA}
\subsection{Known Convergence Results}
Beck and Teboulle \cite{Beck:2009} proposed the following choice of parameters for I-FBS (I-FBS-$\epsilon$ with the $\epsilon_k$ set to $0$ for all $k\in\mathbb{N}$), 
\begin{eqnarray}
\forall k\in\mathbb{N},\,\lambda_k=\frac{1}{L},\, \alpha_k = \frac{t_k-1}{t_{k+1}},\hbox{ where } t_{k+1} = \frac{1+\sqrt{4t_k^2+1}}{2},\, t_1=1.
\label{eq:fista_mo2}
\end{eqnarray}
The method is known as FISTA. With this choice of parameters, Beck and Teboulle showed that the objective function converges to the minimum at the worst-case optimal rate of $O(1/k^2)$. In fact the $O(1/k^2)$ rate holds for a variety of choices of $\{\alpha_k\}_{k\in\mathbb{N}}$ which all have the form: $\alpha_k=1-O(1/k)$ \cite{tseng2008accelerated}. However the choice in (\ref{eq:fista_mo2}) guarantees the largest possible decrease in a given upper bound of $F(x^k)$ at each iteration.
%In fact other choices, such as $\alpha_k = \frac{}{}$ also guarantee the $O(1/k^2)$ rate.
%\begin{eqnarray}
%F(x^k)-F(x^*)\leq\frac{2 L\|x^0-x^*\|^2}{(k+1)^2},\quad\forall %\label{eq:fista_complex}
%\end{eqnarray}
Chambolle and Dossal \cite{chambolle2014weak} considered I-FBS with a similar choice of $\{\alpha_k\}_{k\in\mathbb{N}}$ to what was proposed by Beck and Teboulle. They investigated, for some $a>2$, 
\begin{eqnarray}
\forall k\in\mathbb{N},\,0<\lambda_k \leq \frac{1}{L},\, \alpha_k = \frac{t_k-1}{t_{k+1}},\,\hbox{ where }\ t_{k+1} = \frac{k+a-1}{a},\, t_1=1.
\label{eq:fista_mo3}
\end{eqnarray}
With this choice of parameters, the authors showed that the objective function achieves the optimal $O(1/k^2)$ convergence rate and in addition $\{x^k\}_{k\in\mathbb{N}}$ weakly converges to a minimizer. 

In contrast to \cite{Beck:2009} and \cite{chambolle2014weak}, our analysis establishes weak convergence of the iterates for a wide range of parameter choices. Indeed, the momentum sequence is not constrained to follow a particular sequence relationship, but instead must be constrained to $\alpha_k\in[0,1]$ and $\lim\sup\alpha_k<1$. However we do not guarantee the $O(1/k^2)$ objective function rate.

Lorenz and Pock \cite{lorenz2014inertial} generalized I-FBS to the problem of finding a zero of the sum of two maximal monotone operators $A$ and $B$, one of which is cocoercive. Setting $A=\nabla f$ and $B=\partial g$ recovers Problem (\ref{prob:1}). They also replaced the scalar step-size $\lambda_k$ with a general positive definite operator $\lambda_k M^{-1}$. Lorenz and Pock proved weak convergence of the iterates to a solution provided certain restrictions on $\{\alpha_k\}_{k\in\mathbb{N}}$ and $\{\lambda_k\}_{k\in\mathbb{N}}$. The restrictions on $\alpha_k$ are stronger than those derived in our global analysis. In their analysis, if the step-size $\lambda_k$ is fixed to $1/L$, $\alpha_k$ is restricted to be less than $\sqrt{5}-2\approx0.24$, whereas, as we shall see in Section \ref{sec:lyap}, our Lyapunov analysis allows $\alpha_k\in[0,1]$, so long as $\lim\sup\alpha_k<1$. For the step-size, their conditions are less restrictive than ours,  allowing for values of $\lambda_k$ up to $2/L$, whereas our analysis only allows up to $1/L$. However in their analysis larger values of $\lambda_k$ lead to a smaller range of feasible values for $\alpha_k$ reducing to $0$ as $\lambda_k$ approaches $2/L$.

In \cite{mainge2008convergence}, an inertial version of the classical Krasnosel'ski\u{i}-Mann (KM) algorithm was analyzed. The KM algorithm finds the fixed points of a nonexpansive operator $T:\mathcal{H}\to\mathcal{H}$. Setting the operator $T(x)=\prox_{\lambda_k g}(x - \lambda_k \nabla f(x))$ in the inertial KM method of \cite{mainge2008convergence} recovers I-FBS, since a point $x$ is a fixed point of $T$ if and only if it is a solution of Problem (\ref{prob:1}). The analysis of \cite{mainge2008convergence} proves weak convergence of the iterates to a fixed point but relies on verifying conditions of the form: $\sum \alpha_k\|x^k - x^{k-1}\|^p<\infty$ with $p$ equal to $1$ and $2$. In general this condition must be enforced online, restricting the range of possible choices for the sequence of momentum parameters. However, it was shown in \cite{mainge2007inertial} that choosing $\alpha_k$ to be nondecreasing and satisfying $\alpha_k\in[0,\overline{\alpha})$ with $\overline{\alpha}<1/3$ suffices to ensure the condition is satisfied and thus prove weak convergence. This condition is more restrictive than the ones derived in this paper for the special case of Problem (\ref{prob:1}).   

\subsection{Known Convergence Results for Sparse Optimization}
\label{sec:knownfistl1ls}
 The FISTA-like sequences for $\{\alpha_k\}_{k\in\mathbb{N}}$ defined in (\ref{eq:fista_mo2}) and (\ref{eq:fista_mo3}) both converge to $1$.  As we will see in Section \ref{sec:underD} this is not desirable for Problem SO. In the language of dynamical systems, when the momentum is too high the iterates move into an ``underdamped regime" leading to oscillations in the objective function and slow convergence (see \cite{o2012adaptive} for an analysis in the strongly-convex quadratic case). We will show that for Problem SO the FISTA-like choices are not optimal from the viewpoint of asymptotic rate of convergence under a local strong-convexity assumption (Corollary \ref{cor:FISTAforL1LS}) or a strict complimentarity condition (Corollary \ref{cor:strict_comp}).

In \cite{tao2015local}, the behavior of ISTA and FISTA (i.e. I-FBS with parameter choice (\ref{eq:fista_mo2})) applied to Problem $\ell_1$-LS was investigated through a spectral analysis. The authors show that both algorithms obtain local linear convergence for this problem, under the condition that the minimizer is unique, but without an estimate for the number of iterations until convergence to the optimal manifold. Furthermore they determine that the local rate of convergence of FISTA is worse than ISTA, while the transient behavior of FISTA is better than ISTA. Therefore they suggest switching from FISTA to ISTA once the optimal manifold has been identified. Our contribution differs in several ways. We note that the poor local performance of the FISTA-like choices is due to having the momentum parameter converge to $1$. Therefore we determine the optimal value for the momentum parameter that should be used in the asymptotic regime which allows for a better asymptotic rate than both ISTA and FISTA and suggest a heuristic method for estimating the optimal momentum. We also show that the adaptive restart method of \cite{o2012adaptive} will achieve the $O(1/k^2)$ rate in the transient regime and the optimal asymptotic rate. Furthermore our analysis holds for Problem SO with Problem $\ell_1$-LS as a special case and we do not require the minimizer to be unique. Finally, in the case where $\lim\sup\alpha_k<1$, we provide explicit upper bounds on the number of iterations until I-FBS has converged to the optimal manifold. 

In \cite{lin2014adaptive} a method was developed for solving Problem (\ref{prob:1}) when $f$ is strongly convex. The method is equivalent to I-FBS with the same prescription for $\{\alpha_k\}_{k\in\mathbb{N}}$ as determined by Nesterov for his method for minimizing strongly convex functions (constant scheme 2.2.8. of \cite{nesterov2004introductory}). However it also includes a backtracking procedure for adjusting $\{\lambda_k\}_{k\in\mathbb{N}}$ and $\{\alpha_k\}_{k\in\mathbb{N}}$ when the strong convexity and Lipschitz gradient parameters are not known. 
%The method has the advantage that the Lipschitz constant $L$ and the strong convexity parameter of $f$ do not need to be known exactly. 
The authors of \cite{lin2014adaptive} also extended their method to Problem $\ell_1$-LS including the case where $f$ is not strongly convex. 
%Even though Problem $\ell_1$-LS is not necessarily strongly convex, the authors of \cite{lin2014adaptive} argued that the support of the solution is typically sparse and the objective function corresponding to the support set is a quadratic which is typically strongly convex. Therefore iterative methods often achieve linear rates of convergence once the support becomes sufficiently sparse. 
The authors showed that under conditions on the matrix $A$ related to the Restricted Isometry Property (RIP) used in compressed sensing, their algorithm obtains nonasymptotic (global) linear convergence, so long as the initial vector is sufficiently sparse. However, as the authors note the RIP-like conditions are much stronger than those typically found in the literature. Indeed the conditions are much stronger than those required in our proof of local linear convergence. We establish that I-FBS obtains local linear convergence regardless of the initialization point. Furthermore no RIP-like assumptions are necessary. Local linear convergence can be proved under the mild condition that the smallest eigenvalue of the Hessian restricted to the support of a minimum is non-zero at the minimum point. Or if this does not hold, under a common strict-complementarity condition (see Section \ref{sec:paramsfist}). That being said, it should be noted that local linear convergence is not as strong a statement as global linear convergence

\section{A Global Analysis of I-FBS}
\label{sec:fistalyapanal}
This section derives conditions on $\{\epsilon_k\}_{k\in\mathbb{N}}$, $\{\alpha_k\}_{k\in\mathbb{N}}$ and $\{\lambda_k\}_{k\in\mathbb{N}}$ which imply weak convergence of the iterates $\{x^k\}_{k\in\mathbb{N}}$ of I-FBS to a minimizer of Problem (\ref{prob:1}). Throughout the rest of the paper, let $\Delta_{k+1}$ denote $x^{k+1}-x^k$. Given $S,T\subset\mathcal{H}$, define $d(S,T)\triangleq\min_{s\in S,t\in T}\|s-t\|$.

%\end{eqnarray*}
\label{sec:lyap}
%This is the same Lyapunov function used by Alvarez \cite{alvarez2000minimizing} to analyze the inertial proximal method. 
%We make this choice of Lyapunov function because it provides insight into the convergence of the algorithm. Specifically when we consider $E_{k+1}-E_k$, the memory terms $\|\Delta_{k+1}\|^2$ and $\|\Delta_k\|^2$ can be combined with terms coming from the memory inside FISTA. %When we consider FISTA as a discrete dynamical system, $E_k$ can be interpreted as the energy of the system, with the first term representing ``kinetic energy" and the second term representing the ``potential energy" associated with the conservative field given by $F$. This is the same Lyapunov function used by  Alvarez \cite{Alvarez2000} to analyze the inertial proximal method.
%%where $\gamma_k=\frac{\alpha_k}{2\lambda}$. If we think of Algorithm (\ref{eq:fistadef}) as describing a discrete-time dynamical system, we can interpret $E_k$ as the energy of the system. The first term being a kinetic energy term and the second coming from a force field defined by $F(x)$.

\begin{theorem}
Suppose that Assumption 1 holds. Assume $\{\lambda_k\}_{k\in\mathbb{N}}$ is non-decreasing and satisfies $0<\lambda_k\leq 1/L$ for all $k$, and $\{\epsilon_k\}_{k\in\mathbb{N}}$ satisfies $\epsilon_k\geq0$ for all $k$ and $\sum_k\epsilon_k<\infty$. If $0\leq\alpha_k\leq 1$ for all $k$ and $\lim\sup\alpha_k<1$, then for the iterates of I-FBS-$\epsilon$, we have
\begin{enumerate}[(i)]
%\item  $E_{k+1}\leq E_k+\epsilon_{k+1}$ for all $k$. 
\item $\sum_{k=1}^\infty \|\Delta_k\|^2 <\infty$.\label{state:finitesum}
\item $\lim_{k\to\infty}d(0,\nabla f(x^k)+\partial_{\epsilon_k} g(x^k)) = \lim_{k\to\infty}d(0,\nabla f(y^k)+\partial_{\epsilon_k} g(y^k))=0$.\label{state:convgrad}
\item If, in addition, $X^*$ is non-empty, then $x^k$ converges weakly to some $\hat{x}\in X^*$.\label{state:weak}
\end{enumerate}
\label{thm:fista_lyap}
\end{theorem}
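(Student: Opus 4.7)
The plan is to construct a Lyapunov function $\Phi_k = F(x^k) - F^* + c_k \|\Delta_k\|^2 \geq 0$ that decreases along the iterates up to the summable error $\{\epsilon_k\}$, and then extract the three claims in sequence. To obtain the descent inequality, I start from the inclusion
$$u_k := \frac{y^{k+1} - x^{k+1}}{\lambda_k} - \nabla f(y^{k+1}) \in \partial_{\epsilon_k} g(x^{k+1}),$$
apply its $\epsilon_k$-subgradient inequality at $x^k$ to bound $g(x^{k+1})$ above by $g(x^k) + \langle u_k, x^{k+1}-x^k\rangle + \epsilon_k$, and combine this with the descent lemma \eqref{eq:ineq1} at $(x^{k+1},y^{k+1})$ and the convexity inequality \eqref{eq:ineq2} at $(y^{k+1},x^k)$. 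After $\nabla f(y^{k+1})$ cancels out and the cross term $\langle \Delta_k,\Delta_{k+1}\rangle$ is absorbed via $2ab \leq a^2+b^2$, one arrives at
$$F(x^{k+1}) + M_k \|\Delta_{k+1}\|^2 \leq F(x^k) + N_k \|\Delta_k\|^2 + \epsilon_k,$$
with $M_k = \frac{2-\alpha_k}{2\lambda_k} - \frac{L(1-\alpha_k)}{2}$ and $N_k = \frac{\alpha_k}{2\lambda_k} - \frac{L\alpha_k(1-\alpha_k)}{2} \geq 0$. A short calculation using $\lambda_k L \leq 1$, $\{\lambda_k\}$ non-decreasing, and $\lim\sup\alpha_k < 1$ shows $M_k - N_{k+1} \geq \delta$ for some uniform $\delta > 0$ (for constant parameters one gets $\delta = L(1-\alpha)^2/2$). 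Choosing $c_k = N_k$ yields $\Phi_{k+1} + \delta \|\Delta_{k+1}\|^2 \leq \Phi_k + \epsilon_k$, and telescoping together with $\sum \epsilon_k < \infty$ and $\Phi_k \geq 0$ gives (i).

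Part (ii) then follows essentially for free. The vector $\nabla f(x^{k+1}) + u_k = \nabla f(x^{k+1}) - \nabla f(y^{k+1}) + (y^{k+1} - x^{k+1})/\lambda_k$ lies in $\nabla f(x^{k+1}) + \partial_{\epsilon_k} g(x^{k+1})$ and has norm bounded by $(L + 1/\lambda_k)(\alpha_k\|\Delta_k\| + \|\Delta_{k+1}\|)$, which vanishes by (i). The statement for $y^{k+1}$ is obtained by transporting $u_k$ from $x^{k+1}$ to $y^{k+1}$: one checks that $u_k \in \partial_{\tilde\epsilon_k} g(y^{k+1})$ for an explicit $\tilde\epsilon_k$ whose excess over $\epsilon_k$ is controlled by $\|u_k\|\cdot\|y^{k+1}-x^{k+1}\|$, and then $\nabla f(y^{k+1}) + u_k = (y^{k+1} - x^{k+1})/\lambda_k$ has the same vanishing norm. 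For (iii), combining the averaging property of the forward-backward map $T_k(v) = \prox_{\lambda_k g}(v - \lambda_k \nabla f(v))$, whose fixed-point set is $X^*$, with the $\epsilon_k$-error bookkeeping gives a quasi-Fej\'er inequality
$$\phi_{k+1} - \phi_k \leq \alpha_k(\phi_k - \phi_{k-1}) + C(\|\Delta_k\|^2 + \epsilon_k), \qquad \phi_k := \|x^k - x^*\|^2, \quad x^* \in X^*.$$
Since $\{\|\Delta_k\|^2\}$ and $\{\epsilon_k\}$ are summable and $\lim\sup\alpha_k < 1$, a standard Alvarez--Attouch-type discrete lemma yields that $\phi_k$ converges for every $x^* \in X^*$. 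Combined with demiclosedness of $\partial F$ applied to (ii), which forces every weak cluster point of $\{x^k\}$ to lie in $X^*$, Opial's lemma then delivers weak convergence to some $\hat x \in X^*$.

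The main obstacle I expect is securing the uniform lower bound $M_k - N_{k+1} \geq \delta > 0$ when $\{\lambda_k\}$ and $\{\alpha_k\}$ both vary: the explicit coefficients contain competing contributions of size $1/\lambda_k$ and $L(1-\alpha_k)$, and the monotonicity hypothesis on $\lambda_k$ is precisely what lets them align across consecutive indices. A secondary subtlety is the transport of the $\epsilon_k$-subgradient from $x^{k+1}$ to $y^{k+1}$ needed in (ii), since the $\epsilon$-subdifferential is not continuous under weak convergence; this is handled by tracking an explicit extra error that vanishes as $\|y^{k+1} - x^{k+1}\| \to 0$, which itself follows from (i).
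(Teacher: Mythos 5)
Your proposal is correct and follows essentially the same route as the paper: a multi-step Lyapunov function of the form $F(x^k)+c_k\|\Delta_k\|^2$ obtained from the descent lemma, convexity, and the $\epsilon_k$-subgradient inequality, telescoped for (i); the residual bound for (ii); and the Moudafi--Oliny quasi-Fej\'er recursion $\varphi_{k+1}-\varphi_k\leq\alpha_k(\varphi_k-\varphi_{k-1})+\delta_k$ combined with Opial's lemma for (iii). Your choice $c_k=N_k$ (versus the paper's $\tfrac{\alpha_k}{2\lambda_k}$) and the explicit verification that $M_k-N_{k+1}\geq\tfrac{L(1-\alpha_{k+1})}{2}$ is only a cosmetic tightening of the same argument.
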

\begin{proof}
The proof consists of two parts. In the first, we prove statements (\ref{state:finitesum}) and (\ref{state:convgrad}) using arguments inspired by Alvarez' analysis of the inertial proximal method in \cite{alvarez2000minimizing}. In the second part, we invoke Opial's Lemma \cite{opial1967weak} to prove statement (\ref{state:weak}). The second part is inspired by the analysis of the splitting inertial proximal algorithm by Moudafi and Oliny in \cite{MoudafiOliny}. %We also include an alternative proof for the case where $g=0$ in the appendix. Although this case is covered by the general analysis, we believe the specific technique, which follows the spirit of \cite{Alvarez2000} more than \cite{MoudafiOliny}, is of independent interest.
\subsubsection*{Proof of statements (\ref{state:finitesum}) and (\ref{state:convgrad})}
Define the \emph{Lyapunov function}, or discrete energy, to be
\begin{eqnarray*}
E_{k} \triangleq \frac{\alpha_k}{2\lambda_k}\|\Delta_{k}\|^2+f(x^{k})+g(x^k).
\end{eqnarray*}
Note that this is the same energy function used by Alvarez \cite{Alvarez2000}. Inequalities (\ref{eq:ineq1}), (\ref{eq:ineq2}) and (\ref{eq:ineq3}) imply
\begin{eqnarray}
\nonumber E_{k+1}-E_k &=& \frac{\alpha_{k+1}}{2\lambda_{k+1}}\|\Delta_{k+1}\|^2-\frac{\alpha_k}{2\lambda_k}\|\Delta_k\|^2+f(x^{k+1})-f(x^k)+g(x^{k+1})-g(x^k)\\\nonumber
&\leq&\frac{\alpha_{k+1}}{2\lambda_{k+1}}\|\Delta_{k+1}\|^2
-\frac{\alpha_k}{2\lambda_k}\|\Delta_k\|^2
+\langle\nabla f(y^{k+1})+v,\Delta_{k+1}\rangle 
\\
&&+ \frac{L}{2}\|x^{k+1}-y^{k+1}\|^2+\epsilon_{k},\quad\forall\ v\in\partial_{\epsilon_{k}} g(x^{k+1}).
\label{eq:lyap1}
\end{eqnarray}
Note that the existence of a subgradient $v$ is guaranteed because the $\epsilon$-enlarged proximal operator has domain equal to $\mathcal{H}$. Using (\ref{eq:lyap1}), the $x^{k+1}$ - update in I-FBS and the fact that $\lambda_k\leq\lambda_{k+1}$, we infer that
\begin{eqnarray}
\nonumber E_{k+1}-E_k&\leq&\frac{\alpha_{k+1}}{2\lambda_{k+1}}\|\Delta_{k+1}\|^2
-\frac{\alpha_k}{2\lambda_k}\|\Delta_k\|^2
-\frac{1}{\lambda_k}\langle x^{k+1}-y^{k+1},\Delta_{k+1}\rangle
\\\nonumber
&&+\frac{L}{2}\|\Delta_{k+1} - \alpha_k\Delta_k\|^2
+\epsilon_{k}
\\\nonumber
&=&\frac{\alpha_{k+1}}{2\lambda_{k+1}}\|\Delta_{k+1}\|^2
-\frac{\alpha_k}{2\lambda_k}\|\Delta_k\|^2
-\frac{1}{\lambda_k}\langle \Delta_{k+1}-\alpha_k\Delta_k,\Delta_{k+1}\rangle
\nonumber
\\&&\nonumber
+\frac{L}{2}(\|\Delta_{k+1}\|^2+\alpha_k^2\|\Delta_k\|^2)-\alpha_k L\langle \Delta_{k+1},\Delta_k\rangle+\epsilon_{k}\\\nonumber
 &\leq&(\frac{L}{2}-\frac{1}{\lambda_k}+\frac{\alpha_{k+1}}{2\lambda_k})\|\Delta_{k+1}\|^2
 +(\frac{\alpha_k^2 L}{2}-\frac{\alpha_k}{2\lambda_k})\|\Delta_k\|^2 
 \\\nonumber
 &&+ \frac{\alpha_k(1-\lambda_k L)}{\lambda_k}\langle \Delta_{k+1},\Delta_k\rangle+\epsilon_{k}
 \\\nonumber
 &=&-\frac{\alpha_k(1-\lambda_k L)}{2\lambda_k}\|\Delta_{k+1}-\Delta_k\|^2 -\frac{2 +\lambda_k L (\alpha_k-1) - \alpha_k - \alpha_{k+1}}{2\lambda_k }\|\Delta_{k+1}\|^2\\\nonumber
 &&-\frac{L\alpha_k (1-\alpha_k)}{2\lambda_k}\|\Delta_k\|^2+\epsilon_{k}.
 \label{eq:decLyap0}
\end{eqnarray}
%Now we would like each of the three coefficients on the R.H.S. to be non-positive and finite, which would imply statement (i). This is true if $\alpha_k\geq 0$,
%\begin{eqnarray}
%0<\lambda<\frac{1}{L},
%\label{eq:cond1}
%\end{eqnarray}
%\begin{eqnarray}
%\lambda<\frac{2-\alpha}{L(1-\alpha)+2\gamma}
%\label{eq:cond2}
%\end{eqnarray}
%and
%\begin{eqnarray}
%\gamma > \frac{\alpha(\lambda L(\alpha-1)+1)}{2\lambda} \triangleq B_{\gamma}(\lambda,L,\alpha).
%\end{eqnarray}
%If we further restrict $\alpha<1$ then this implies 
%\begin{eqnarray}
%B_{\gamma}(\lambda,L,\alpha)\leq\frac{\alpha}{2\lambda}.
%\end{eqnarray}
%Therefore we chose $\gamma=\frac{\alpha}{2\lambda}$. 
Moving terms to the other side and summing implies, for all $N\in\mathbb{Z}_+$,
\begin{eqnarray}
&&\sum_{k=1}^N\left[ \frac{\alpha_k(1-\lambda_k L)}{2\lambda_k}\|\Delta_{k+1}-\Delta_k\|^2 + \frac{2 +\lambda_k L (\alpha_k-1) - \alpha_k - \alpha_{k+1}}{2\lambda_k }\|\Delta_{k+1}\|^2
\right.
\nonumber
\\&&
+\left.\frac{L\alpha_k (1-\alpha_k)}{2\lambda_k}\|\Delta_k\|^2\right]
\nonumber
\\
&\leq& 
E_1 - E_{N+1}+\sum_{k=1}^N\epsilon_{k}
\nonumber
\\
&\leq&
E_1 - F^*+ 
\sum_{k=1}^N\epsilon_{k}
=
F(x^1) - F^*+ 
\frac{\alpha_1}{2\lambda_1}\|\Delta_1\|^2
+\sum_{k=1}^N\epsilon_{k}
<\infty.
\label{eq:sumfinite}
\end{eqnarray}
Inequality (\ref{eq:sumfinite}) along with the assumptions on $\{\alpha_k\}_{k\in\mathbb{N}}$ and $\{\lambda_k\}_{k\in\mathbb{N}}$  imply statement (\ref{state:finitesum}).
%\begin{eqnarray}
%\sum_k\frac{L(1-\overline{\alpha})}{2}\|\Delta_{k+1}\|^2\leq\sum_k\frac{2 +\lambda_k L (\alpha_k-1) - \alpha_k - \alpha_{k+1}}{2\lambda_k }\|\Delta_{k+1}\|^2<\infty,
%\end{eqnarray}
 Statement (\ref{state:finitesum}) implies  $\|\Delta_{k+1}\|\to 0$, therefore $\|x^k - y^{k+1}\|\to 0$ via the $y^{k+1}$ - update of I-FBS. This implies that $\|x^k-y^k\|\to 0$, because $\|x^k - y^k\|\leq\|x^{k-1}-y^k\|+\|x^{k-1}-x^k\|$. Finally, using the $x^{k+1}$ -  update of I-FBS we infer that
\begin{eqnarray}
\lim_{k\to\infty}d(\nabla f(y^k)+\partial_{\epsilon_k} g(y^k),0) = \lim_{k\to\infty}d(\nabla f(x^k)+\partial_{\epsilon_k} g(x^k),0)= 0.
\label{eq:gradlimit}
\end{eqnarray}
\subsubsection*{Proof of statement (\ref{state:weak})}
If $x^{v_k}$ is a subsequence which weakly converges to $x'$, then the $y^{k+1}$-update of I-FBS implies $y^{v_k}$ also weakly converges to $x'$. This, combined with the $x^{k+1}$-update implies that $x'\in X^*$. Suppose that for any $x^*\in X^*$, the sequence $\|x^k-x^*\|$ has a limit. This implies the sequence $x^k$ is bounded and therefore it has at least one weakly-convergent subsequence, $x^{\nu_k}$ (ordinary convergence in $\mathbb{R}^n$). By the above reasoning the limit of this subsequence, $\tilde{x}$ must be in $X^*$. Furthermore $\lim_k\|x^k-\tilde{x}\|$ exists. Consider another subsequence $x^{\nu'_k}$ which converges to $\tilde{x}'\in X^*$. By considering the fact that $\lim_k\|x^{\nu_k}-\tilde{x}\|^2=\lim_k\|x^{\nu_k'}-\tilde{x}\|^2$ and the corresponding statement for $\tilde{x}'$, one can see that $\|\tilde{x}-\tilde{x}'\|=0$. Therefore the set of weakly convergent subsequences is the singleton $\{\tilde{x}\}$. Thus $x^k$ weakly converges to $\tilde{x}\in X^*$ (This is Opial's Lemma \cite{opial1967weak}). 

 Assume $X^*$ is non-empty. We now proceed to show that, for any $x^*\in X^*$, the sequence $\|x^k-x^*\|$ has a limit. Our proof closely follows Moudafi and Oliny's analysis \cite{MoudafiOliny}, and is similar to the later variants \cite{chambolle2014weak,lorenz2014inertial}. The main difference is we allow for $\alpha_k$ to be $1$ for a finite number of iterations. Fix $x^*\in X^*$ and define $\varphi_k = \frac{1}{2}\|x^k-x^*\|^2$. Now
\begin{eqnarray}
\varphi_k - \varphi_{k+1} = \frac{1}{2}\|\Delta_{k+1}\|^2+\langle x^{k+1}-y^{k+1},x^*-x^{k+1}\rangle+\alpha_k\langle \Delta_k,x^*-x^{k+1}\rangle.
\label{eq:mod1}
\end{eqnarray}
Since $$-x^{k+1}+y^{k+1}-\lambda_k\nabla f(y^{k+1})\in\lambda_k\partial_{\epsilon_{k}} g(x^{k+1}),$$ $-\lambda_k\nabla f(x^*)\in\lambda_k\partial_{\epsilon_{k}} g(x^*)$ and $\langle\partial_{\epsilon} g(x^{k+1})-\partial_{\epsilon} g(x^*),x^{k+1}-x^*\rangle\geq -\epsilon$, it follows that
\begin{eqnarray}
\langle x^{k+1}-y^{k+1}+\lambda_k(\nabla f (y^{k+1})-\nabla f(x^*)),x^*-x^{k+1}\rangle\geq -\lambda_k\epsilon_{k}.
\label{eq:mod2}
\end{eqnarray}
Combining (\ref{eq:mod1}) and (\ref{eq:mod2}) we obtain
\begin{eqnarray}
\varphi_k - \varphi_{k+1}&\geq&\frac{1}{2}\|\Delta_{k+1}\|^2+\lambda_k\langle\nabla f(y^{k+1})-\nabla f(x^*),x^{k+1}-x^*\rangle - \alpha_k\langle \Delta_k,x^{k+1}-x^*\rangle
\nonumber\\
&&-\lambda_k\epsilon_{k}.
\label{eq:mod3}
\end{eqnarray}
Now
\begin{eqnarray}
\nonumber\langle \Delta_k,x^{k+1}-x^*\rangle&=&\langle \Delta_k,x^k-x^*\rangle+\langle \Delta_k,\Delta_{k+1}\rangle\\
&=&\varphi_k - \varphi_{k-1} + \frac{1}{2}\|\Delta_k\|^2+\langle \Delta_k,\Delta_{k+1}\rangle.
\label{eq:mod4}
\end{eqnarray}
Combining (\ref{eq:mod3}) and (\ref{eq:mod4}) yields
\begin{eqnarray}
\nonumber\varphi_{k+1}-\varphi_k - \alpha_k(\varphi_k - \varphi_{k-1})&\leq& -\frac{1}{2}\|\Delta_{k+1}\|^2+\alpha_k\langle \Delta_k,\Delta_{k+1}\rangle+\frac{\alpha_k}{2}\|\Delta_k\|^2\\&&-\lambda_k\langle\nabla f(y^{k+1})-\nabla f(x^*),x^{k+1}-x^*\rangle 
\nonumber\\
&&+ \lambda_k\epsilon_{k}.
\label{eq:mod5}
\end{eqnarray}
Now we use the fact that $\nabla f$ is cocoercive as follows. Inequality (\ref{eq:coercdef}) implies 
\begin{eqnarray}
\nonumber\lambda_k\langle\nabla f(y^{k+1})-\nabla f(x^*),x^{k+1}-x^*\rangle &=&\lambda_k(\langle\nabla f(y^{k+1})-\nabla f(x^*),y^{k+1}-x^*\rangle
\\\nonumber
&&+\langle\nabla f(y^{k+1}-\nabla f(x^*),x^{k+1}-y^{k+1}\rangle)\\\nonumber
&\geq& 
\frac{\lambda_k}{L}(\|\nabla f(y^{k+1})-\nabla f(x^*))\|^2 
\\\nonumber
&&+ \langle\nabla f(y^{k+1})-\nabla f(x^*),x^{k+1}-y^{k+1}\rangle)\\
&\geq&-\frac{\lambda_k L}{4}\|x^{k+1}-y^{k+1}\|^2
\label{eq:coerc}
\end{eqnarray}
where (\ref{eq:coerc}) follows by completing the square. Combining (\ref{eq:mod5}) and (\ref{eq:coerc}) we infer
\begin{eqnarray}
\nonumber\varphi_{k+1}-\varphi_k-\alpha_k(\varphi_k - \varphi_{k-1})
&\leq& 
-\frac{1}{2}\|\Delta_{k+1}\|^2+\alpha_k\langle \Delta_k,\Delta_{k+1}\rangle+\frac{\alpha_k}{2}\|\Delta_k\|^2\\\nonumber&&+\frac{\lambda_k L}{4}\|\Delta_{k+1}-\alpha_k\Delta_k\|^2+ \lambda_k\epsilon_{k}\\\nonumber
&=& 
\frac{\alpha_k}{2}\left(\frac{\lambda L}{2}-1\right)\|\Delta_{k+1}-\Delta_k\|^2
\\
&&
+
\frac{\alpha_k}{4}\left(4+\lambda L (\alpha_k- 1)\right)\|\Delta_k\|^2
\nonumber\\
&&
+
\frac{\lambda L - 2}{4}(1-\alpha_k)\|\Delta_{k+1}\|^2
+ \lambda_k\epsilon_{k}.
\label{eq:recurse1}
\end{eqnarray}
Note that the coefficients of $\|\Delta_{k+1}-\Delta_k\|^2$ and $\|\Delta_{k+1}\|^2$ are non-positive. Set $\theta_k\triangleq \varphi_k - \varphi_{k-1}$ and 
\begin{eqnarray}
\delta_k\triangleq 
\frac{\alpha_k}{4}\left(4+\lambda L (\alpha_k- 1)\right)\|\Delta_k\|^2
+\lambda_k\epsilon_{k}
\label{eq:defofdelt}
\end{eqnarray}
 and note that $\sum_{k=1}^\infty \delta_k < \infty$. 

The argument from now on is basically identical to \cite{MoudafiOliny} except we allow for sequences $\alpha_k$ which are equal to $1$ for a finite number of $k$. Restate (\ref{eq:recurse1}) as
\begin{eqnarray}
\nonumber
\theta_{k+1}&\leq& \alpha_k\theta_k + \delta_k\\
&\leq& \alpha_k[\theta_k]_+ + \delta_k.
\label{eq:recurs2}
\end{eqnarray}
Since $\lim\sup\alpha_k<1$, there exists an integer $K\geq0$ and $\overline{\alpha}\in[0,1)$ such that $\alpha_k\leq\overline{\alpha}<1$ for all $k>K$. This and (\ref{eq:recurs2}) imply that, for $k>K$
\begin{eqnarray*}
[\theta_{k+1}]_+\leq\overline{\alpha}[\theta_k]_++\delta_k.
\end{eqnarray*}
Thus for $k>K$
\begin{eqnarray*}
[\theta_{k+1}]_+
\leq
\overline{\alpha}^{k-K}[\theta_K]_+
+\sum_{j=K}^{k}\overline{\alpha}^{k-j}\delta_{j}
+\overline{\alpha}^{k-K}\sum_{j=1}^K\delta_j.
\end{eqnarray*}
Careful examination of this expression yields
\begin{eqnarray}
\sum_{k=0}^{\infty}[\theta_{k+1}]_+
\leq
K\sum_{k=0}^K\delta_k +
\frac{\overline{\alpha}^K}{1-\overline{\alpha}}\left([\theta_1]_++\sum_{k=K}^{\infty}\delta_k\right)<\infty.
\label{eq:upperonsumtheta}
\end{eqnarray}
Set $w_k\triangleq \varphi_k - \sum_{j=0}^k[\theta_j]_+$. Since $\varphi_k\geq 0$ and $\sum[\phi_j]_+<\infty$, $w_k$ is bounded from below. $w_k$ is non-increasing, therefore we have it converges. Therefore $\varphi_k$ converges for every $x^*\in X^*$. By invoking Opial's Lemma, statement (vi) is established.
\end{proof}

%{\bf Remark.} As well as determining conditions for weak-convergence, Theorem \ref{thm:fista_lyap} also provides conditions which ensure $\sum_k\|\Delta_k\|^2$ is finite. It will turn out that this condition is vital to our analysis of FISTA for Problem $\ell_1$-LS in Section \ref{sec:l1ls}. 
%\emph{Remark 1.}

Theorem \ref{thm:fista_lyap} does not apply to the FISTA-like parameter choices because for all of these choices $\alpha_k\to 1$.  However the theorem does apply if we make the following modification. Replace the momentum parameter sequence $\{\alpha_k\}_{k\in\mathbb{N}}$ with $\min(\alpha_k,\overline{\alpha})$ where $\overline{\alpha}<1$. This parameter choice satisfies the assumptions of the theorem, and $\overline{\alpha}$ can be chosen arbitrarily close to $1$. However the $O(1/k^2)$ objective function convergence rate is no longer guaranteed once $\alpha_k$ exceeds $\overline{\alpha}$.

%\end{remark}

In the following Corollary, we use (\ref{eq:sumfinite}) to determine explicit bounds on $\sum_k\|\Delta_k\|^2$ which will be useful in the analysis of Problem SO.
\begin{corollary}
\label{cor1st}
Suppose that Assumption 1 holds. Assume $\{\lambda_k\}_{k\in\mathbb{N}}$ is non-decreasing and satisfies $0<\lambda_k\leq 1/L$ for all $k$, $\{\epsilon_k\}_{k\in\mathbb{N}}$ satisfies $\epsilon_k\geq0$ for all $k$ and $\sum_k\epsilon_k<\infty$, there exists $\overline{\alpha}\in[0,1)$ such that $\{\alpha_k\}_{k\in\mathbb{N}}$ satisfies $0\leq\alpha_k\leq\overline{\alpha}$ for all $k$. Then for the iterates of I-FBS,
\begin{eqnarray}
\sum_{k=1}^\infty\|\Delta_k\|^2 
\leq 
\frac{2}{2L(1-\overline{\alpha})-1}
\lbr
F(x^1)-F^*
+\frac{\alpha_1}{2\lambda_1}\|\Delta_1\|^2
+\sum_{k=1}^\infty\epsilon_k
\rbr
.
\label{co1}
\end{eqnarray}
If, in addition, there exists $\underline{\alpha}\in[0,\overline{\alpha}]$ such that $\alpha_k\geq\underline{\alpha}$ for all $k$, then
\begin{eqnarray}
\sum_{k=1}^\infty\|\Delta_k\|^2 
&\leq& 
\frac{2}{L^2\underline{\alpha}(1-\overline{\alpha})}
\lbr
F(x^1)-F^*
+\frac{\alpha_1}{2\lambda_1}\|\Delta_1\|^2
+\sum_{k=1}^\infty\epsilon_k
\rbr.
\label{co2}
\end{eqnarray}
\end{corollary}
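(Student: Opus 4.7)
Both estimates follow by re-examining the telescoped inequality (\ref{eq:sumfinite}) established inside the proof of Theorem~\ref{thm:fista_lyap}. Under the hypotheses of this corollary, each of the three coefficients appearing on the left-hand side of (\ref{eq:sumfinite}) is non-negative, and the right-hand side is exactly $F(x^1)-F^*+\frac{\alpha_1}{2\lambda_1}\|\Delta_1\|^2+\sum_{k}\epsilon_{k}$, which is precisely the parenthesized quantity on the right of both (\ref{co1}) and (\ref{co2}). So the strategy is simply to discard all but one of the three non-negative squared-norm summand types, then replace its $k$-dependent coefficient by a uniform positive constant, and rearrange.

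For (\ref{co2}), I would retain only the $\|\Delta_k\|^2$ summands. The assumption $\alpha_k\in[\underline\alpha,\overline\alpha]$ gives both $\alpha_k\ge\underline\alpha$ and $1-\alpha_k\ge 1-\overline\alpha$, so $\alpha_k(1-\alpha_k)\ge\underline\alpha(1-\overline\alpha)$; combining this with $\lambda_k\le 1/L$ shows that the coefficient $\frac{L\alpha_k(1-\alpha_k)}{2\lambda_k}$ is uniformly bounded below by $\frac{L^{2}\underline\alpha(1-\overline\alpha)}{2}$. Dividing through and letting $N\to\infty$ yields (\ref{co2}).

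For (\ref{co1}), I would retain only the $\|\Delta_{k+1}\|^2$ summands and uniformly lower-bound its coefficient $\frac{2+\lambda_k L(\alpha_k-1)-\alpha_k-\alpha_{k+1}}{2\lambda_k}$ by a positive constant depending only on $L$ and $\overline\alpha$. The natural manipulation is to rewrite the coefficient as $\frac{2-\alpha_k-\alpha_{k+1}}{2\lambda_k}-\frac{L(1-\alpha_k)}{2}$, then apply $\lambda_k\le 1/L$ and $\alpha_k,\alpha_{k+1}\le\overline\alpha$ to the first piece and $\alpha_k\ge 0$ to the second. After reindexing so that the surviving sum is $\sum_{k\ge 2}\|\Delta_k\|^2$, the missing $\|\Delta_1\|^2$ contribution is absorbed into the already-present $\frac{\alpha_1}{2\lambda_1}\|\Delta_1\|^2$ term on the right-hand side of (\ref{eq:sumfinite}).

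The only real obstacle is arithmetic bookkeeping: one must be careful with signs and with which direction of bound to apply to $\alpha_k$, $\alpha_{k+1}$ and $\lambda_k L$ in order to land on the precise constant $2L(1-\overline\alpha)-1$ quoted in (\ref{co1}). No new analytic ingredient beyond the Lyapunov/telescoping computation already exhibited in Theorem~\ref{thm:fista_lyap} is required.
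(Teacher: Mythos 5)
Your strategy is exactly the paper's: both estimates are read off from the telescoped inequality (\ref{eq:sumfinite}) by discarding all but one family of squared-norm summands and uniformly lower-bounding its coefficient, and your treatment of (\ref{co2}) coincides with the paper's line for line. The problem is with (\ref{co1}). Carrying out your proposed split
\[
\frac{2+\lambda_kL(\alpha_k-1)-\alpha_k-\alpha_{k+1}}{2\lambda_k}
=\frac{2-\alpha_k-\alpha_{k+1}}{2\lambda_k}-\frac{L(1-\alpha_k)}{2}
\]
and applying $\lambda_k\le 1/L$, $\alpha_k,\alpha_{k+1}\le\overline\alpha$ and $\alpha_k\ge 0$ yields the lower bound $L(1-\overline\alpha)-L/2=\tfrac{L}{2}(1-2\overline\alpha)$, \emph{not} $\tfrac{1}{2}\lbr 2L(1-\overline\alpha)-1\rbr$. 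No amount of bookkeeping will recover the stated constant from the correct coefficient: the paper's own derivation of it silently replaces $\lambda_kL(\alpha_k-1)$ by $\lambda_k(\alpha_k-1)$ (one of the errors acknowledged in the title footnote; note the stated constant is even dimensionally inconsistent, comparing $2L(1-\overline\alpha)$ against $1$). So your plan, executed correctly, proves an inequality of the same shape but with constant $\tfrac{2}{L(1-2\overline\alpha)}$, which is moreover only usable when $\overline\alpha<1/2$. A sharper grouping fixes both defects: bound the numerator as $2-\alpha_k-\alpha_{k+1}-\lambda_kL(1-\alpha_k)\ge 2-\alpha_k-\alpha_{k+1}-(1-\alpha_k)=1-\alpha_{k+1}\ge 1-\overline\alpha$, giving the always-positive coefficient $\tfrac{L(1-\overline\alpha)}{2}$ and hence the constant $\tfrac{2}{L(1-\overline\alpha)}$ in place of the one printed in (\ref{co1}).

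Two smaller points. The surviving sum in (\ref{eq:sumfinite}) is $\sum_{k\ge 2}\|\Delta_k\|^2$, and your proposal to absorb the missing $\|\Delta_1\|^2$ into the $\tfrac{\alpha_1}{2\lambda_1}\|\Delta_1\|^2$ term on the right does not work in general (take $\alpha_1=0$); the honest conclusion is a bound on $\sum_{k\ge 2}\|\Delta_k\|^2$, or one must add $\|\Delta_1\|^2$ explicitly to the right-hand side. The paper glosses over the same off-by-one. For (\ref{co2}) no such issue arises, since the retained summands are already indexed as $\|\Delta_k\|^2$ starting from $k=1$.
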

\begin{proof}
Inequality (\ref{eq:sumfinite}) implies
\begin{eqnarray}
\nonumber
F(x^1)-F^*+\frac{\alpha_1}{2\lambda_1}\|\Delta_1\|^2+\sum_{k=1}^\infty\epsilon_k
&\geq&
\sum_{k=1}^\infty
\frac{2+\lambda_k(\alpha_k-1)-\alpha_k-\alpha_{k+1}}{2\lambda_k}
\|\Delta_{k+1}\|^2
\\\label{ppo}
&\geq&
\sum_{k=1}^\infty\frac{2-2\overline{\alpha}-\lambda_k}{2\lambda_k}\|\Delta_{k+1}\|^2
\\\label{ppp}
&\geq&
\sum_{k=1}^\infty\frac{2L(1-\overline{\alpha})-1}{2}\|\Delta_{k+1}\|^2
\end{eqnarray}
 which proves (\ref{co1}). To derive (\ref{ppo}) we used the fact that $0\leq\alpha_k\leq\overline{\alpha}$. To derive (\ref{ppp}) we used the fact that $\lambda L\leq 1$. 
 
 Inequality (\ref{eq:sumfinite}) also implies
 \begin{eqnarray}
 \nonumber
 F(x^1)-F^*+\frac{\alpha_1}{2\lambda_1}\|\Delta_1\|^2+\sum_{k=1}^\infty\epsilon_k
 &\geq&
 \sum_{k=1}^\infty\frac{L\alpha_k(1-\alpha_k)}{2\lambda_k}\|\Delta_k\|^2
 \\\nonumber
 &\geq&
 \sum_{k=1}^\infty
 \frac{L^2\underline{\alpha}(1-\overline{\alpha})}{2}\|\Delta_k\|^2
 \end{eqnarray}
 which proves (\ref{co2}). 
\end{proof}

\section{Convergence Analysis of I-FBS for Sparse Optimization}
\label{sec:l1ls}\label{sec:convFistal1ls}
\subsection{Finite Convergence Results}
\label{sec:fistal1ls_finite}
We now turn our attention to Problem SO. The following theorem proves finite convergence to $0$ for the components in $D$, and finite convergence to the correct sign for the components in $E$ (recall the definitions of $D$ and $E$ in Section \ref{sec:propsparse}). Following the terminology of \cite{liang2014local} we will refer to this as the ``finite manifold identification period". The manifold in this case is the half-space of vectors with support a subset of $E$ and non-zero components with sign $-h_i^*/\rho$.
This theorem generalizes the result of Theorem 4.5 in \cite{Hale:2008} from ISTA to I-FBS. For simplicity, we only consider the case where $\epsilon_k$ is $0$ for all $k$, meaning the proximal operator is computed exactly. Thus the results are stated for I-FBS not I-FBS-$\epsilon$. Note that the proximal operator w.r.t. the $\ell_1$ norm is relatively easy to compute as it is in seperable closed form, thus we do not think it is worth considering I-FBS-$\epsilon$ in this case. In the next subsection, we consider the FISTA-like methods. 

\begin{theorem}
Suppose that Assumption SO holds. Assume $\{\lambda_k\}_{k\in\mathbb{N}}$ is non-decreasing and satisfies $0<\lambda_k\leq1/L$, and there exists $\underline{\alpha},\overline{\alpha}\in[0,1)$ such that $\{\alpha_k\}_{k\in\mathbb{N}}$ satisfies $\underline{\alpha}\leq\alpha_k\leq \overline{\alpha}$ for all $k$. Then, there exist constants $K_D>0$ and $K_E>0$ such that the iterates of I-FBS applied to Problem SO satisfy, for all $k>K_E$,
\begin{eqnarray}
\label{thm:finite:eq2}
\sgn\left(y_i^{k}-\lambda_k\nabla f(y^{k})_i\right)
&=&-\frac{h^*_i}{\rho},\ \forall i \in E,
\end{eqnarray}
and, for all $k>K_D$
\begin{eqnarray}
x^k_i =y^k_i= 0,\ \forall i \in D.
\label{thm:finite:eq1}
\end{eqnarray}
Furthermore, $K_E\leq\overline{K}_E$ and $K_D \leq\overline{K}_D$, where 
\begin{eqnarray}
\overline{K}_E
&\triangleq&
\frac{1}{\rho^2\lambda_1^2}
\left[
\frac{2\overline{\alpha}(1+\overline{\alpha})\left(F(x^1)-F^*+\frac{\alpha_1}{2\lambda_1}\|\Delta_1\|^2\right)}{\underline{\alpha}(1-\overline{\alpha})L^2}
+\|x^1-x^*\|^2-\overline{\alpha}\|x^0-x^*\|^2
\right]
\nonumber\\
&&
+\frac{\underline{\alpha}}{1-\underline{\alpha}}
\label{eq:maxsign}
\end{eqnarray}
and 
\begin{eqnarray}
\overline{K}_D
&\triangleq&
\frac{1}{\omega^2\lambda_1^2}
\left[
\frac{2\overline{\alpha}(1+\overline{\alpha})\left(F(x^1)-F^*+\frac{\alpha_1}{2\lambda_1}\|\Delta_1\|^2\right)}{\underline{\alpha}(1-\overline{\alpha})L^2}
+\|x^1-x^*\|^2-\overline{\alpha}\|x^0-x^*\|^2
\right]
\nonumber\\&&
+\frac{\underline{\alpha}}{1-\underline{\alpha}}+2
\label{eq:maxsupport}
\end{eqnarray}
for any $x^*\in X^*$.

%Where
%\begin{eqnarray*}
%A \triangleq %\|x^0-x^*\|^2+\frac{2\overline{\alpha}}{L(1-\overline{\alpha})}\lbr F(x^0)-F^*\rbr,
%\end{eqnarray*}
%\begin{eqnarray*}
%B \triangleq 
%\lbr
%\frac{3\overline{\alpha}}{L(1-\overline{\alpha})^2}\lbr F(x^0)-F^*\rbr
%+\|x^{-1}-x^*\|^2
%\rbr
%\sqrt{\frac{2\overline{\alpha}}{L(1-\overline{\alpha})}\lbr F(x^0)-F^*\rbr}, 
%\end{eqnarray*}
%$C$ is given by the r.h.s. of (\ref{eq:Mbound}) with $K_{\alpha}=0$, %$\epsilon_k=0$ for all $k$, and $x^*\in X^*$.
\label{thm:finite}
\end{theorem}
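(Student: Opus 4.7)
My plan is to reduce both statements to quantitative bounds on the prox argument $z^{k+1} \triangleq y^{k+1}-\lambda_k\nabla f(y^{k+1})$ and its reference value $z^{*,k}\triangleq x^* - \lambda_k h^*$ for any fixed $x^* \in X^*$. Lemma~\ref{lemma:nonexpansive} applied with $\lambda_k \leq 1/L$ gives $|z_i^{k+1} - z_i^{*,k}| \leq \|y^{k+1}-x^*\|$ for every coordinate. For $i\in D$, Theorem~\ref{thm:constGrad} together with $\mathrm{supp}(x^*)\subseteq E$ forces $x_i^*=0$ and $|h_i^*|\leq \rho-\omega$, so $|z_i^{*,k}|\leq \lambda_k(\rho-\omega)$; hence $\|y^{k+1}-x^*\|\leq \lambda_k\omega$ forces $|z_i^{k+1}|\leq \lambda_k\rho$ and the soft-threshold returns $x_i^{k+1}=S_{\lambda_k\rho}(z_i^{k+1})=0$. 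For $i\in E$, the same theorem uniformly determines $\sgn(x_i^*)=-h_i^*/\rho$ over $X^*$ (the ``opposite'' sign for any $x^*\in X^*$ would contradict Theorem~\ref{thm:constGrad}), so $z_i^{*,k}$ has sign $-h_i^*/\rho$ with $|z_i^{*,k}|\geq \lambda_k\rho$, and the condition $\|y^{k+1}-x^*\|<\lambda_k\rho$ preserves that sign in $z_i^{k+1}$.

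The next step is to count how many iterations can pass before $\|y^{k+1}-x^*\|$ drops below the relevant threshold. For this I will use the quasi-Fej\'er recursion (\ref{eq:recurs2}) from the proof of Theorem~\ref{thm:fista_lyap}: $\theta_{k+1}\leq \alpha_k[\theta_k]_+ + \delta_k$ with $\theta_k = \varphi_k - \varphi_{k-1}$, $\varphi_k = \tfrac12\|x^k-x^*\|^2$, and (since $\epsilon_k=0$ in this setting) $\delta_k$ a multiple of $\|\Delta_k\|^2$. Telescoping this recursion using $\alpha_k\leq \overline{\alpha}<1$ yields a uniform bound on $\varphi_k$, which I combine with the elementary inequality $\|y^{k+1}-x^*\|^2 \leq (1+\overline{\alpha})\|x^k-x^*\|^2 + \overline{\alpha}(1+\overline{\alpha})\|\Delta_k\|^2$. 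Invoking Corollary~\ref{cor1st} (specifically (\ref{co2}), since $\underline{\alpha}>0$) to control $\sum\|\Delta_k\|^2$ explicitly then produces a summable upper bound $\sum_k \|y^{k+1}-x^*\|^2 \leq M$ with $M$ matching the bracketed expression in (\ref{eq:maxsign})--(\ref{eq:maxsupport}). A pigeonhole argument bounds the number of iterations at which $\|y^{k+1}-x^*\|^2 \ge \lambda_1^2\omega^2$ (resp.\ $\lambda_1^2\rho^2$) by $M/(\lambda_1^2\omega^2)$ (resp.\ $M/(\lambda_1^2\rho^2)$), which supplies the leading terms of $\overline{K}_D$ and $\overline{K}_E$.

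The last ingredient is persistence: one must propagate the zero/sign property indefinitely, not merely locate a first good index. For $i\in D$, the inertial update gives $y_i^{k+2} = (1+\alpha_{k+1})x_i^{k+1} - \alpha_{k+1}x_i^k$, which vanishes only once two consecutive iterates $x_i^{k}$, $x_i^{k+1}$ have been zeroed; a short induction using $\underline{\alpha}\leq \alpha_k\leq \overline{\alpha}$ and Lemma~\ref{lemma:nonexpansive} then shows that the zeroing is self-sustaining, and accounts for the additive $\underline{\alpha}/(1-\underline{\alpha})$ correction and the ``$+2$'' offset in (\ref{eq:maxsupport}); the same mechanism with one less iteration of carry-over produces the $\underline{\alpha}/(1-\underline{\alpha})$ term in (\ref{eq:maxsign}). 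The main obstacle will be extracting the displayed constants with the correct coefficients: isolating the combination $\|x^1-x^*\|^2-\overline{\alpha}\|x^0-x^*\|^2$ rather than the coarser $+\overline{\alpha}\|x^0-x^*\|^2$ that a naive bound on $[\theta_k]_+$ would give, and the factor $2\overline{\alpha}(1+\overline{\alpha})/(\underline{\alpha}(1-\overline{\alpha})L^2)$, will require carefully telescoping (\ref{eq:recurs2}) and balancing the two lower bounds in Corollary~\ref{cor1st} between the Fej\'er-type and the inertial-deviation contributions. Once this bookkeeping is carried out, the argument is routine.
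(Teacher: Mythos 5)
Your reduction of the two claims to coordinatewise statements about $z^{k+1}=y^{k+1}-\lambda_k\nabla f(y^{k+1})$ versus $z^{*,k}=x^*-\lambda_k h^*$ is fine and matches the setup of the paper's proof, as does your use of Theorem \ref{thm:constGrad} to pin down $\sgn(z_i^{*,k})$ and the margins $\lambda_k\rho$ and $\lambda_k\omega$. The gap is in the counting step. You propose to bound the number of iterations at which $\|y^{k+1}-x^*\|$ exceeds $\lambda_1\omega$ (resp.\ $\lambda_1\rho$) by a pigeonhole argument based on $\sum_k\|y^{k+1}-x^*\|^2\le M$. That sum is not finite: a uniform bound on $\varphi_k=\tfrac12\|x^k-x^*\|^2$ together with $\sum_k\|\Delta_k\|^2<\infty$ gives only \emph{boundedness} of $\|y^{k+1}-x^*\|$, and since the iterates converge to some minimizer $\hat x$ (possibly $\neq x^*$, and in any case at a rate for which $\|y^{k+1}-\hat x\|^2$ need not be summable), $\sum_k\|y^{k+1}-x^*\|^2=\infty$ in general. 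Consequently your sufficient condition ``$\|y^{k+1}-x^*\|\le\lambda_k\omega$'' may fail at every iteration, and the leading terms of $\overline K_D$ and $\overline K_E$ cannot be extracted this way.

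The ingredient you are missing is the \emph{extra decrease} property of soft thresholding, i.e.\ the second and third bullets of Lemma \ref{lemma:Sv}: if $\sgn(z_i^{k+1})\neq\sgn(z_i^{*,k})$ for some $i\in E$ (or if $S_{\nu_k}(z_i^{k+1})\neq 0$ for some $i\in D$), then $|x_i^{k+1}-x_i^*|^2\le |z_i^{k+1}-z_i^{*,k}|^2-\nu_k^2$ (resp.\ minus $(\nu_k-|z_i^{*,k}|)^2\ge\omega^2\lambda_k^2$), so that after summing over coordinates and applying Lemma \ref{lemma:nonexpansive} one gets $\|x^{k+1}-x^*\|^2\le\|y^{k+1}-x^*\|^2-\nu_1^2$ at every ``bad'' iteration --- a fixed improvement over the plain nonexpansive estimate, with no requirement that $y^{k+1}$ be close to $x^*$. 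Feeding this into the quasi-Fej\'er recursion for $\theta_k=\varphi_k-\varphi_{k-1}$ (your (\ref{eq:recurs2}), with an additional $-\nu_1^2/2$ on bad iterations) and using $\varphi_k\ge0$ shows that each bad iteration depletes a finite budget of size $\varphi_0+O\bigl(\sum_k\|\Delta_k\|^2\bigr)$, which is where the bracketed constants in (\ref{eq:maxsign})--(\ref{eq:maxsupport}) come from via Corollary \ref{cor1st}. Your separate ``persistence'' induction for $i\in D$ is then unnecessary: once the set of iterations with $x_i^{k+1}\neq0$ for some $i\in D$ is confined to indices $\le\overline K_D-2$, all later iterates vanish on $D$, and the ``$+2$'' merely propagates this to $y^k$ through the inertial update. (The paper's own telescoping of the perturbed recursion is acknowledged to contain errors, but the extra-decrease mechanism is essential and cannot be replaced by your summability claim.)
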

\begin{proof}
Note that this parameter choice satisfies the requirements of Theorem \ref{thm:fista_lyap} and Corollary \ref{cor1st}. Furthermore, by assumption, $X^*$ is non-empty and $F^*\geq-\infty$, thus all conclusions of Theorem \ref{thm:fista_lyap} and Corollary \ref{cor1st} hold. Throughout the proof, fix an arbitrary $x^*\in X^*$.

\subsubsection*{Proof of (\ref{thm:finite:eq2})} 
Fix a $\lambda>0$. Recall from Theorem \ref{thm:constGrad} there exists a vector $h^*$ such that $\nabla f(x^*) = h^*$ for all $x^*\in X^*$, and that $\supp (x^*)\subset E$. For $i\in\supp (x^*)$,
\begin{eqnarray}
0\neq x_i^* = \sgn\left(x_i^*-\lambda h^*_i\right)\left[|x_i^*-\lambda h^*_i|-\rho\lambda\right]_+.
\label{eq:fix_point}
\end{eqnarray}
Therefore $|x_i^*-\lambda h^*_i|>\rho\lambda$ for all $i\in\supp(x^*)$. On the other hand, if $i\in E\setminus \supp(x^*)$, then 
\begin{eqnarray*}
|x_i^*-\lambda h^*_i|=\lambda|h_i^*|=\rho\lambda.
\end{eqnarray*}
Therefore 
\begin{eqnarray*}
|x_i^*-\lambda h^*_i|\geq\rho\lambda,\ \forall i \in E.
\end{eqnarray*}
Looking at (\ref{eq:fix_point}) it can be seen that
\begin{eqnarray}
\sgn(x^*_i)=\sgn(x_i^*-\lambda h^*_i),\quad\forall i\in \supp(x^*).
\label{eq:l1ls2}
\end{eqnarray}
 Note by Theorem \ref{thm:constGrad}, if $i\in \supp(x^*)$, then $\sgn(x_i^*)=-h_i^*/\rho$. Else if $i\in E\setminus \supp(x^*)$ then
 \begin{eqnarray}
 \sgn(x_i^*-\lambda h^*_i)= \sgn(-\lambda h_i^*)
                   =-\sgn(h_i^*)
                   = -\frac{h_i^*}{\rho}.\label{eq:l1ls1}
 \end{eqnarray}   
 Combining (\ref{eq:l1ls2}) and (\ref{eq:l1ls1}) yields 
 \begin{eqnarray}
 \sgn(x_i^*-\lambda h^*_i)= -h_i^*/\rho\quad \forall \ i\in E, \lambda>0.
 \label{eq:callback}
 \end{eqnarray}
 
 Let $\nu_k=\rho\lambda_k$. If 
\begin{eqnarray}
\sgn\left(y^{k+1}_i-\lambda_k\nabla f(y^{k+1})_i\right)\neq \sgn(x_i^*-\lambda_k h^*_i)=-h_i^*/\rho\quad\hbox{for some $i\in E$},
\label{eq:sgn}
\end{eqnarray}
then Lemma \ref{lemma:Sv} implies 
\begin{eqnarray}
|x_i^{k+1}-x_i^*|^2&=&\left|S_{\nu_k} (y_i^{k+1}-\lambda_k\nabla f(y^{k+1})_i)-S_{\nu_k} (x_i^*-\lambda_k h^*_i)\right|^2\nonumber
\\\nonumber
&\leq&\left(|y_i^{k+1}-\lambda_k\nabla f(y^{k+1})_i-(x_i^*-\lambda_k h^*_i)|-\nu_k\right)^2\\
&\leq&\left|y_i^{k+1}-\lambda_k\nabla f(y^{k+1})_i-(x_i^*-\lambda_k h^*_i)\right|^2-\nu_k^2 \label{eq:nice}
\label{eq:mid}
\end{eqnarray}
where (\ref{eq:nice}) follows because $$|(y_i^{k+1}-\lambda_k\nabla f(y^{k+1})_i)-(x_i^*-\lambda_k h^*_i)|>|(x_i^*-\lambda_k h^*_i)|\geq\nu_k>0.$$ 
 
 Using (\ref{eq:mid}) we can say the following: Condition (\ref{eq:sgn}) implies that
\begin{eqnarray}
\nonumber\|x^{k+1}-x^*\|^2&=&\sum_{j\neq i}|x^{k+1}_j-x^*|^2 +|x_i^{k+1}-x^*|^2\\\label{eq:l1lsnonx}
&\leq &\sum_{j\neq i}\left|y_j^{k+1}-\lambda_k\nabla f(y^{k+1})_j-(x_j^*-\lambda_k h^*_j)\right|
\nonumber\\&&
+\left|y_i^{k+1}-\lambda_k\nabla f(y^{k+1})_i-(x_i^*-\lambda_k h^*_i)\right|^2-\nu_k^2\\\nonumber
&\leq&\|y^{k+1}-\lambda_k\nabla f(y^{k+1}) - (x^*-\lambda_k h^*)\|^2-\nu_k^2\\                 &\leq&\|y^{k+1}-x^*\|^2-\nu_k^2\label{eq:nonexpand}\\\nonumber
&=&\|x^k+\alpha_k\Delta_{k}-x^*\|^2-\nu_1^2
\\
&=&\|x^k-x^*\|^2+\alpha_k^2\|\Delta_{k}\|^2+2\alpha_k\langle \Delta_{k},x^k-x^*\rangle-\nu_1^2.
\label{eq:thebiggy}
\end{eqnarray}
Inequality (\ref{eq:l1lsnonx}) follows from the element-wise nonexpansiveness of $S_\nu$ along with (\ref{eq:mid}). To deduce (\ref{eq:nonexpand}), we used Lemma \ref{lemma:nonexpansive}. Finally, (\ref{eq:thebiggy}) follows because $\{\lambda_k\}_{k\in\mathbb{N}}$ is non-decreasing and therefore so is $\{\nu_k\}$.

Recall the definition of $\varphi_k \triangleq \frac{1}{2}\|x^k-x^*\|^2$ and $\theta_k\triangleq\varphi_k - \varphi_{k-1}$. Now, moving $\langle\Delta_k,\Delta_{k+1}\rangle$ to the other side of (\ref{eq:mod4}) reveals
\begin{eqnarray}
\langle\Delta_k,x^k - x^*\rangle = \varphi_k - \varphi_{k-1} + \frac{1}{2}\|\Delta_k\|^2.
\label{thebigbiggy}
\end{eqnarray}
 Substituting (\ref{thebigbiggy}) into (\ref{eq:thebiggy}) yields

\begin{eqnarray*}
2(\varphi_{k+1}-\varphi_k)
&\leq&
2\alpha_k(\varphi_{k}-\varphi_{k-1})
+\overline{\alpha}(1+\overline{\alpha})\|\Delta_k\|^2-\nu_1^2,
\label{blah}
\end{eqnarray*}
therefore
\begin{eqnarray*}
\theta_{k+1}
\leq
\alpha_k\theta_k
 + \frac{\overline{\alpha}(1+\overline{\alpha})}{2}\|\Delta_k\|^2 - \frac{\nu_1^2}{2}.
\end{eqnarray*}
Repeating the arguments that led to (\ref{eq:upperonsumtheta}), we can say the following: if (\ref{eq:sgn}) is true then
\begin{eqnarray}
\theta_{k+1}
\leq
\overline{\alpha}^k\theta_1+
\frac{\overline{\alpha}(1+\overline{\alpha})}{2}\sum_{j=1}^k\overline{\alpha}^{k-j}\|\Delta_j\|^2 
-\frac{\nu^2}{2}\sum_{j=1}^k\underline{\alpha}^{k-j}.
\end{eqnarray}
Therefore, for $M\in\mathbb{Z}_+$, if (\ref{eq:sgn}) holds at iteration $M$, then
\begin{eqnarray}
\nonumber
\varphi_M -\varphi_0
&=& 
\sum_{k=1}^M\theta_{k}
\\\nonumber
&\leq&
\frac{\theta_1(1-\overline{\alpha}^M)}{1-\overline{\alpha}}+
\frac{\overline{\alpha}(1+\overline{\alpha})}{2(1-\overline{\alpha})}\sum_{k=1}^M\|\Delta_k\|^2
-\frac{\nu^2}{2}\sum_{k=1}^M\sum_{j=0}^{k-1}\underline{\alpha}^j
\\\label{a}
&\leq&
\frac{\theta_1}{1-\overline{\alpha}}+
\frac{\overline{\alpha}(1+\overline{\alpha})}{2(1-\overline{\alpha})}\sum_{k=1}^M\|\Delta_k\|^2
-\frac{\nu^2}{2}
\lbr 
\frac{M}{1-\underline{\alpha}} - \frac{\underline{\alpha}}{(1-\underline{\alpha})^2}
\rbr.
\end{eqnarray} 
To derive (\ref{a}) we lower bounded the coefficient of $-\frac{\nu^2}{2}$. Since $\varphi_k\geq 0$, if (\ref{eq:sgn}) is true at iteration $k$ then
\begin{eqnarray}
\label{apa}
k&\leq&
\frac{2(1-\underline{\alpha})}{\nu_1^2}
\left[
\frac{\overline{\alpha}(1+\overline{\alpha})}{2(1-\overline{\alpha})}
\sum_{k=1}^\infty\|\Delta_k\|^2
+\frac{\|x^0-x^*\|^2}{2}+\frac{\theta_1}{1-\overline{\alpha}}
\right]
+\frac{\underline{\alpha}}{1-\underline{\alpha}}
\\
&\leq&
\frac{1}{\nu_1^2}
\left[
\frac{2\overline{\alpha}(1+\overline{\alpha})\left(F(x^1)-F^*+\frac{\alpha_1}{2\lambda_1}\|\Delta_1\|^2\right)}{\underline{\alpha}(1-\overline{\alpha})L^2}
+\|x^1-x^*\|^2-\overline{\alpha}\|x^0-x^*\|^2
\right]
\nonumber\\
&&+\frac{\underline{\alpha}}{1-\underline{\alpha}}
\label{aad}
\end{eqnarray} 
To derive ({\ref{aad}}) we used the upper bound on $\sum_k\|\Delta_k\|^2$ in (\ref{co2}) from Corollary \ref{cor1st}. This upper bound is tighter than the other upper bound for $\sum_k \|\Delta_k\|^2$ given in (\ref{co1}), so long as $L>2/\underline{\alpha}$.
%Therefore $\|\Delta_i\|^2 = o(1/i)$. Therefore $\|\Delta_i\| = o(1/\sqrt{i})$. Now, by using 
%$$\sum_{i=1}^k \frac{1}{\sqrt{i}}\leq\int_{0}^{k}\frac{1}{\sqrt{i}}di=2\sqrt{k},$$
%we can infer that
%\begin{eqnarray}
%\sum_{i=1}^k 2\overline{\alpha}M\|\Delta_i\|=o(\sqrt{k}).
%\label{eq:l1ls5}
%\end{eqnarray}
 %Furthermore since $\beta_k\|\Delta_{k}\|\leq\sqrt{M_2}$ for all $k$
%\begin{eqnarray}
%\sum_{j=1}^k \beta_j\|x^j - x^{j-1}\|\leq \sqrt{M_2 k}
%\end{eqnarray}
%for all $k$. 

%where $$c_1 \triangleq \|x^0-x^*\|^2
%+\frac{2}{L(1-\overline{\alpha})}\lbr F(x^0)-F^*\rbr $$
%and 
%$$c_2 \triangleq \frac{2 M}{L(1-\overline{\alpha})}\lbr F(x^0)-F^*\rbr.
%$$ 
%Therefore for all $\epsilon>0$ there exists a $K_1(\epsilon)$ such that for all $k>K_1(\epsilon)$, if (\ref{eq:sgn}) is true then 
%\begin{eqnarray*}
%\|x^{k+1}-x^*\|^2
%\leq 
%\|x^0-x^*\|^2
%+\frac{2\overline{\alpha}^2}{L(1-\overline{\alpha})}\lbr F(x^0)-F^*\rbr
%-k(\nu_1^2-\epsilon).
%\end{eqnarray*} 
%Fix $\epsilon$ in $(0,\nu^2)$. We now see that (\ref{eq:sgn}) implies
%\begin{eqnarray}
%k<\max\left(\frac{\|x^0-x^*\|^2+\frac{2\overline{\alpha}^2}{L(1-\overline{\alpha})}\lbr F(x^0)-F^*\rbr}{\nu_1^2-\epsilon},K_1(\epsilon)\right)<\infty.
%\label{eq:max_sgn}
%\end{eqnarray}
\subsubsection*{Proof of (\ref{thm:finite:eq1})} 
 Recall the definition of $\omega$ and note that 
 \begin{eqnarray}
 \lambda_k\omega &=&\min\{\nu_k-|x_i^* - \lambda_k h^*_i|:i\in D\}>0
 \label{mm}
 \end{eqnarray}
Consider $i\in D$ (which implies $i\notin\supp(x^*)$). If $x_i^{k+1}\neq0$, then Lemma \ref{lemma:Sv} implies
\begin{eqnarray}
\nonumber
|x_i^{k+1}|^2
&=& 
|S_{\nu_{k}}\lbr y_i^{k+1}-\lambda_k\nabla f(y^{k+1})_i\rbr 
- S_{\nu_{k}}\lbr x^*_i - \lambda_k h_i^*\rbr|^2
\\\nonumber
&\leq&
\left[
|(y_i^{k+1}-\lambda_k\nabla f(y^{k+1}_i))-(x_i^*-\lambda_k h^*_i)|
-\lbr\nu_k - | x_i^* - \lambda_k h_i^*|\rbr
\right]^2
\\\label{mnn}
&\leq&
|(y_i^{k+1}-\lambda_k\nabla f(y^{k+1}_i))-(x_i^*-\lambda_k h^*_i)|^2
-\lbr\nu_k - | x_i^* - \lambda_k h_i^*|\rbr^2
\\\label{mmn}
&\leq&
|(y_i^{k+1}-\lambda_k\nabla f(y^{k+1}_i))-(x_i^*-\lambda_k h^*_i)|^2
- \omega^2\lambda^2_k.
\end{eqnarray}
To derive (\ref{mnn}) we used the fact that $$|(y_i^{k+1}-\lambda_k\nabla f(y^{k+1}_i))+\lambda_k h^*_i|>\nu_k - \lambda_k| h_i^*|.$$ To derive (\ref{mmn}) we used (\ref{mm}).  Repeating the arguments used to prove (\ref{eq:thebiggy}) we can say the following. If there exists $i\in D$, such that $x_i^{k+1}\neq 0$, then $k\leq \overline{K}_{D}-2$, with $\overline{K}_D$ defined in (\ref{eq:maxsupport}). Therefore $|x_i^k|=0$ for all $k>\overline{K}_D-2$. Since $y_i^{k+1}=x_i^k+\alpha_k(x_i^k-x_i^{k-1})$, $y_i^k=0$ for all $i\in D$ and $k>\overline{K}_D-2+2=\overline{K}_D$, which proves (\ref{thm:finite:eq1}).
\end{proof}
%If $\omega=0$, the value for $K_D$ in (\ref{eq:maxsupport}) is $+\infty$. In this case we must prove (\ref{thm:finite:eq1}) in the following way. By Theorem \ref{thm:fista_lyap}, there exists $\hat{x}\in X^*$ such that $\|x^k-\hat{x}\|\to 0$, therefore $|x_i^k - \hat{x}|\to 0$. Consider $i\in D$ which implies $i\notin \supp(\hat{x})$. Then 
%\begin{eqnarray*}
%x_i^{k+1}=S_{\lambda\rho}\left(y_i^{k+1}-\lambda_k\nabla f(y^{k+1})_i\right).
%\end{eqnarray*}
%Since $x_i^k\to 0$, for all $\epsilon>0$ there exists a $K_D$ such that $|x_i^k|<\epsilon$ for all $k>K_D$. Choose $\epsilon$ such that $$0<\epsilon<\rho\lambda_1,$$ which implies $\epsilon<\rho\lambda_k$ for all $k>1$. Therefore $|x_i^k|=0$ for all $k>K_{2}$. Since $D\subseteq \supp(\hat{x})^c$, equation (\ref{thm:finite:eq1}) is proved, albeit without an explicit estimate of $K_D$.

%\begin{remark}
Note that if  $x_i^{k+1}\neq 0$ then $\sgn(x_i^{k+1})=y_i^{k}-\lambda_k\nabla f(y^{k})_i$, thus (\ref{thm:finite:eq2}) implies convergence in sign. We can recover the result by Hale et al. for ISTA (Theorem 4.5 \cite{Hale:2008}). To see this, consider (\ref{apa}) with $\overline{\alpha}=\underline{\alpha}=0$ and then use the upper bound on $\sum_k\|\Delta_k\|^2$ given in (\ref{co1}) of Corollary \ref{cor1st}. Note that we defined the constant $\omega$ in a slightly differently way to Hale et al.
\subsection{Finite Convergence of FISTA}
We can prove convergence to the optimal manifold in a finite number of iterations under more general conditions than required in Theorem \ref{thm:finite}, however without explicit bounds on the number of iterations. A corollary of the following theorem is that the FISTA-like choices proposed by Beck and Teboulle \cite{Beck:2009} and Chambolle and Dossal \cite{chambolle2014weak} achieve finite manifold identification. 
\begin{theorem}
Suppose that Assumption SO holds. Assume $\{\lambda_k\}_{k\in\mathbb{N}}$ is non-decreasing and satisfies $0<\lambda_k< 2/L$ for all $k$, and there exists $\overline{\alpha}\geq 0 $ such that $\{\alpha_k\}_{k\in\mathbb{N}}$ satisfies $0\leq\alpha_k\leq \overline{\alpha}$, for all $k$. If, for the iterates of I-FBS applied to Problem SO, it is true that $\sum_{k=1}^\infty\|\Delta_k\|^2<\infty$ and $\|x^k-x^*\|$ is bounded for some $x^*\in X^*$ and for all $k$, then there exists a constant $K>0$ such that  for the iterates of I-FBS (\ref{thm:finite:eq2}) and (\ref{thm:finite:eq1}) hold for all $k>K$. 
\label{cor:forchamb}
\end{theorem}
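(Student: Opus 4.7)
The plan is to adapt the argument of Theorem \ref{thm:finite}, trading the explicit iteration bounds for a contradiction argument that exploits the two standing hypotheses $\sum_k\|\Delta_k\|^2<\infty$ and $B\triangleq\sup_k\|x^k-x^*\|<\infty$. Define the ``bad'' index sets
\begin{eqnarray*}
S&\triangleq&\{k\in\mathbb{N}:\exists\,i\in E,\ \sgn(y_i^{k+1}-\lambda_k\nabla f(y^{k+1})_i)\neq -h_i^*/\rho\},\\
T&\triangleq&\{k\in\mathbb{N}:\exists\,i\in D,\ x_i^{k+1}\neq 0\}.
\end{eqnarray*}
Establishing (\ref{thm:finite:eq2}) and (\ref{thm:finite:eq1}) from some finite $K$ onward is equivalent to showing $|S|<\infty$ and $|T|<\infty$, so the proof will be organised around these two claims.

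First I would re-derive the key inequality (\ref{eq:thebiggy}) exactly as in Theorem \ref{thm:finite}: the only ingredient that is really used is nonexpansiveness of the forward-backward operator (Lemma \ref{lemma:nonexpansive}), which holds on the full range $\lambda_k\in(0,2/L)$ assumed here. For $k\in T$, the same computation with the third bullet of Lemma \ref{lemma:Sv} in place of the second gives an analogous bound in which $\nu_k^2$ is replaced by $\omega^2\lambda_k^2$. Setting $\psi_k\triangleq\|x^k-x^*\|^2$, expanding $\|y^{k+1}-x^*\|^2$ in terms of $x^k-x^*$ and $\alpha_k\Delta_k$, and substituting (\ref{thebigbiggy}) for $\langle\Delta_k,x^k-x^*\rangle$, I would collapse everything into the second-order recursion
\begin{eqnarray*}
\psi_{k+1}-\psi_k-\alpha_k(\psi_k-\psi_{k-1})
&\leq&
\alpha_k(1+\alpha_k)\|\Delta_k\|^2
\\
&&-\,\nu_1^2\,\mathbf{1}_{\{k\in S\}}
-\omega^2\lambda_1^2\,\mathbf{1}_{\{k\in T\}},
\end{eqnarray*}
valid for every $k\geq 1$.

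Summing this recursion from $k=1$ to $N$ and applying Abel summation,
\begin{eqnarray*}
\sum_{k=1}^N\alpha_k(\psi_k-\psi_{k-1})=\alpha_N\psi_N-\alpha_1\psi_0+\sum_{k=1}^{N-1}(\alpha_k-\alpha_{k+1})\psi_k,
\end{eqnarray*}
the boundary terms are controlled by $\overline{\alpha}B^2$. For the FISTA-like choices highlighted in the subsequent corollary, $\{\alpha_k\}$ is non-decreasing, so $\alpha_k-\alpha_{k+1}\leq 0$ and (since $\psi_k\geq 0$) the residual sum is non-positive. Together with the hypothesis $\overline{\alpha}(1+\overline{\alpha})\sum_k\|\Delta_k\|^2<\infty$, the telescoped inequality then reduces to
\begin{eqnarray*}
0\leq\psi_{N+1}\leq C-\nu_1^2|S\cap[1,N]|-\omega^2\lambda_1^2|T\cap[1,N]|
\end{eqnarray*}
for a constant $C$ independent of $N$, which forces $|S|<\infty$ and $|T|<\infty$; setting $K=\max(S\cup T)$ gives the conclusion.

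The delicate point is the Abel-summation step: without monotonicity (or at least bounded variation) of $\{\alpha_k\}$, the sum $\sum_{k=1}^{N-1}(\alpha_k-\alpha_{k+1})\psi_k$ need not be bounded uniformly in $N$, and a more subtle argument would be needed for an arbitrary momentum sequence. Such a monotonicity condition is not explicit in the hypothesis set of the theorem, but it does hold for every FISTA-like parameter choice reviewed in Section \ref{sec:knownfistl1ls}, which is the intended application of the result.
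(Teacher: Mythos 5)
Your route is genuinely different from the paper's, and the difference is exactly where your gap sits. The paper never forms the second-order recursion in $\psi_{k+1}-\psi_k-\alpha_k(\psi_k-\psi_{k-1})$: instead of substituting (\ref{thebigbiggy}) for the cross term $\langle\Delta_k,x^k-x^*\rangle$, it bounds that term directly by Cauchy--Schwarz against the assumed bound $M_1$ on $\|x^k-x^*\|$ (inequalities (\ref{afs})--(\ref{afss})), telescopes the resulting \emph{first-order} recursion, and controls $\sum_{i\le k}\|\Delta_i\|$ via Jensen as $\sqrt{M_2 k}$ (inequality (\ref{asfas})). That removes any need for Abel summation and hence any monotonicity of $\{\alpha_k\}$ --- which matters, because the theorem's hypotheses only require $0\le\alpha_k\le\overline{\alpha}$, with no monotonicity or bounded-variation condition. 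As you yourself concede, your term $\sum_{k}(\alpha_k-\alpha_{k+1})\psi_k$ is uncontrolled in general, so your argument does not prove the theorem as stated; it proves it only under an extra hypothesis (non-decreasing $\alpha_k$) that happens to hold for the FISTA-like sequences but is not part of the statement. That is the concrete gap, and it is not cosmetic: the whole point of this theorem relative to Theorem \ref{thm:finite} is to cover parameter regimes ($\alpha_k$ possibly reaching $1$, $\lambda_k$ up to $2/L$) where only the summability and boundedness hypotheses are available.

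That said, your bookkeeping with the indicator sets $S$ and $T$ is more careful than the paper's on a different point, and this is worth noting because the authors themselves flag this proof as erroneous (see the footnote to the title). The paper's inequality (\ref{bigggg}) accrues a deficit $-k\nu_1^2$ after $k$ iterations, but the $-\nu_1^2$ term in (\ref{afss}) is only available at iterations where (\ref{eq:sgn}) actually holds, so the correctly telescoped deficit is $-\nu_1^2\,|S\cap[1,k]|$; weighed against the $O(\sqrt{k})$ growth of $2\overline{\alpha}M_1\sqrt{M_2 k}$ this only yields $|S\cap[1,k]|=O(\sqrt{k})$, which does not force $|S|<\infty$. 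Your version, when its Abel step is legitimate, produces a right-hand side bounded uniformly in $N$ and therefore genuinely forces $|S|$ and $|T|$ to be finite. So under non-decreasing $\alpha_k$ your argument is the sound one of the two; for arbitrary bounded $\alpha_k$ neither argument closes as written, and an additional idea is required (for instance, first establishing convergence of $x^k$ and hence of $y^k-\lambda_k\nabla f(y^k)$ to a point whose coordinates on $E$ are bounded away from zero after shifting by $\lambda h^*$, and strictly inside the threshold on $D$), which is essentially what the authors' revised manuscript supplies.
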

\begin{proof}
Inequality (\ref{eq:thebiggy}) and the equivalent recursion for when (\ref{eq:sgn}) holds can be proved in exactly the same way. However, we cannot rely on $\overline{\alpha}<1$, so we have to modify the proof from that point onwards. Once again, fix $x^*\in X^*$. Rewriting (\ref{eq:thebiggy}), we can say that (\ref{eq:sgn})  implies that
\begin{eqnarray}
\nonumber
\|x^{k+1}-x^*\|^2
&\leq&\|x^k-x^*\|^2+\overline{\alpha}^2\|\Delta_{k}\|^2+2\overline{\alpha}\langle \Delta_{k},x^k-x^*\rangle-\nu_1^2
\\\label{afs}
&\leq&\|x^k-x^*\|^2+\overline{\alpha}^2\|\Delta_{k}\|^2+2\overline{\alpha}\|\Delta_{k}\|\|x^k-x^*\|-\nu_1^2
\\\label{afss}
&\leq&\|x^k-x^*\|^2+\overline{\alpha}^2\|\Delta_{k}\|^2+2\overline{\alpha}M_1\|\Delta_{k}\|-\nu_1^2
\end{eqnarray}

 where we used the Cauchy-Schwarz inequality to get (\ref{afs}). To derive (\ref{afss}) we used the assumption that there exists $M_1>0$ such that $\|x^k-x^*\|<M_1$. Also by assumption, there exists $M_2>0$ such that
\begin{eqnarray}
\sum_{k=1}^\infty\|\Delta_k\|^2<M_2.
\label{asas}
\end{eqnarray}
Now, by Jensen's inequality
\begin{eqnarray}
\nonumber
\sum_{i=0}^k\|\Delta_i\|
&\leq&  
\sqrt{k\sum_{i=0}^k\|\Delta_i\|^2}
\\
&\leq&
\sqrt{M_2 k}.
\label{asfas}
\end{eqnarray}
Substituting (\ref{asas}) and (\ref{asfas}) into (\ref{afss}) yields the following: If (\ref{eq:sgn}) is true then
\begin{eqnarray}
\|x^{k+1}-x^*\|^2
&\leq&
\|x^0-x^*\|^2+\overline{\alpha}^2M_2+2\overline{\alpha}M_1\sqrt{M_2 k}-k\nu_1^2
\label{bigggg}
\end{eqnarray}
The r.h.s. of (\ref{bigggg}) can be non-negative for only a finite number of iterations, which proves (\ref{thm:finite:eq2}). 

For (\ref{thm:finite:eq1}), the recursion is 
\begin{eqnarray*}
\|x^{k+1}-x^*\|^2
&\leq&\|x^k-x^*\|^2+\overline{\alpha}^2\|\Delta_{k}\|^2+2\overline{\alpha}\langle \Delta_{k},x^k-x^*\rangle-\omega^2\lambda_1^2
\end{eqnarray*}
and the reasoning is the same from this point onwards. 
\end{proof}
%\begin{remark}

The classical FISTA parameter choice in (\ref{eq:fista_mo2}) due to Beck and Teboulle \cite{Beck:2009}, along with others which guarantee the $O(1/k^2)$ rate provided by Tseng \cite{tseng2008accelerated}, satisfy the assumptions of Theorem \ref{cor:forchamb}, so long as $F$ is coercive (or equivalently has bounded level-sets). The first condition, $\sum_k\|\Delta_k\|^2<\infty$ can be shown by considering the following facts. The sequence $b_k$ defined on page 196 of \cite{Beck:2009} is bounded, which implies the sequence $u_k$ defined on page 194 of \cite{Beck:2009} is also bounded. If $F$ is coercive, then $x^k$ is bounded, since $F(x^k)\to F^*$. This implies $\|x^k-x^*\|$ is bounded for some $x^*\in X^*$. It also implies $t_k\Delta_k$ is bounded and since $t_k=O(k)$, $\|\Delta_k\|=O(1/k)$, and the result follows\footnote{We thank Antonin Chambolle and Charles Dossal for pointing this out to us.}.

 The parameter choice (\ref{eq:fista_mo3}) due to Chambolle and Dossal \cite{chambolle2014weak} satisfies the assumptions of this theorem, even when $F$ is not assumed to be coercive. $\sum_k\|\Delta_k\|^2$ is finite by Corollary 2 of \cite{chambolle2014weak} and $\|x^k-x^*\|$ is shown to be bounded for all $x^*\in X^*$ in the proof of Theorem 3 of \cite{chambolle2014weak}. (In fact Chambolle and Dossal proved that $\sum_k k\|\Delta_k\|^2$ is finite for their parameter choice.)

%\end{remark}
\subsection{Finite Reduction to Local Minimization}
Theorems \ref{thm:finite} and \ref{cor:forchamb} allow us to characterize the behavior of I-FBS (including the FISTA-like choices) after a finite manifold identification period. In the following corollary, we show that after a finite number of iterations, I-FBS reduces to minimizing a smooth function over $E$ subject to an orthant constraint. The following corollary generalizes the result of Corollary 4.6 in \cite{Hale:2008} from ISTA to I-FBS.

\begin{corollary}
\label{cor:q_lin}
 Suppose that Assumption SO holds. Assume $\{\lambda_k\}_{k\in\mathbb{N}}$ is non-decreasing and satisfies $0<\lambda_k\leq1/L$, and there exists $\underline{\alpha},\overline{\alpha}\in[0,1)$ such that $\underline{\alpha}\leq\overline{\alpha}$ and $\{\alpha_k\}_{k\in\mathbb{N}}$ satisfies $\underline{\alpha}\leq\alpha_k\leq \overline{\alpha}$ for all $k$. Then, after finitely many iterations, the iterates of I-FBS applied to Problem SO become equivalent to the iterates of I-FBS applied to minimizing $\phi:\mathbb{R}^{|E|}\to\mathbb{R}$, where
\begin{eqnarray}
\phi(x_E)&\triangleq&-(h_E^*)^T x_E+f\left((x_E,0)\right),
\label{eq:phidef}
\end{eqnarray}
constrained to the orthant $O_E$, where
\begin{eqnarray}
O_E&\triangleq&\{x_E\in\mathbb{R}^{|E|}:-\sgn(h_i^*)\, x_i\geq 0,\ \forall i\in E\}.
\label{O_E_def}
\end{eqnarray}
Specifically, there exists $K>0$ such that for all $k>K$, 
\begin{eqnarray}
\label{eq:proj_HB1}
y_E^{k+1} &=& x_E^k+\alpha_k(x_E^k-x_E^{k-1})\\
x_E^{k+1}&=& P_{O_E}\lbr y_E^{k+1}-\lambda_k\nabla\phi(y_E^{k+1})\rbr,
\label{eq:proj_HB2}
\end{eqnarray}
$x_D^k=y_D^k=0$, and $F(x^k) = \phi(x_E^k)$. Furthermore $K\leq\max\{\overline{K}_D, \overline{K}_E\}$, with $\overline{K}_D$ and $\overline{K}_E$ defined in (\ref{eq:maxsign}) and (\ref{eq:maxsupport}). 

Alternatively, if the conditions of Theorem \ref{cor:forchamb} hold, then there exists $K'>0$ such that (\ref{eq:proj_HB1})-(\ref{eq:proj_HB2}) hold, $x_D^k=y_D^k=0$, and $F(x^k) = \phi(x_E^k)$, for all $k>K'$.  
\end{corollary}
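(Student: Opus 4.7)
The plan is to invoke Theorem~\ref{thm:finite} (or Theorem~\ref{cor:forchamb} for the alternative statement) to fix $K \leq \max\{\overline{K}_D, \overline{K}_E\}$ beyond which, for all $k > K$, one has simultaneously $x_D^k = y_D^k = 0$ and, for every $i \in E$, $\sgn(y_i^{k+1} - \lambda_k \nabla f(y^{k+1})_i) = -h_i^*/\rho$. From there I will verify coordinate-by-coordinate that the full I-FBS update collapses to the projected scheme (\ref{eq:proj_HB1})--(\ref{eq:proj_HB2}).

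The inertial step restricted to the $E$-block gives (\ref{eq:proj_HB1}) at once, while its $D$-block vanishes because $x_D^k = x_D^{k-1} = 0$. For the proximal step, Theorem~\ref{thm:finite} directly yields $x_i^{k+1} = 0$ for $i \in D$. For $i \in E$, set $s_i := -\sgn(h_i^*) \in \{\pm 1\}$ (well defined since $|h_i^*| = \rho > 0$) and $z_i := y_i^{k+1} - \lambda_k \nabla f(y^{k+1})_i$. Because $\sgn(z_i) = s_i$, the elementwise soft-threshold collapses to a one-sided threshold, $S_{\rho\lambda_k}(z_i) = s_i\,[s_i z_i - \rho\lambda_k]_+$. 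Using $\rho = |h_i^*| = -s_i h_i^*$, this rewrites as $s_i\,[s_i(z_i + \lambda_k h_i^*)]_+$, which is exactly the projection $P_{\{y:\, s_i y \geq 0\}}(z_i + \lambda_k h_i^*)$. Since $y_D^{k+1} = 0$, one has $\nabla f(y^{k+1})_i = [\nabla f((y_E^{k+1}, 0))]_i$ for $i \in E$, so the definition of $\phi$ in (\ref{eq:phidef}) gives $\nabla f(y^{k+1})_i - h_i^* = \partial_i \phi(y_E^{k+1})$. Stacking these coordinate identities yields $x_E^{k+1} = P_{O_E}(y_E^{k+1} - \lambda_k \nabla \phi(y_E^{k+1}))$.

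It remains to identify the objective values. After the reduction every iterate has $x_D^k = 0$ and $x_E^k \in O_E$, so for each $i \in E$, $-h_i^* x_i^k = |h_i^*|\,|x_i^k| = \rho|x_i^k|$; combined with $x_D^k = 0$ this gives $\rho \|x^k\|_1 = \rho \sum_{i \in E}|x_i^k| = -(h_E^*)^T x_E^k$, and hence $F(x^k) = f((x_E^k, 0)) + \rho\|x^k\|_1 = \phi(x_E^k)$. The alternative statement under the hypotheses of Theorem~\ref{cor:forchamb} follows by exactly the same reduction, dropping only the explicit bound on $K$. The one nontrivial piece is the sign bookkeeping that converts the symmetric soft-threshold into a one-sided orthant projection in both sign regimes of $h_i^*$; once Theorems~\ref{thm:finite} and~\ref{cor:forchamb} are granted, the rest of the argument is a direct algebraic manipulation.
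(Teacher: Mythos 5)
Your proposal is correct and follows essentially the same route as the paper: invoke Theorem \ref{thm:finite} (resp.\ Theorem \ref{cor:forchamb}) for the finite identification, use the sign condition to collapse the soft-threshold on the $E$-coordinates into a one-sided threshold, identify $\nabla f(y^{k+1})_i - h_i^*$ with $\partial_i\phi(y_E^{k+1})$, and recover $F(x^k)=\phi(x_E^k)$ from $-h_i^*x_i^k=\rho|x_i^k|$. The only cosmetic difference is that you phrase the projection via $s_i[s_i(\cdot)]_+$ where the paper does an explicit case analysis on the quantity $z_i^{k+1}$; the algebra is the same.
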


\begin{proof}
From Theorem \ref{thm:finite}, there exists a $K$ such that for all $k>K$, (\ref{thm:finite:eq2}) and (\ref{thm:finite:eq1}) hold and  $K\leq\max\{\overline{K}_D, \overline{K}_E\}$. Take $k>K$. Since $x_i^k=0$ for all $i\in D$ it suffices to consider $i\in E$. For $i\in E$, $k>K$, using (\ref{thm:finite:eq2}) we have 
\begin{eqnarray}
x_i^k\geq 0\hbox{ if }\sgn\left(y_i^{k+1}-\lambda_k\nabla f(y^{k+1})_i\right)=1\hbox{ equivalently }h_i^*<0\nonumber
\end{eqnarray}
 and 
\begin{eqnarray}
x_i^k\leq 0\hbox{ if }\sgn\left(y_i^{k+1}-\lambda_k\nabla f(y^{k+1})_i\right)=-1\hbox{ equivalently }h_i^*>0.\nonumber
\end{eqnarray}
Therefore for any $i\in E$, $-h_i^*x_i^k\geq 0$, thus $x^k_E\in O_E$ for all $k>K$. Next note that, for all $i\in E$,
$
-h_i^* x^k_i
=
\rho|x_i|.
$
Therefore for $k>K$,$-(h^*)^Tx_E^k = \rho\|x^k\|_1$, thus $F(x^k)=\phi(x^k_E)$.

Now for $i\in E$, $k>K$, we calculate the quantity
\begin{eqnarray*}
z_i^{k+1}&\triangleq& y_i^{k+1}-\lambda_k\nabla\phi(y_E^{k+1})_i\\
&=& y_i^{k+1}-\lambda_k(-h^*_i+\nabla f(y^{k+1})_i)\\
&=& y_i^{k+1}-\lambda_k \nabla f(y^{k+1})_i+\rho\lambda_k(\frac{h_i^*}{\rho})\\
&=&\sgn\left(y_i^{k+1}-\lambda_k\nabla f(y^{k+1})_i\right)(|y_i^{k+1}-\lambda_k \nabla f(y^{k+1})_i|-\rho\lambda_k).
\end{eqnarray*}
Therefore, for $i\in E$, $k>K$,
\begin{eqnarray*}
y_i^{k+1} &=& x_i^k+\alpha_k(x_i^k - x_i^{k-1}),\\
x_i^{k+1}&=& S_{\rho\lambda_k}\left(y_i^{k+1}-\lambda_k \nabla f(y^{k+1})_i\right)=
\left\{
     \begin{array}{lr}
       z_i^{k+1}: -h_i^*z_i^{k+1}\geq 0 \\
       0  : \text{else}
     \end{array}
   \right.
\end{eqnarray*}
Equivalently, for $k>K$,
\begin{eqnarray*}
y_E^{k+1} &=& x_E^k+\alpha_k(x_E^k - x_E^{k-1}),\\
x_E^{k+1}&=& P_{O_E}(y_E^{k+1}-\lambda_k \nabla \phi(y_E^{k+1})).
\end{eqnarray*}

Due to Theorem \ref{cor:forchamb}, the same arguments hold for parameter choices such that $\sum_k\|\Delta_k\|^2$ is finite, $\|x^k-x^*\|$ is bounded for all $k$ and some $x^*\in X^*$, and $\alpha_k$ is bounded. However there is no explicit upper bound on $K$. 

%Assume $\omega>0$ which implies $|x_i^*|>0$ for all $i\in E$ and $x^*\in X^*$. Fix such an $x^*$. Assume for some $k>K$, $x_i^{k+1}=y_i^{k+1}=0$. Then Lemma \ref{lemma:Sv} implies 
%\begin{eqnarray}
%\nonumber
%|x_i^* - x_i^{k+1}| &=& |S_{\nu_k}(x^*_i - \lambda_k h_i^*) 
%- S_{\nu_k}(y_i^{k+1} - \lambda\nabla f(y^{k+1})_i)|
%\\\nonumber
%&\leq&
%|x^*_i - \lambda_k h_i^* 
%-\lbr y_i^{k+1} - \lambda\nabla f(y^{k+1})_i\rbr|
% - (\nu - |y_i^{k+1} - \lambda\nabla f(y^{k+1})_i|) 
%\end{eqnarray}
%Equations (\ref{eq:projFISTA1})-(\ref{eq:projFISTA2}) are the iterates of FISTA applied to minimize $\phi(v)$ subject to the constraint, $v\in O_E$.
%Since, by Theorem \ref{thm:fista_lyap}, FISTA converges to some $\hat{x}$ that is a solution to Problem $\ell_1$-LS, there exists a constant $K'>0$ such that for all $k>K'$, $|x_i|>0$ and $x^k_j=0$ for all $i\in\supp(\hat{x})$ and $j\notin \supp(\hat{x})$. This implies that for $k>K'$ and  $i\in\supp(\hat{x})=\supp(y^k)=\supp(x^k)$, the projection in (\ref{eq:projFISTA2}) is simply an identity.
\end{proof}
%\begin{remark}

In principle one could switch to minimizing $\phi$ directly once the algorithm has reduced to (\ref{eq:proj_HB1})-(\ref{eq:proj_HB2}). This would allow for a larger step-size, since the Lipschitz constant of $\nabla\phi$ is less than $L$. However it is not possible to know with certainty that the algorithm has transitioned to the form (\ref{eq:proj_HB1})-(\ref{eq:fista_mo3}) unless the number of iterations exceeds the upper bound $\max\{\overline{K}_D,\overline{K}_E\}$, although we discuss some heuristics for identifying this transition in Section \ref{sec:underD}. The main drawback of this strategy is that once it switches to minimizing $\phi$ directly the support of $x^k$ is fixed. Therefore any mismatch between $\supp(x^k)$ and $\supp(x^*)$ is not identified and the algorithm will not necessarily converge to an optimal point. In the next section we discuss a method which uses the optimal momentum for minimizing $\phi$ yet continues to use a smaller step-size and is therefore guaranteed to converge to a minimizer. % This method is less sensitive to a mismatch between the current support set and the true $E$.
%Corollary \ref{cor:q_lin} applies to the bounded modification of Beck and Teboulle's choice in (\ref{eq:fista_mo2}).
%\end{remark}

%The authors of \cite{o2012adaptive} proposed an adaptive momentum restart scheme for FISTA in the case of smooth optimization (i.e. $g(x)=0$) which uses Beck and Teboulle's choice but resets $t_k$ to $0$ every time the objective function increases. Theorem \ref{thm:finite} and Corollary \ref{cor:q_lin} confirm a conjecture of \cite{o2012adaptive} that the adaptive restart scheme can be applied when FISTA is applied to Problem $\ell_1$-LS. (The analysis of \cite{o2012adaptive} only holds for quadratic functions). Since there exists a $K>0$ such that for $k>K$ FISTA is minimizing the quadratic $\phi$, the results of \cite{o2012adaptive} hold for $k>K$.

\subsection{A Simple Locally Optimal Parameter Choice for I-FBS}
\label{sec:paramsfist}
The analysis of the previous three sections shows that, after a finite number of iterations, I-FBS (subject to parameter conditions) reduces to minimizing the function $\phi$ subject to an orthant constraint. Even though $f$ is not assumed to be strongly convex, $\phi$ might be. If this function is strongly convex, the asymptotic rate of convergence can be determined by the worst-case condition number of the Hessian. Throughout this section let  Let $H_{EE}(v)$ be the Hessian of $\phi$ evaluated at $v$. In terms of strategies for choosing $\{\alpha_k\}_{k\in\mathbb{N}}$ and $\{\lambda_k\}_{k\in\mathbb{N}}$, one approach is to choose them to obtain the best iteration complexity for minimizing $\phi$. In the following Corollary, we provide a simple fixed choice which does this and thus optimizes the asymptotic iteration complexity.

\begin{corollary}
Suppose that Assumption SO holds, and $\phi$ is strongly convex. Let $x^*$ be the unique minimizer of Problem SO and $l_E$ be the strong convexity parameter of $\phi$. If $\lambda\in(0,1/L]$,
\begin{eqnarray}
\lambda_k=\lambda\hbox{ and } \alpha_k=\frac{1-\sqrt{l_E\lambda}}{1+\sqrt{l_E\lambda}}\quad\forall k\in\mathbb{Z}_+,
\label{eq:fistamo}
\end{eqnarray}
 then the iterates $\{x^k\}_{k\in\mathbb{N}}$ of I-FBS converge to $x^*$ linearly and $F(x^k)$ converges to $F^*$ linearly. Indeed 
 \begin{eqnarray}
 \label{eq:fistal1ls_rate}
 F\lbr x^k\rbr-F^*
 = 
 O
 \left( 
 \left(1-\sqrt{l_E\lambda}\right)^{k}
 \right).
 \end{eqnarray}
% \begin{eqnarray}
 %\label{eq:fistal1ls_rate}
 %F\lbr x^k\rbr-F^*\leq
 %\left(1-\sqrt{\frac{l_E}{L}}\right)^{k-K}\left(F\left(x^K\right)-F\lbr x^*\rbr+\frac{l_E}{2}\left\|x^K-x^*\right\|^2\right).
 %\end{eqnarray}
 %Finally if $H(x)$, the Hessian of $f$, is constant in $x$, then $K\leq\max\{\overline{K}_{D},\overline{K}_{E}\}$ where $\overline{K}_{D}$ and $\overline{K}_{E}$ are defined in (\ref{eq:maxsign}) and (\ref{eq:maxsupport}). The iterates $\{x^k\}_{k\in\mathbb{N}}$ converge to $x^*$ linearly with the same rate.
 \label{cor:FISTAforL1LS}
 \end{corollary}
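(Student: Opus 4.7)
The plan is to combine Corollary \ref{cor:q_lin} with the classical linear convergence rate of Nesterov's accelerated projected gradient method on smooth strongly convex functions. First I would verify that the constant parameter choice $(\lambda_k,\alpha_k)\equiv(\lambda,\alpha)$ with $\alpha=(1-\sqrt{l_E\lambda})/(1+\sqrt{l_E\lambda})$ satisfies the hypotheses of Corollary \ref{cor:q_lin}. Since $\nabla\phi(x_E)=-h_E^*+(\nabla f((x_E,0)))_E$, the gradient of $\phi$ is $L_\phi$-Lipschitz with $L_\phi\leq L$, so the strong convexity constant satisfies $l_E\leq L_\phi\leq L$, giving $l_E\lambda\in(0,1]$ and hence $\alpha\in[0,1)$. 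The step-size sequence is constant (therefore non-decreasing) and lies in $(0,1/L]$. Taking $\underline{\alpha}=\overline{\alpha}=\alpha$, the conditions of Corollary \ref{cor:q_lin} are met.

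By Corollary \ref{cor:q_lin} there exists a finite $K$ such that for every $k>K$ the iterates satisfy (\ref{eq:proj_HB1})--(\ref{eq:proj_HB2}), $x_D^k=y_D^k=0$, and $F(x^k)=\phi(x_E^k)$. Because $x^*$ is the unique minimizer of Problem SO, $x_E^*$ is the unique minimizer of $\phi$ over the convex orthant $O_E$, and $x_D^*=0$. The reduced iteration (\ref{eq:proj_HB1})--(\ref{eq:proj_HB2}) is precisely Nesterov's constant accelerated projected gradient scheme (scheme 2.2.8 of \cite{nesterov2004introductory}) applied with projection onto $O_E$ to the smooth strongly convex function $\phi$, with step size $\lambda\leq 1/L_\phi$ and canonical momentum $\alpha=(1-\sqrt{l_E\lambda})/(1+\sqrt{l_E\lambda})$. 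Invoking the standard estimate-sequence analysis yields
\begin{eqnarray*}
\phi(x_E^k)-\phi(x_E^*) = O\bigl((1-\sqrt{l_E\lambda})^{k-K}\bigr),
\end{eqnarray*}
which reads as $F(x^k)-F^*=O((1-\sqrt{l_E\lambda})^k)$ once the finite $K$ is absorbed into the implicit constant. Linear convergence of $\{x^k\}$ to $x^*$ then follows from strong convexity, since $(l_E/2)\|x_E^k-x_E^*\|^2\leq \phi(x_E^k)-\phi(x_E^*)$, together with $x_D^k=0=x_D^*$ for $k>K$.

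The main obstacle is the application of Nesterov's rate in this last step. Nesterov's theorem is usually stated for \emph{unconstrained} minimization with exact step size $1/L_\phi$, whereas here I need it with both (i) a possibly smaller step size $\lambda\leq 1/L_\phi$ and (ii) a projection onto the closed convex set $O_E$. Both extensions are standard: the step-size variation is handled by using $1/\lambda\geq L_\phi$ as the effective Lipschitz constant, which changes the rate from $1-\sqrt{l_E/L_\phi}$ to the advertised $1-\sqrt{l_E\lambda}$; the projection is accommodated because it is firmly nonexpansive and only the sequence $\{x_E^k\}$ (not $\{y_E^{k+1}\}$) needs to lie in $O_E$ for the estimate-sequence inequalities to propagate. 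I would either quote an existing reference for Nesterov's rate in the constrained strongly convex setting or redo the estimate-sequence argument carefully, exploiting the fact that $x_E^k\in O_E$ is automatically guaranteed by Corollary \ref{cor:q_lin} for all $k>K$.
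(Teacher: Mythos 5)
Your proposal is correct and follows essentially the same route as the paper's own proof: reduce via Corollary \ref{cor:q_lin} to Nesterov's constant accelerated scheme applied to $\phi$ on the orthant $O_E$ after a finite manifold-identification phase, then invoke the standard linear rate with the same two standard modifications (step size $\lambda\leq 1/L_\phi$ and projection onto a simple convex set). Your explicit check that $l_E\lambda\leq 1$ so $\alpha\in[0,1)$, and the deduction of iterate convergence from strong convexity, are exactly the steps the paper takes (the former only implicitly).
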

 \begin{proof}
 The analysis of the previous sections shows that, for the given choice of $\{\alpha_k\}_{k\in\mathbb{N}}$ and $\{\lambda_k\}_{k\in\mathbb{N}}$ there exists a $K$ such that, for all $k>K$, (\ref{eq:proj_HB1}) and (\ref{eq:proj_HB2}) hold, and $F(x^k)=\phi(x_E^k)$. 
 %Theorem \ref{thm:fista_lyap} implies that FISTA converges to $x^*$, therefore there exists $K'>0$ such that for all $k>K$, the projection operator in (\ref{eq:proj_HB2}) is an indentity. That is , for all $k>K'\geq K$
 %\begin{eqnarray*}
%x^{k+1}_D&=& 0\\
% y_E^{k+1} &=& x_E^k+\alpha_k(x_E^k-x_E^{k-1})\\
% x_E^{k+1}&=& y_E^{k+1}-\lambda_k\nabla\phi(y_E^{k+1}).
%\end{eqnarray*}
 Thus for $k>K$ the algorithm is equivalent to Nesterov's method (Section 2.2 \cite{nesterov2004introductory}) applied to the strongly convex function $\phi$ subject to the orthant constraint $O_E$. Therefore we apply Theorem 2.2.1 of \cite{nesterov2004introductory} with the fixed parameter choice (constant scheme 3, discussed on p. 76 of \cite{nesterov2004introductory}). The only difference compared to Theorem 2.2.1 is that we allow for step-sizes other than $1/L_E$, where $L_E$ is the Lipschitz constant of $\nabla\phi$. Note that $L\geq L_E$. This minor change is discussed on p. 72. of \cite{nesterov2004introductory}. Setting $\lambda=1/L$ (the maximum allowed step-size) gives:
  \begin{eqnarray}
  \label{bnn}
  F\lbr x^k\rbr-F^*
  = 
  O
  \left( 
  \left(1-\sqrt{\frac{l_E}{L}}\right)^{k}
  \right).
  \end{eqnarray}
  Another minor issue to note is that the minimization is constrained to the simple convex set $O_E$. This does not effect the convergence of Nesterov's method, as discussed in Constant Step Scheme (2.2.17) of \cite{nesterov2004introductory}. 
  
  By the strong convexity of $\phi$, the sequence $\{x^k\}_{k\in\mathbb{N}}$ also achieves linear convergence with the same iteration complexity.
 \end{proof}
 %Now, note that $\nabla^2\phi(x_E) = H_{EE}(x_E)$ and that, by continuity of the Hessian, $H_{EE}(y_E^{k+1})\to H_{EE}(x_E^*)$.Therefore, for $k$ sufficiently large, the eigenvalues of $H_{EE}(v^k)$ will lie within $[l_E-\epsilon,L_E]$ with $\epsilon$ a fixed positive constant, where $v^k=\mu x_E^k + (1-\mu)x_E^*$ and $\mu\in[0,1]$. Taylor's theorem implies that
  %\begin{eqnarray}
  %\phi(x_E^k)=\phi(X_E^*)+\frac{1}{2}(x_E-x_E^*)^T H_{EE}(v^k)(x_E-x_E^*)
  %\end{eqnarray}
  %for some such $v^k$.
  %Since $\phi$ is strongly convex, the result can then be established by applying (Theorem 2.2.3 \cite{nesterov2004introductory}) with the modification that the step-size is $1/L$ rather than $1/L_E$, where $L_E$ is the Lipschitz constant of the gradient of $\phi$. Note that $L_E\leq L$. Also the minimization is over the simple set $O_E$. Applying  FISTA over simple sets is discussed in Section 2.2.4 of \cite{nesterov2004introductory}.
 %\begin{remark}
 
 The iteration complexity with this parameter choice is
  \begin{eqnarray}
 \Omega\lbr\sqrt{\frac{L}{l_E}}\log\lbr\frac{1}{\epsilon}\rbr\rbr. 
 \label{eq:itercomp}
  \end{eqnarray}
 This is the best asymptotic iteration complexity that can be achieved by I-FBS using this step-size \cite{nesterov2004introductory}. Therefore we will refer to it as the locally optimal choice. Indeed it is better than the iteration complexity of ISTA \cite{Hale:2008} (which corresponds to I-FBS with $\alpha_k$ equal to $0$) which is
  $$\Omega\lbr\frac{L}{l_E}\log\lbr\frac{1}{\epsilon}\rbr\rbr.$$
  We will see in the next section that (\ref{eq:itercomp}) is better than the iteration complexity achieved by the FISTA-like choices of \cite{Beck:2009}, \cite{tseng2008accelerated} and \cite{chambolle2014weak}. 
  
  In practice the optimal momentum is not known a priori as it depends on the smallest eigenvalue of $H_{EE}$. The momentum could be estimated periodically based on the smallest eigenvalue of the Hessian corresponding to the current support set, or adapted based on the behavior of $F(x^k)$ (see Section \ref{sec:adamo} and Section \ref{sec:sims}).
  
  %Using larger step-sizes could lead to divergence of the algorithm if used before the final manifold is identified. 
  
  %However we do not know the support set $E$ a priori, so in practice the momentum parameter needs to be iteratively adapted. We propose such a scheme in Section \ref{sec:adapt}. 
% \end{remark}
% \begin{remark}

The authors of \cite{o2012adaptive} proposed an adaptive momentum restart scheme for Nesterov's method in the case of smooth optimization (i.e. $g(x)=0$). Corollary \ref{cor:q_lin} implies that the scheme can also be used for I-FBS applied to Problem SO. This follows because I-FBS with parameter choice (\ref{eq:fista_mo2}) reduces to minimizing a smooth function $\phi$ after a finite number of iterations, after which the momentum restart scheme can be used. Referring to the analysis of \cite{o2012adaptive}, it can be shown that the method will have the same iteration complexity as given in (\ref{eq:itercomp}).
% \end{remark}
%\begin{remark}
%For the case $l_E=0$, i.e. $\phi$ is not strongly convex, a linear rate of convergence is not possible and a good choice would be the classical one (\ref{eq:fista_mo2}) \cite{Beck:2009} or (\ref{eq:fista_mo3}) \cite{chambolle2014weak}, since they guarantee the $O(1/k^2)$ rate which is optimal in this case.
%\end{remark} 

Local linear convergence can also be proved when the local function $\phi$ is not strongly convex, but the limit point of the iterations obeys the ``strict-complementarity" condition: $E=\supp(x^*)$. Furthermore the Hessian matrix of $\phi$ must be invariant in a region containing the limit. In the following corollary, let $x^* = \lim_{k\to\infty}x^k$, which is in $X^*$ by Theorem \ref{thm:fista_lyap}.
\begin{corollary}
Suppose Assumption SO holds and $E=\supp(x^*)$, where $\lim_{k\to\infty} x^k=x^*\in X^*$. Let $H_{EE}(x)$ be the Hessian of the function $\phi$ defined in (\ref{eq:phidef}). Let $\hat{l}_E$ be the smallest non-zero eigenvalue of $H_{EE}(x_E^*)$. Assume the range space of $H_{EE}$ is invariant in some neighborhood $N^*$ around $x^*$. If all eigenvalues of $H_{EE}(x_E^*)$ are zero, $x^k=x^*$ after a finite number of iterations, for any choice of $\{\lambda_k,\alpha_k\}$ satisfying the conditions of Theorem \ref{thm:finite} or Theorem \ref{cor:forchamb}. If $\hat{l}_E>0$, $\lambda\in(0,1/L]$, 
\label{cor:strict_comp}
\begin{eqnarray}
\lambda_k=\lambda\hbox{ and } \alpha_k=\frac{1-\sqrt{\hat{l}_E\lambda}}{1+\sqrt{\hat{l}_E\lambda}}\quad\forall k\in\mathbb{Z}_+,
\label{eq:fistamo2}
\end{eqnarray}
then the iterates $x^k$ of I-FBS converge to $x^*$ linearly and $F(x^k)$ converges to $F^*$ linearly. Indeed 
 \begin{eqnarray*}
 \label{eq:fistal1ls_rate2}
 F\lbr x^k\rbr-F^*
 = 
 O
 \left( 
 \left(1-\sqrt{\hat{l}_E\lambda}\right)^{k}
 \right).
 \end{eqnarray*}
\end{corollary}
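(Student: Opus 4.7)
The plan is to chain Corollary \ref{cor:q_lin} with Nesterov's constant-parameter analysis for strongly convex minimization, using strict complementarity to eliminate the orthant constraint and the range-invariance hypothesis to reduce to a strongly convex subproblem. First, since the chosen $(\lambda_k,\alpha_k)$ are constant and satisfy the hypotheses of Corollary \ref{cor:q_lin}, there exists $K_0$ such that for all $k>K_0$ the iterates of I-FBS are exactly those of the projected inertial scheme \eqref{eq:proj_HB1}--\eqref{eq:proj_HB2} applied to $\phi$ on $O_E$, with $x_D^k=y_D^k=0$ and $F(x^k)=\phi(x_E^k)$. Strict complementarity $E=\supp(x^*)$ says that every coordinate of $x^*_E$ is nonzero with the correct sign, so $x^*_E$ lies in the \emph{interior} of $O_E$. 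Combined with $x^k\to x^*$ (Theorem \ref{thm:fista_lyap}) and the nonexpansiveness of the forward step (Lemma \ref{lemma:nonexpansive}), both $x_E^k$ and $y_E^{k+1}-\lambda\nabla\phi(y_E^{k+1})$ eventually lie in $\mathrm{int}(O_E)\cap N^*$, so the projection $P_{O_E}$ acts as the identity. For $k$ large enough the recursion becomes the unconstrained Nesterov-style iteration
\[
y_E^{k+1}=x_E^k+\alpha(x_E^k-x_E^{k-1}),\qquad x_E^{k+1}=y_E^{k+1}-\lambda\nabla\phi(y_E^{k+1}).
\]

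For the degenerate case $\hat{l}_E=0$, range invariance forces $H_{EE}\equiv 0$ throughout $N^*$, so $\phi$ is affine on $N^*$; since $x_E^*$ is an interior minimizer, $\nabla\phi\equiv 0$ on $N^*$. Once the iterates enter $N^*$ the recursion collapses to the pure inertial update $\Delta_{k+1}=\alpha_k\Delta_k$. With $\alpha_k\leq\overline{\alpha}<1$ this forces $\|\Delta_k\|$ to decay geometrically, so the iterates stabilize at their limit (identified with $x^*$) at a geometric rate, which is stronger than what is claimed.

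For the non-degenerate case $\hat{l}_E>0$, let $V=\mathrm{Range}(H_{EE}(x_E^*))$ and $V^\perp=\mathrm{Null}(H_{EE}(x_E^*))$; by the invariance assumption $V$ is the range of $H_{EE}(x)$ for every $x\in N^*$. Write $x_E^k-x_E^*=u^k+w^k$ with $u^k\in V$ and $w^k\in V^\perp$. Since $\nabla\phi(x_E^*)=0$ and the Hessian annihilates $V^\perp$ pointwise in $N^*$, the function $\phi$ is constant along $V^\perp$ in $N^*$ and $\nabla\phi(y_E^{k+1})\in V$ once iterates are in $N^*$. The two components therefore decouple. On $V^\perp$ the iteration is $w^{k+1}=(1+\alpha)w^k-\alpha w^{k-1}$, whose characteristic roots are $1$ and $\alpha$, so $w^k$ converges to its constant mode at geometric rate $\alpha=(1-\sqrt{\hat{l}_E\lambda})/(1+\sqrt{\hat{l}_E\lambda})<1-\sqrt{\hat{l}_E\lambda}$; identifying $x^*$ with the limit absorbs the constant mode. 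On $V$ the iteration is Nesterov's constant-step scheme applied to $\phi|_{x_E^*+V}$, which is strongly convex with parameter $\hat{l}_E$ and has $L$-Lipschitz gradient; Theorem 2.2.1 of \cite{nesterov2004introductory} (in the form cited in the proof of Corollary \ref{cor:FISTAforL1LS}) yields $\phi(x_E^k)-\phi(x_E^*)=O((1-\sqrt{\hat{l}_E\lambda})^k)$, hence $F(x^k)-F^*$ has the claimed rate, and strong convexity on $V$ upgrades this to iterate linear convergence.

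The main obstacles are the two continuity/locality issues glossed over above. The delicate step is verifying that $\phi$ restricted to the affine slice $x_E^*+V$ is strongly convex with constant exactly $\hat{l}_E$ (rather than $\hat{l}_E-\varepsilon$ for a possibly shrinking neighborhood); range invariance gives the block-diagonal structure of $H_{EE}$ cleanly, but a continuity argument or a slight weakening absorbed by the $O(\cdot)$ is needed to recover the stated constant. The second delicate step is ensuring that once $x_E^k$ enters $\mathrm{int}(O_E)$, the extrapolated point $y_E^{k+1}$ does not exit it; this follows from $\|\Delta_k\|\to 0$ (Theorem \ref{thm:fista_lyap}\eqref{state:finitesum}) together with the fact that $x_E^*$ lies at positive distance from $\partial O_E$ under strict complementarity.
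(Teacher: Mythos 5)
Your overall route matches the paper's: reduce to a smooth inertial iteration on $E$ (the paper does this via Lemma 5.3 of \cite{Hale:2008} and the mean-value form $\overline{H}^k=\int_0^1 H(x^*+t(x^k-x^*))\,dt$; you do it via Corollary \ref{cor:q_lin} plus interiority of $x_E^*$ in $O_E$, which is essentially the same observation), then split $x_E^k-x_E^*$ into range and null-space components of $H_{EE}$ and run Nesterov's constant-scheme analysis on the range-space part. For the case $\hat{l}_E>0$ your argument is fine, and your explicit solution of the null-space recursion $w^{k+1}=(1+\alpha)w^k-\alpha w^{k-1}$ (roots $1$ and $\alpha$) is actually more transparent than the paper's bare assertion that $(I-P)(x_E^k-x_E^*)=0$ for large $k$; since $\alpha<1-\sqrt{\hat{l}_E\lambda}$ and $\phi$ is flat along $V^\perp$, the null-space mode is harmless for both the objective rate and the iterate rate.

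The genuine gap is in the degenerate branch, and you have the comparison backwards: the corollary claims $x^k=x^*$ after \emph{finitely many} iterations, which is strictly stronger than geometric convergence, not weaker. From $\Delta_{k+1}=\alpha_k\Delta_k$ with $\alpha_k>0$ and $\Delta_K\neq 0$ you get $\Delta_k\neq 0$ for all $k\geq K$, so $x^k=x^K+\Delta_{K+1}\sum_{j}\prod\alpha$-type partial sums that approach the limit but never attain it; geometric decay of $\|\Delta_k\|$ therefore does not yield finite termination. The paper obtains finite convergence precisely from the claim that the null-space components have \emph{exactly} converged, i.e.\ $(I-P)(x_E^k-x_E^*)=0$ for $k$ large (with $P=0$ this is $x_E^k=x_E^*$); your own recursion for $w^k$ shows that this claim needs a justification beyond $x^k\to x^*$, since the mode at root $1$ is only identified with zero in the limit. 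So either you must supply an argument forcing $\Delta_k=0$ at some finite index, or you must weaken the conclusion in the degenerate case to geometric convergence --- which would be a deviation from the stated result. (The manuscript's own footnote acknowledges errors in these proofs, and this is exactly the kind of step that is fragile.) A secondary, smaller point you already flag yourself: obtaining the strong-convexity constant $\hat{l}_E$ exactly, rather than $\hat{l}_E-\varepsilon$ on a shrinking neighborhood, is glossed over in both your sketch and the paper.
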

 \begin{proof}
 
The proof proceeds almost identically to Theorem 4.11 of \cite{Hale:2008}. Note that if $x^k\to x^*$ than $y^k\to x^*$. Now Lemma 5.3 of \cite{Hale:2008} can be directly applied to I-FBS to say that, after a finite number of iterations,
\begin{eqnarray}
x_i^{k+1} = y_i^{k+1} - \lambda_k(\nabla f(y^{k+1})_i - h_i^*)\quad\forall i\in \supp(x^*).
\label{ffm}
\end{eqnarray}
Assume $k$ is large enough that (\ref{ffm}) holds, $x^k\in N^*$, and $k>\max\{\overline{K}_D,\overline{K}_E\}$. Since $x_i^k$ is $0$ for all $i\in D$, it suffices to consider $i\in E=\supp(x^*)$. Recall that $H(x)$ is the Hessian of $f$ evaluated at $x$. Now, let $\overline{H}^k$ be defined as
\begin{eqnarray*}
\overline{H}^k\triangleq\int_0^1 H(x^*+t(x^k-x^*))dt
\end{eqnarray*}
By assumption the range spaces of $\overline{H}^k$ are now invariant over $k$. For a matrix $W$, let $W_{EE}$ be the $|E|\times|E|$ submatrix of $W$ with row and column indices given by $E$. Let $P$ be the orthogonal projection onto the range space of $H_{EE}$. Since $E=\supp(x^*)$, equation (\ref{ffm}) can be used to claim that
\begin{eqnarray}
x_E^{k+1}
= y_E^{k+1} - \lambda(\nabla f(y^{k+1}) - h^*)_E
= y_E^{k+1} - \lambda\overline{H}^k_{EE}(y_E^{k+1}-x_E^*).
\end{eqnarray}
This follows from the mean value theorem. At each iteration the term $- \lambda\overline{H}^k_{EE}(y_E^{k+1}-x_E^*)$ stays in the range space of $H_{EE}$, which implies that the null-space components of the iterates have already converged.  In other words, for $k$ sufficiently large,
$$
(I-P)(x_E^k-x_E^*)=0.
$$
If $\hat{l}_E>0$, it suffices to consider the convergence of $\{Px^k\}$, that is, consider the component in the range space of $H_{EE}$. Since $\phi$ restricted to the range space of $H_{EE}$ is strongly convex, we now simply repeat the arguments of Corollary \ref{cor:q_lin} and the result follows.
 \end{proof}

\subsection{``Underdamped" I-FBS}
\label{sec:underD}
In \cite{o2012adaptive}, the behavior of the FISTA-like methods when applied to strongly convex functions was investigated. It was shown that for such functions if $\alpha_k\approx 1$, the algorithm moves into what is known as an ``underdamped regime", which leads to oscillations in the objective function at a predictable frequency, and a sub-optimal iteration complexity. The results of the preceding sections allow us to extend this analysis to Problem SO, despite it being nonsmooth and not strictly convex. 

The analysis of \cite{o2012adaptive} revealed that if $\alpha_k\approx 1$, the trace of the objective function values will oscillate with a frequency proportional to $1/\sqrt{\kappa}$ where $\kappa$ is the condition number of the Hessian at the minimum. The iteration complexity in the high-momentum regime with step-size $\lambda=1/L$ is
$$
\Omega\lbr
\kappa\log \lbr\frac{1}{\epsilon}\rbr
\rbr
$$ 
More precisely, the behavior $F(x^k)\approx C (1-\frac{1}{\kappa})^k \cos^2(k/\sqrt{\kappa})$ is observed. Now Corollary \ref{cor:q_lin} shows that after a finite number of iterations, FISTA (with parametric constraints) reduces to minimizing $\phi$ (defined in (\ref{eq:phidef})) subject to an orthant constraint. Therefore we can apply the analysis of \cite{o2012adaptive} once the algorithm is in this regime. Thus if $l_E>0$, FISTA obeys the conditions of Theorem \ref{thm:finite} or \ref{cor:forchamb}, and $\alpha_k\to 1$, then the iteration complexity will be \cite{o2012adaptive}
$$
\Omega\lbr
\frac{L}{l_E}\log \lbr\frac{1}{\epsilon}\rbr
\rbr,
$$
which is worse than the iteration complexity achieved with the locally optimal choice given in Corollary \ref{cor:FISTAforL1LS}. The trace of the objective function will exhibit oscillations with period $\sqrt{l_E/L}$. This result directly applies to the FISTA-like choices of \cite{Beck:2009}, \cite{tseng2008accelerated} and \cite{chambolle2014weak}. The result also applies when $l_E=0$ under the strict-complementarity condition. In this case replace $l_E$ with $\hat{l}_E$ defined in Corollary \ref{cor:strict_comp}.

\subsection{An Adaptive Modification}
\label{sec:adamo}
In numerical experiments we have noticed that it can take many iterations for the optimal manifold to be identified by I-FBS. This means that one of the FISTA-like choices can outperform I-FBS with our locally optimal choice (\ref{eq:fistamo}) before the optimal manifold is identified. This is because the FISTA-like choices guarantee $O(1/k^2)$ convergence during this phase whereas the locally optimal choice does not have a guaranteed rate until the optimal manifold is identified (although an $O(1/k)$ rate can probably be established following the analysis of \cite{chambolle2014weak}, however this is beyond the scope this paper). On the other hand the analysis of the previous section showed that the FISTA-like choices have poor performance once the algorithm is in the optimal manifold. In summary the FISTA-like choices have excellent global properties but poor local properties. 

In light of this we propose the following adaptive heuristic. We use the condition, $F(x^k)>F(x^{k-1})$, as an indication the algorithm is at least approximately operating in the optimal manifold. This is because with a FISTA-like choice the algorithm will eventually converge to the optimal manifold and then the function values will start to oscillate. So the adaptive modification is the following. Use Beck and Teboulle's parameter choice of (\ref{eq:fista_mo2}) until $F(x^{k})>F(x^{k-1})$. For all iterations after, use the locally optimal momentum given in (\ref{eq:fistamo}). We call this scheme FISTA-AdOPT. See Experiment 1 for empirical results. It is worth mentioning that it is better to use the condition $(y^{k+1}-x^{k+1})^T(x^{k+1}-x^k)>0$ rather than $F(x^k)>F(x^{k-1})$. It was shown in \cite{o2012adaptive} that the two conditions are equivalent however the first avoids computation of $F$.

The iterates of FISTA-AdOPT are guaranteed to converge to a solution and, until $F(x^k)>F(x^{k-1})$ occurs, the convergence in the objective function is $O(1/k^2)$. Furthermore, the asymptotic convergence rate is optimal and given by (\ref{bnn}). The main drawback of the scheme is that it might switch to the locally optimal parameter choice before the iterates are confined to the optimal manifold. Another drawback is that the locally optimal momentum parameter must be estimated, which involves computing the largest and smallest singular values of $A$ confined to the current support set. This computation is nontrivial when the support set is large. A better alternative is to use the adaptive restart scheme proposed in \cite{o2012adaptive}. Thanks to our analysis, this method is guaranteed to achieve the same asymptotic iteration complexity as the locally optimal parameter choice, but will also achieve the $O(1/k^2)$ performance in the transient regime prior to manifold identification (See the remarks after Corollary \ref{cor:FISTAforL1LS}). The practical performance of the momentum restart scheme and FISTA-AdOPT are compared in the next section.  

%============================================Begin SIPM section ===========================
\subsection{The Splitting Inertial Proximal Method}

In \cite{MoudafiOliny}, Moudafi and Oliny introduced the Splitting Inertial Proximal Method (SIPM):

\begin{eqnarray}
x^{k+1} 
\in 
-\lambda_k\partial_{\epsilon_{k}}g(x^{k+1})+x^{k}-\lambda_k\nabla f(x^{k})+\alpha_k(x^k - x^{k-1}).
\label{eq:sipmdef}
\end{eqnarray} 
In fact they introduced it for the more general monotone inclusion problem. The method is a direct generalization of Polyak's heavy-ball with friction method to problems involving the sum of two functions. It differs from I-FBS in that the gradient w.r.t. $f$ is computed at $x^k$ rather than the extrapolated point $x^k + \alpha_k(x^k - x^{k-1})$. Our analysis of I-FBS in the case of sparse optimization can be extended easily to Moudafi and Oliny's method under the condition that $\sum_k\|\Delta_k\|^2$ is finite, for which sufficient conditions were established in \cite{MoudafiOliny}. %We summarize this in the following theorem. The proof is almost identical to those for FISTA, therefore it is omitted. 
\begin{theorem}
Suppose that Assumption SO holds. Assume $\epsilon_k$ is $0$ for all $k\in\mathbb{N}$, $0<\lambda_k< 2/L$ for all $k$, and there exists $\overline{\alpha}\geq0$ such that $0\leq\alpha_k\leq \overline{\alpha}$ for all $k$. If
$
\sum_k\|\Delta_k\|^2<\infty, 
$
and $\|x^k-x^*\|$ is bounded for all $x^*\in X^*$,
then there exists a constant $K>0$ such that, for all $k>K$ the iterates of Algorithm (\ref{eq:sipmdef}) applied to Problem SO satisfy
\begin{eqnarray*}
\label{thm:sipmfinite:eq2}
\sgn\left(x_i^{k}-\lambda_k\nabla f(x^{k})_i + \alpha_k\left(x_i^k-x_i^{k-1}\right)\right)
&=&-\frac{h^*_i}{\rho},\ \forall i \in E,
\end{eqnarray*}
and
\begin{eqnarray*}
x^k_i = 0,\ \forall i \in D.
\label{thm:sipmfinite:eq1}
\end{eqnarray*}
\end{theorem}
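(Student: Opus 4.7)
The plan is to adapt the argument of Theorem \ref{cor:forchamb} to SIPM. The key structural observation is that SIPM can be rewritten as
$$x^{k+1} = \prox_{\lambda_k \rho\|\cdot\|_1}(u^{k+1}), \qquad u^{k+1} \triangleq x^k + \alpha_k(x^k - x^{k-1}) - \lambda_k \nabla f(x^k),$$
so that $u^{k+1}$ plays the role that $y^{k+1} - \lambda_k \nabla f(y^{k+1})$ played in the I-FBS analysis, and it is precisely the quantity whose sign is asserted in the theorem. I would compare $u^{k+1}$ to the reference point $u^* \triangleq x^* - \lambda_k h^*$, noting that by Theorem \ref{thm:constGrad} we have $x^* = \prox_{\lambda_k \rho\|\cdot\|_1}(u^*)$ and, as in (\ref{eq:callback}), $\sgn(u^*_i) = -h^*_i/\rho$ with $|u^*_i| \geq \nu_k$ for all $i \in E$.

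The structural estimates then mirror those in the I-FBS proofs. If at iteration $k+1$ the sign condition fails at some $i \in E$, the second case of Lemma \ref{lemma:Sv} combined with coordinate-wise nonexpansiveness of $S_{\nu_k}$ on the remaining indices yields, exactly as in the derivation of (\ref{eq:thebiggy}),
$$\|x^{k+1} - x^*\|^2 \leq \|u^{k+1} - u^*\|^2 - \nu_1^2,$$
and if $x^{k+1}_i \neq 0$ for some $i \in D$, the third case of Lemma \ref{lemma:Sv} together with $\nu_k - |x^*_i - \lambda_k h^*_i| \geq \lambda_k \omega$ yields the analogous bound with $\lambda_1^2 \omega^2$ in place of $\nu_1^2$. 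The only new computation relative to the I-FBS proof is the bound on $\|u^{k+1} - u^*\|$, which by the triangle inequality and Lemma \ref{lemma:nonexpansive} becomes
$$\|u^{k+1} - u^*\| \leq \|(x^k - \lambda_k \nabla f(x^k)) - (x^* - \lambda_k h^*)\| + \alpha_k \|\Delta_k\| \leq \|x^k - x^*\| + \alpha_k \|\Delta_k\|,$$
which is structurally identical to the corresponding bound used in the proof of Theorem \ref{cor:forchamb}. Squaring and invoking the assumed upper bound $\|x^k - x^*\| \leq M_1$ produces the single-step recursion, valid whenever the sign condition is violated at iteration $k+1$,
$$\varphi_{k+1} \leq \varphi_k + \overline{\alpha} M_1 \|\Delta_k\| + \tfrac{1}{2}\overline{\alpha}^2 \|\Delta_k\|^2 - \tfrac{1}{2}\nu_1^2,$$
with $\varphi_k = \tfrac{1}{2}\|x^k - x^*\|^2$; the analogous recursion with $\lambda_1^2 \omega^2$ in place of $\nu_1^2$ holds at iterations where the support condition fails.

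The main obstacle is the unrolling step. The drop $-\nu_1^2/2$ appears only at violation iterations, while between violations the non-violation bound $\varphi_{k+1} - \varphi_k \leq \overline{\alpha} M_1 \|\Delta_k\| + \tfrac{1}{2}\overline{\alpha}^2 \|\Delta_k\|^2$ contributes cumulative growth that, under the assumption $\sum \|\Delta_k\|^2 < \infty$ alone, is only controlled through Jensen at order $\sqrt{k}$. Since the authors' footnote explicitly flags this bookkeeping in the proof of Theorem \ref{cor:forchamb} as erroneous, closing the argument cleanly likely requires either strengthening the hypothesis (for instance, to $\sum \|\Delta_k\| < \infty$, which follows in turn from the conditions on $\{\alpha_k\}$ appearing in Moudafi and Oliny's original convergence theorem) or using $\|\Delta_k\| \to 0$ more delicately to separate violation and non-violation contributions asymptotically, ensuring that each violation at sufficiently large $k$ produces a drop of at least $\nu_1^2/4$ whose cumulative effect dominates the intervening growth. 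This final accounting is the step that most deserves careful attention; everything else transfers verbatim from the I-FBS analysis.
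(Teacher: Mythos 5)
Your proposal follows the paper's proof of this theorem essentially step for step: the same rewriting of the SIPM update as a proximal step at $u^{k+1}=x^k+\alpha_k\Delta_k-\lambda_k\nabla f(x^k)$, the same comparison with $x^*-\lambda_k h^*$ via Lemma \ref{lemma:Sv} and Theorem \ref{thm:constGrad}, and the same single-step recursion obtained from Lemma \ref{lemma:nonexpansive}, Cauchy--Schwarz, and the boundedness hypothesis $\|x^k-x^*\|\le M_1$ (the paper's line (\ref{opi}) even drops the $-\nu_1^2$ term that you correctly retain). The one place you diverge is that you decline to close the unrolling step, and your diagnosis there is exactly right: the paper finishes by invoking the argument of Theorem \ref{cor:forchamb}, which telescopes the violation-iteration inequality (\ref{afss}) from $0$ to $k$ to reach (\ref{bigggg}) as though the sign-violation condition held at \emph{every} intermediate iteration, whereas the decrement $-\nu_1^2$ is only available at iterations where a violation actually occurs; between violations the recursion allows growth of order $\|\Delta_k\|$, which the hypothesis $\sum_k\|\Delta_k\|^2<\infty$ controls only at the $\sqrt{k}$ scale via Jensen. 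This is precisely the bookkeeping error flagged in the paper's footnote for Theorems 5.2 and 5.6. Of your two proposed repairs, be aware that $\sum_k\|\Delta_k\|<\infty$ does not follow from $\sum_k\|\Delta_k\|^2<\infty$ and would be a genuinely stronger hypothesis than what Moudafi and Oliny's conditions are shown to deliver here, so the asymptotic separation of violation and non-violation iterations (using $\|\Delta_k\|\to 0$ so that each late violation forces a net drop of at least, say, $\nu_1^2/2$, while only finitely many violations can occur before $\varphi_k$ would go negative) is the more robust route to actually completing the proof.
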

\begin{proof}
The proof follows in a similar way to Theorems \ref{thm:finite} and \ref{cor:forchamb}. Equation (\ref{afs}) is proved by following similar arguments as in the proof of Theorem \ref{thm:finite}. We include the salient differences.

Recall that $\nu_k=\rho\lambda_k$. If
\begin{eqnarray}
\sgn(x_i^k-\lambda_k\nabla f(x^k)_i+\alpha_k(x^k_i-x^{k-1}_i))
&\neq&
\sgn(x_i^*-\lambda_kh_i^*)
=-h_i^*/\rho
\nonumber\\&&
\hbox{for some}\ i\in E,
\label{incorrect_sgn}
\end{eqnarray}
then,
\begin{eqnarray}
\nonumber
|x_i^{k+1}-x_i^*|^2
&=&
\left|
S_{\nu_k}
\left(x_i^k-\lambda_k\nabla f(x^k)_i+\alpha_k\left(x^k_i-x^{k-1}_i\right)\right)
-S_{\nu_k} (x_i^*-\lambda_k h_i^*)
\right|^2
\\
&\leq&
\left|
x_i^k-\lambda_k\nabla f(x^k)_i+\alpha_k\left(x^k_i-x^{k-1}_i\right)
-(x_i^*-\lambda_k h_i^*)
\right|^2
-\nu_k^2
\label{aas}
\end{eqnarray}
where (\ref{aas}) follows for the same reasons as given for (\ref{eq:mid}). Continuing to follow the proof of Theorem \ref{thm:finite}, we say the following: if (\ref{incorrect_sgn}) holds, then
\begin{eqnarray}
\nonumber
\|x^{k+1}-x^*\|^2
&=&
\sum_{j\neq i}|x^{k+1}_j-x^*|^2 +|x_i^{k+1}-x^*|^2
\\
&\leq&
\sum_{j\neq i}
\left|
x_j^k-\lambda_k\nabla f(x^k)_j+\alpha_k\left(x^k_j-x^{k-1}_j\right)
-(x_j^*-\lambda_k h_j^*)
\right|^2
\nonumber\\&&
+
\left|
x_i^k-\lambda_k\nabla f(x^k)_i+\alpha_k\left(x^k_i-x^{k-1}_i\right)
-(x_i^*-\lambda_k h_i^*)
\right|^2
\nonumber\\&&
-\nu_k^2
\label{asfjg}\\\nonumber
&\leq&
\|x^k-\lambda_k\nabla f(x^k)+\alpha_k\Delta_k
-(x^*-\lambda_k h^*)\|^2
-\nu_1^2
\\\label{opi}
&\leq&
\|x^k-x^*\|^2
+\overline{\alpha}^2\|\Delta_k\|^2
+2\overline{\alpha}\|x^k-x^*\|\|\Delta_k\|
 \end{eqnarray}
Equation (\ref{asfjg}) follows from the element-wise nonexpansiveness of $S_{\nu}$ and (\ref{aas}), and (\ref{opi}) follows from the nonexpansiveness of $I-\lambda\nabla f$, by application of Cauchy-Schwartz, and by substituting the upper bound $\overline{\alpha}$ for $\alpha_k$. Equation (\ref{opi}) is identical to (\ref{afs}).
From this point on, the proof is identical to Theorem \ref{cor:forchamb}. As before we cannot explicitly bound the number of iterations unless we know an upper bound for $\sum_k \Delta_k^2$.
\end{proof}
%{\bf Remark.}

Moudafi and Oliny provided a condition on the parameters under which $$\sum_k\|\Delta_k\|^2<\infty,$$ and $\|x^k-x^*\|$ is bounded for all $x^*\in X^*$. Choose the sequence $\{\alpha_k\}_{k\in\mathbb{N}}$ to be non-decreasing, the constant $\overline{\alpha}<1/3$, and the sequence $\{\lambda_k\}_{k\in\mathbb{N}}$ to be bounded away from $2/L$. They showed that this implies $\|x^k-x^*\|$ is bounded for all $k$ and $x^*\in X^*$ and $$\sum_k\alpha_k\|\Delta_k\|^2<\infty.$$ Now using the fact that $\alpha_k$ is less than $1/3$, $\sum_k\|\Delta_k\|^2<\infty$. If $\alpha_k$ is zero for all $k$ the theorem follows from \cite{Hale:2008} since the algorithm reduces to ISTA.
%\end{remark}

We have proved that finite convergence in sign on $E$ and to $0$ on $D$ holds for SIPM applied to Problem SO. An analogous result to Corollary \ref{cor:q_lin} can now be shown. After a finite number of iterations, SIPM reduces to HBF projected onto a quadrant. Because of its similarity to Corollary \ref{cor:q_lin}, the proof is omitted. 

\begin{corollary}
Assume $0<\lambda_k< 2/L$ for all $k$, and there exists $\overline{\alpha}\geq0$ such that $0\leq\alpha_k\leq \overline{\alpha}$ for all $k$. If $\sum_k\|\Delta_k\|^2<\infty$, and $\|x^k-x^*\|$ is bounded for all $x^*\in X^*$, then there exists a $K>0$ such that for $k>K$, the iterates of SIPM satisfy
\begin{eqnarray*}
x_E^{k+1} &=& P_{O_E}\left(x^k_E-\lambda_k\nabla\phi(x_E^k)+\alpha_k\left(x_E^k-x_E^{k-1}\right) \right) 
\\
x_D^{k}&=& 0
\\
F(x^k) &=& \phi(x_E^k) 
\end{eqnarray*}
where $\phi$ and $O_E$ are defined in (\ref{eq:phidef}) and (\ref{O_E_def}) respectively. 
\end{corollary}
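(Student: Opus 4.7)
The plan is to mirror the proof of Corollary \ref{cor:q_lin} but applied to the SIPM update formula, using the previously established finite-identification result for SIPM in place of Theorem \ref{thm:finite}. Concretely, the hypotheses $\sum_k\|\Delta_k\|^2<\infty$ and boundedness of $\|x^k-x^*\|$ are exactly what the preceding theorem requires, so there exists $K>0$ such that for all $k>K$ we have $x_D^k=0$ and, for all $i\in E$, $\sgn(x_i^k-\lambda_k\nabla f(x^k)_i+\alpha_k(x_i^k-x_i^{k-1}))=-h_i^*/\rho$. Fix such a $k$ and work coordinate-wise.

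First I would dispose of the identity $F(x^k)=\phi(x_E^k)$. Since $x_D^k=0$, $f(x^k)=f((x_E^k,0))$. For each $i\in E$, applying the soft-threshold to an argument of sign $-h_i^*/\rho$ produces either $0$ or a value of the same sign; in either case $-h_i^* x_i^k\geq 0$, so $x_E^k\in O_E$ and $|x_i^k|=-h_i^* x_i^k/\rho$ (using $|h_i^*|=\rho$ on $E$). Summing over $i\in E$ gives $\rho\|x^k\|_1=-(h_E^*)^T x_E^k$, hence $F(x^k)=f((x_E^k,0))-(h_E^*)^T x_E^k=\phi(x_E^k)$.

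Next I would verify the update (\ref{eq:proj_HB1})-(\ref{eq:proj_HB2}) by a two-case comparison between the shrinkage and the orthant projection. For $i\in E$, let $w_i:=x_i^k-\lambda_k\nabla f(x^k)_i+\alpha_k(x_i^k-x_i^{k-1})$ and $z_i:=x_i^k-\lambda_k\nabla\phi(x_E^k)_i+\alpha_k(x_i^k-x_i^{k-1})$. Since $x_D^k=0$ we have $\nabla\phi(x_E^k)_i=-h_i^*+\nabla f(x^k)_i$, and therefore $z_i=w_i+\lambda_k h_i^*$. By the sign condition $\sgn(w_i)=-h_i^*/\rho$. If $|w_i|>\nu_k=\rho\lambda_k$, then $S_{\nu_k}(w_i)=w_i-\nu_k\sgn(w_i)=w_i+\lambda_k h_i^*=z_i$, and a short computation shows $\sgn(z_i)=\sgn(w_i)$, hence $z_i\in O_E$ coordinate-wise and $P_{O_E}(z)_i=z_i$. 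If $|w_i|\leq\nu_k$, then $S_{\nu_k}(w_i)=0$, while $z_i=h_i^*(\lambda_k\rho-|w_i|)/\rho$ has $\sgn(z_i)=h_i^*/\rho$, i.e.\ the wrong sign for $O_E$, so $P_{O_E}(z)_i=0$ as well. In either case the SIPM iterate and the projected-heavy-ball iterate agree on $E$, which establishes (\ref{eq:proj_HB1})-(\ref{eq:proj_HB2}).

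The second assertion, that the finite-identification hypothesis also yields $x_D^{k+1}=0$ (and hence $y_D=0$ when relevant), follows directly from the previous theorem's conclusion on $D$. I do not expect a serious obstacle here; the only subtlety is in the case analysis showing that the soft-threshold coincides with the orthant projection of the shifted argument, which was already the content of the analogous step in Corollary \ref{cor:q_lin}. The reason no explicit bound on $K$ can be given is that the preceding theorem only guarantees existence of $K$ under the summability hypothesis, without quantifying it.
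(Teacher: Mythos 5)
Your proposal is correct and follows essentially the route the paper intends: the paper omits this proof precisely because it is the SIPM analogue of Corollary \ref{cor:q_lin}, and your argument is exactly that adaptation, using the preceding SIPM finite-identification theorem in place of Theorem \ref{thm:finite} and then running the same coordinate-wise comparison of the soft-threshold with the orthant projection of the shifted argument $z_i=w_i+\lambda_k h_i^*$. The only (harmless) point to note is the implicit index shift: membership of $x_E^k$ in $O_E$ uses the sign condition at iteration $k-1$, which is absorbed into the choice of $K$.
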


Since SIPM reduces to projected HBF, parameter choices could be made to optimize the performance of HBF on the optimal manifold. Such computations have been carried out elsewhere \cite{johnstone15} for the special case of Problem $\ell_1$-LS. The analysis closely follows the original work by Polyak for HBF \cite{polyak1964some}. For strongly convex quadratic functions, HBF obtains linear convergence with a $\sqrt{\kappa}$-times lower rate than gradient descent, where $\kappa$ is the condition number of the Hessian. For Problem $\ell_1$-LS, when $l_E>0$ the optimal asymptotic iteration complexity of HBF turns out to be equal to that of I-FBS with our optimal choice given in (\ref{eq:itercomp}). However we can only guarantee local linear convergence for non-decreasing choices of $\alpha_k$ in the range $[0,\overline{\alpha}]$ with $\overline{\alpha}$ less than $1/3$. So the locally optimal value of the momentum must be less than $1/3$ for this to hold. Otherwise convergence is linear with a worse iteration complexity \cite{johnstone15}.

\section{Numerical Simulations}
\label{sec:sims}
 We now compare several choices of parameters for I-FBS applied to a random instance of Problem $\ell_1$-LS. To compute $E$ and thus find the locally optimal parameter choice given in (\ref{eq:fistamo}), we use the interior point solver of \cite{kim2007interior} to find a solution to a target duality gap of $10^{-8}$. We then compute $h^*=\nabla f(x^*)$, and approximate the set $E$ by the set of all entries such that $\rho-|h^*_i|$ is smaller than $10^{-4}$. We also use the interior point solver to find an estimate of $F^*$. Recall that $l_E$ denotes the smallest eigenvalue of $A_E^T A_E$ and note that $l_E$ was greater than $0$ in all experiments we ran. The parameter choices under consideration will be referred to by the following designations. 
\begin{itemize}
\item {\bf I-FBS-OPT.} I-FBS with our locally optimal parameters derived in Section \ref{sec:paramsfist}, given in (\ref{eq:fistamo}), with $\lambda=1/L$. Recall that this choice optimizes the local convergence rate. Note that this is not a practically implementable algorithm as it depends on the optimal momentum. We include it to verify the theory of Section \ref{sec:l1ls}.
\item {\bf ISTA-OPT \cite{Hale:2008}.} ISTA with parameters chosen to optimize the local convergence rate. Corresponds to I-FBS with $\alpha_k=0$ and  step-size $\lambda_k=2/(L+l_E)$. As with I-FBS-OPT this step-size cannot be used in practice as it depends on $l_E$. Using $\lambda_k=1/L$ is common in practice but gives worse convergence rate. 
\item {\bf FISTA-BT \cite{Beck:2009}}. FISTA with Beck and Teboulle's parameters given in (\ref{eq:fista_mo2}).
%\item {\bf FISTA-CD \cite{chambolle2014weak}.} FISTA with Chambolle and Dossal's parameters given in (\ref{eq:fista_mo3}) with $a=5$. (Smaller values of $a$ produce similar performance to FISTA-BT).
\item {\bf FISTA-AdOPT.} Our proposed method, see Section \ref{sec:adamo}. FISTA with Beck and Teboulle's choice until $F(x^k)>F(x^{k-1})$, (equivalently  $(y^{k+1}-x^{k+1})^T(x^{k+1}-x^k)>0$), I-FBS-OPT for all iterations thereafter. We estimate the locally optimal momentum by using $\supp(x^k)$ as a surrogate for $E$, since $\supp(x^k)\subset E$ for $k$ large enough. We compute the smallest eigenvalue of $A_{\supp(x^k)}^TA_{\supp(x^k)}$ one time when $F(x^k)>F(x^{k-1})$ and compute the locally optimal momentum using (\ref{eq:fistamo}). We use this fixed value of the momentum from then on. This algorithm is practically implementable so long as $|\supp(x^k)|$ is not too large. % using our locally optimal momentum defined in (\ref{eq:fistamo}) with $\lambda=1/L$. 
\item {\bf FISTA-AdRe \cite{o2012adaptive}.} Adaptive momentum restart for FISTA. Beck and Teboulle's parameter choice given in (\ref{eq:fista_mo2}) except set $t_k$ to $0$ whenever $F(x^k)>F(x^{k-1})$ (equivalently  $(y^{k+1}-x^{k+1})^T(x^{k+1}-x^k)>0$). Our analysis shows this method achieves the optimal asymptotic iteration complexity derived in Corollary \ref{cor:q_lin} (See the remarks after Corollary \ref{cor:FISTAforL1LS}).
\end{itemize}
All algorithms are initialized to $x^1=x^{0}=0$.

{\bf Experiment Details.} We create a random instance of Problem $\ell_1$-LS, with $A$ of size $300\times 2000$ and with entries drawn i.i.d. from $\mathcal{N}(0,0.01)$. The vector $b$ is $Ax_0$ with $x_0$ being $50$-sparse having non-zero entries drawn i.i.d. $\mathcal{N}(0,1)$. The regularization parameter $\rho$ is set to $1$. %As was discussed in Section \ref{sec:adamo}, it can take many iterations for the optimal manifold to be identified. %In this experiment this means that FISTA-BT outperforms FISTA-OPT in the transient period before the optimal manifold is identified, since it guarantees $O(1/k^2)$ convergence during this phase. In this experiment we test the effectiveness of the adaptive scheme we proposed in Section \ref{sec:adamo} to combat this and compare its performance to a similar adaptive restart scheme proposed in \cite{o2012adaptive}. New algorithms considered in this experiment include:

The results are shown in Fig. \ref{fig:Exp4}. Both FISTA-AdOPT and FISTA-AdRe inherit the $O(1/k^2)$ convergence rate of FISTA-BT during the transient period. However they also have the optimal asymptotic rate of I-FBS-OPT. FISTA-BT begins to oscillate once the optimal manifold is identified and the iteration complexity is worse than I-FBS-OPT, as predicted in Section \ref{sec:underD}. The performances of FISTA-AdOPT and FISTA-AdRe are similar. FISTA-AdOPT has the advantage that it will not oscillate like FISTA-AdRe, which has to be continually reset once the momentum exceeds the optimal value. However FISTA-AdOPT requires computation of an estimate of the locally optimal momentum which depends on smallest eigenvalue of $A$ restricted to the support. %Once the algorithm has converged to the optimal manifold, higher order techniques could be used directly on the smooth function $\phi$ at the cost of higher computational cost per iteration.

\begin{figure}[!h]
\centering
\includegraphics[width=4in]{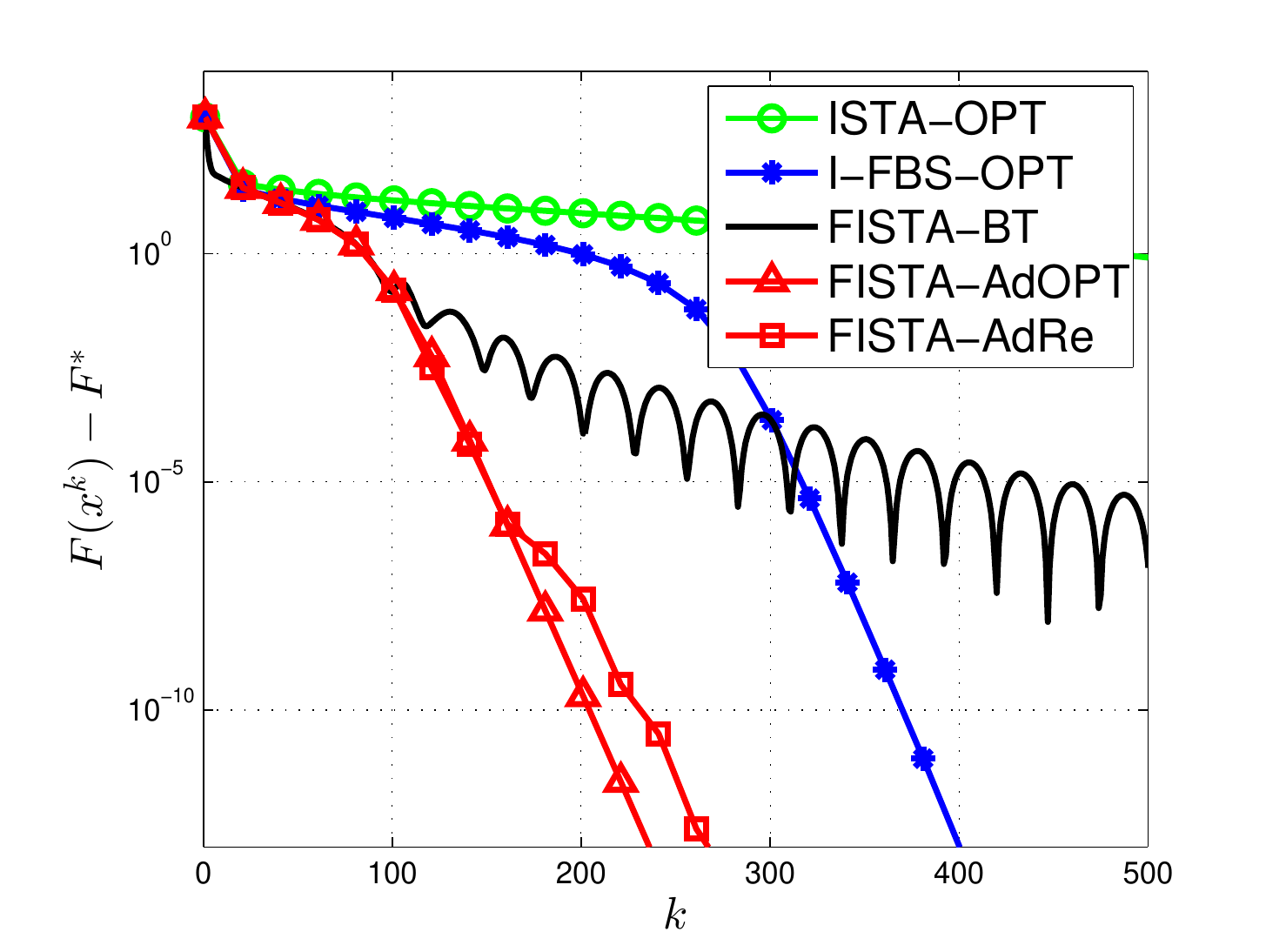}
% where an .eps filename suffix will be assumed under latex, 
% and a .pdf suffix will be assumed for pdflatex; or what has been declared
% via \DeclareGraphicsExtensions.
\caption{Experiment results: showing $F(x^k)-F^*$ versus iteration $k$ for several algorithms.}
\label{fig:Exp4}
\end{figure}

\section{Conclusions}
In this paper, we applied a Lyapunov analysis to a family of inertial forward-backward splitting methods for convex composite minimization. We have proved weak convergence under the following broad conditions with the standard assumptions on the objective function: for the momentum parameter, $0\leq\alpha_k\leq 1$ for all $k$ and $\lim\sup \alpha_k<1$, and for the step-size, non-decreasing and $0<\lambda_k\leq 1/L$. These conditions are more general than the specific sequences studied in \cite{Beck:2009}, \cite{tseng2008accelerated} and \cite{chambolle2014weak} and less restrictive than the conditions derived in \cite{lorenz2014inertial}. We considered in detail the behavior of I-FBS applied to sparse optimization problems. With the aid of some results from the Lyapunov analysis we were able to show that I-FBS achieves local linear convergence for these problems, with finite convergence of certain quantities. The local linear convergence results also apply to the FISTA-like methods of \cite{Beck:2009}, \cite{tseng2008accelerated} and \cite{chambolle2014weak}.  

An interesting direction of future research is to see if this local linear convergence behavior holds for a more general class of problems satisfying certain properties such as partial smoothness and local strong convexity, as considered in \cite{liang2014local} for the forward-backward algorithm. %Finally, the locally optimal momentum depends on the smallest eigenvalue of $H_{EE}$. While in principle this quantity can be estimated on the fly, it would be interesting to develop strategies for adjusting the momentum adaptively which avoid this computation.

{\bf Acknowledgments.} We would like to thank Olgica Milenkovic, Angelia Nedi\'{c}, Amin Emad, Antonin Chambolle and Charles Dossal for helpful discussions. 

%\thispagestyle{plain}
%\markboth{TEX PRODUCTION}{USING SIAM'S \LaTeX\ MACROS}

%%%
%%% When working at Beckman
%\bibliographystyle{C:/Users/prjohns2.UOFI/Dropbox/ResearchF14on/ICASSP14/IEEEbib}
%\bibliography{C:/Users/prjohns2.UOFI/Dropbox/ResearchF14on/ICASSP14/refs}
%%%
%%%

%%%
%%%When working on home laptop
%\bibliographystyle{C:/Users/Pat/Dropbox/ResearchF14on/ICASSP14/IEEEbib}
%\bibliography{C:/Users/Pat/Dropbox/ResearchF14on/ICASSP14/refs}
%%%
%%%

%%% For upload to Arxiv
\bibliographystyle{IEEEbib}
\bibliography{refs}
%%%

\end{document}